\newtheorem{df}{\bf Definition}[section]
\newtheorem{thm}[df]{\bf Theorem}
\newtheorem{lem}[df]{\bf Lemma}
\newtheorem{prop}[df]{\bf Proposition}
\newtheorem{notation}[df]{\bf Notation}
\newtheorem*{claim}{\bf Claim}
\newtheorem{thmA}{\bf Theorem}
\newtheorem{corA}[thmA]{\bf Corollary}
\newtheorem{propA}[thmA]{\bf Proposition}
\newcommand{\R}{\mathbb{R}}
\newcommand{\C}{\mathbb{C}}
\newcommand{\Z}{\mathbb{Z}}
\newcommand{\N}{\mathbb{N}}
\newcommand{\B}{\mathbb{B}}
\newcommand{\K}{\mathbb{K}}
\newcommand{\M}{\mathbb{M}}
\newcommand{\frakS}{\mathfrak{S}}
\newcommand{\cK}{\mathcal{K}}
\newcommand{\cF}{\mathcal{F}}
\newcommand{\cC}{\mathcal{C}}
\newcommand{\cI}{\mathcal{I}}
\newcommand{\cW}{\mathcal{W}}
\newcommand{\act}{\curvearrowright}
\newcommand{\ri}{\mathrm{i}}
\newcommand{\sym}{\mathrm{sym}}
\newcommand{\full}{\mathrm{full}}
\newcommand{\anti}{\mathrm{anti}}
\newcommand{\Ber}{\mathrm{Ber}}
\newcommand{\supp}{\mathrm{supp}}
\newcommand{\sfX}{\mathsf{X}_\full}
\newcommand{\sfZ}{\mathsf{X}_\anti}
\newcommand{\sfW}{\mathsf{X}_\Ber}
\newcommand{\Ad}{\operatorname{Ad}}
\newcommand{\Prob}{\operatorname{Prob}}
\newcommand{\id}{\text{\rm id}}
\newcommand{\Aut}{\operatorname{Aut}}
\newcommand{\otm}{\otimes_{\rm min}}
\newcommand{\ota}{\otimes_{\rm alg}}
\title{\bf Boundary and rigidity of nonsingular \\Bernoulli actions}
\author{Kei Hasegawa
\and Yusuke Isono\thanks{Research Institute for Mathematical Sciences, Kyoto University, 606-8502, Kyoto, Japan \protect \\  E-mail: \texttt{isono@kurims.kyoto-u.ac.jp} \protect \\  
YI is supported by JSPS KAKENHI Grant Number 20K14324.}
\and Tomohiro Kanda}
\date{}
\begin{document}
\maketitle

\begin{abstract}
	Let $ G $ be a countable discrete group and consider a nonsingular Bernoulli shift action  $ G  \act \prod_{g\in  G }(\{0,1\},\mu_g)$ with two base points. When $ G $ is exact, under a certain finiteness assumption on the measures $\{\mu_g\}_{g\in  G }$, we construct a boundary for the Bernoulli crossed product C$^*$-algebra that admits some commutativity and amenability in the sense of Ozawa's bi-exactness. As a consequence, we obtain that any such Bernoulli action  is solid. This generalizes solidity of measure preserving Bernoulli actions by Ozawa and Chifan--Ioana, and is the first rigidity result in the non measure preserving case. For the proof, we use anti-symmetric Fock spaces and left creation operators to construct the boundary and therefore the assumption of having two base points is crucial.
\end{abstract}

\section{Introduction and main results}

Rigidity in ergodic theory and von Neumann algebra theory has been intensively studied in the last two decades. The most well studied class is the \textit{Bernoulli shift actions}: actions of countable discrete groups $ G $ on product spaces $\prod_{ G }(X_0,\mu_0)$ given by $(g\cdot x)_h = x_{g^{-1}\cdot h}$. It provides a wide range of rigidity phenomenon and is regarded as a very useful source in the theories. 
A typical rigidity of the Bernoulli actions is \textit{solidity} \cite{Oz04,CI08}. This means, if we denote by $\mathcal R$ the associated orbit equivalence relation of a given Bernoulli action, then for any subequivalence relation $\mathcal S \subset \mathcal R$, there exists a partition $\{X_n\}_{n\geq 0}$ of the product space into $\mathcal S$-invariant measurable sets such that $\mathcal S|_{X_0}$ is hyperfinite and  $\mathcal S|_{X_n}$ is strongly ergodic for all $n\geq 1$. In particular any ergodic non-hyperfinite subequvalence relation of $\mathcal R$ is strongly ergodic. When $ G $ is non-amenable, this strengthens  strong ergodicity of the Bernoulli action and is rigidity for subequivalence relations.

By simply exchanging the probability measure $\mu_0$ with a family $\{\mu_g\}_{g\in  G }$, combining with Kakutani's criterion for infinite product measures \cite{Ka48}, one can define a \textit{nonsingular Bernoulli action} $ G  \act \prod_{g\in  G } (X_0,\mu_g)$. 
It has been studied for years when $ G  =\Z$ and many interesting results on ergodicity and type classifications were obtained, see for example \cite{Kr70,Ha81,Ko09,BKV19}. Very recently, the study of the general case was initiated by Vaes and Wahl \cite{VW17}. It was proved in \cite{VW17,BKV19} that a group $ G $ admits an ergodic nonsingular Bernoulli action of type III if and only if $ G $ has a (nontrivial) cocycle for the left regular representation. Therefore the existence of type III Bernoulli actions for a given group is closely related to \textit{geometry} of the group. From this point of view, it is a natural question to ask if such a type III Bernoulli action admits some rigidity aspects when $ G $ has a nice geometric property (in the sense of cocycles).

The \textit{pmp} (probability measure preserving) Bernoulli actions has been studied in two independent ways: one is by \textit{Popa's deformation/rigidity theory} (see  surveys \cite{Po06,Va10,Io17}) and the other is by \textit{Ozawa's boundary amenability} \cite{Oz06,BO08}. For this reason, the aforementioned solidity of the pmp Bernoulli actions has two independent proofs by Chifan--Ioana \cite{CI08} and by Ozawa \cite{Oz04}. 
Both proofs, however, heavily depend on the measure preserving condition and seem to be not working for the non-pmp case. 
Further, all known examples of solid actions (see \cite{Oz08,Ma16}) depend on similar ideas which use the measure preserving condition. 
Thus solidity of non-pmp Bernoulli actions is still open because of this technical issue.

In the present paper, we deal with this problem. In our main theorem below, we prove solidity for a wide class of nonsingular and non-pmp Bernoulli actions, which is the first rigidity results for non-pmp Bernoulli actions. 

\begin{thmA}\label{thmA}
	Let $ G $ be a countable discrete group and consider a product measure space with two base points
	$$ (\Omega , \mu):=\prod_{g\in G} (\{0,1\}, p_g\delta_0 + q_g \delta_1)\quad \text{where $p_g\in (0,1)$ and $p_g+q_g=1$ for all $g\in G$}. $$
Assume that $(\Omega,\mu)$ has no atoms and satisfies Kakutani's condition, so that the nonsingular Bernoulli action $ G  \act (\Omega,\mu)$ is defined. Assume further that
\begin{itemize}
	\item[$\rm (i)$] $ G $ is exact;
	\item[$\rm (ii)$] for any $g\in  G $, $p_h = p_{gh}$ for all but finitely many $h\in G$. 
\end{itemize}
Then the Bernoulli action $ G  \act (\Omega,\mu)$ is solid.
\end{thmA}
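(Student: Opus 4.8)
The plan is to prove solidity through Ozawa's bi-exactness framework, realized on an anti-symmetric Fock space. First I would identify the Koopman Hilbert space $L^2(\Omega,\mu)$ with the fermionic Fock space $\Lambda\ell^2(G)$: since each factor $L^2(\{0,1\},\mu_g)$ is two-dimensional, spanned by the vacuum (the constant function) and a unit vector $\xi_g$ orthogonal to it, the infinite tensor product decomposes so that a finite subset $F\subset G$ indexes the elementary vector $\bigwedge_{g\in F}\delta_g$. Under this identification the Koopman representation $\kappa$ of the Bernoulli action acts by the second quantization of the left regular representation, twisted by the scalar Radon--Nikodym corrections coming from the family $\{\mu_g\}$; concretely $\kappa_g=D_g\,\Gamma(\lambda_g)$ for a positive multiplication operator $D_g$.

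Next I would build the boundary C$^*$-algebra $\mathbb{B}\subset B(\Lambda\ell^2(G))$ generated by $L^\infty(\Omega)$ acting diagonally, the left creation operators $c(\delta_g)$, and the Koopman unitaries; this contains the crossed product $M=L^\infty(\Omega)\rtimes G$. The CAR relations make $\mathbb{B}$ a tractable Toeplitz/Cuntz-type extension of a commutative algebra by the compacts $\mathcal{K}(\Lambda\ell^2(G))$. The structural heart of the argument is that, modulo $\mathcal{K}$, the left creation operators commute with the right (opposite) action, so that the quotient $\mathbb{B}/\mathcal{K}$ carries compatible, essentially commuting left and right $G$-actions; this realizes the ``small at infinity'' commutativity, which is exactly the condition (AO)-type requirement of bi-exactness.

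The two hypotheses then play distinct roles. Exactness of $G$ (hypothesis (i)) yields that the left--right $G\times G$-action on the abelian boundary $\mathbb{B}/\mathcal{K}$ is topologically amenable, providing the amenability half of bi-exactness. Hypothesis (ii) is what forces the relevant commutators into $\mathcal{K}$: since $p_h=p_{gh}$ for all but finitely many $h$, the Radon--Nikodym cocycle of each $g$ is supported on finitely many coordinates, so $D_g$ deviates from a scalar on only finitely many tensor factors and $\kappa_g$ differs from the bare second-quantized unitary $\Gamma(\lambda_g)$ by a finite-rank-type correction. I expect this compactness verification to be the principal obstacle: in the measure-preserving case $\kappa_g=\Gamma(\lambda_g)$ is an honest unitary and the estimates are clean, whereas here one must control the non-unitary scaling $D_g$ and confirm that the commutators $[\,\mathbb{B}\,,\,JMJ\,]$ land in $\mathcal{K}$ up to norm-small errors. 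The anti-symmetric two-point structure is precisely what converts hypothesis (ii) into finite-rankness, explaining why two base points are indispensable.

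Finally, with bi-exactness of the inclusion $M\hookrightarrow\mathbb{B}$ in hand, I would invoke the general solidity criterion (Ozawa's argument in the type III form suited to crossed products) to conclude that every diffuse subalgebra of $M$ has amenable relative commutant. Passing through the standard correspondence between intermediate von Neumann algebras containing the Cartan $L^\infty(\Omega)$ and subequivalence relations $\mathcal{S}\subset\mathcal{R}$ --- under which hyperfiniteness matches amenability and strong ergodicity matches the absence of nontrivial central sequences --- then yields the desired partition and hence solidity of $\mathcal{R}$.
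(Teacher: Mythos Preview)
Your overall architecture matches the paper's: anti-symmetric Fock space, a commutative boundary carrying an amenable $G$-action (from exactness), hypothesis (ii) used to control the Radon--Nikodym corrections, and finally Ozawa's condition (AO) argument. However, the identity $\kappa_g = D_g\,\Gamma(\lambda_g)$ with $D_g$ a multiplication operator is where the plan breaks. The Radon--Nikodym factor $\sqrt{d(g_*\mu)/d\mu}$, viewed in $L^\infty(\Omega)$, is nontrivial on the coordinates in $\supp(g)$; under the Fock identification it \emph{creates and annihilates} particles at those sites, so the Koopman unitary does not preserve particle number and cannot be written as a diagonal operator times $\Gamma(\lambda_g)$. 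You correctly flag this step as the principal obstacle, but the resolution is not ``finite-rank-type correction'': what the paper proves (its Key Lemma) is a decomposition $U_g^\alpha = \sum_{i=1}^n X_i\,\pi_g^\anti\, Y_i$ with $X_i,Y_i$ in the C$^*$-algebra generated by left/right creation operators and the diagonal $\ell^\infty$-functions. The proof is a Wick-formula induction together with a case analysis over subsets $F\subset\supp(g)$, handling separately the sites where both $x$ and $Ix$ appear.

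The paper also works on a larger space than you propose: it doubles the index set to $X=G\sqcup IG$, realizes the action on the Araki--Woods factor $\Gamma_\anti(H_\R,U)\supset L^\infty(\Omega)$ acting on $\mathcal{F}_\anti(\ell^2(X))$, and only at the end compresses back to $L^2(\Omega,\mu)$. This CAR/quasi-free framework is what supplies the invertible algebraic elements $Z_F\in\mathrm{C}^*_\anti(H_\R,U)$ needed in the decomposition above. The boundary is the explicit commutative algebra $C(\partial\sfZ)$ of $\ell^\infty$-functions on finite subsets of $X$ that commute with creation operators modulo compacts; amenability of $G\curvearrowright C(\partial\sfZ)$ is obtained from a concrete $G$-equivariant ucp map $\ell^\infty(X)\to C(\partial\sfZ)$ built out of a length function, not by a general bi-exactness principle for $G\times G$.
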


We mention that Theorem \ref{thmA} actually holds for some  \textit{generalized} Bernoulli actions, see Theorem \ref{main thm} and its assumptions (i) and (ii) in the statement.

For the proof of the main theorem, we use Ozawa's boundary argument in a very different context, see the explanation below. 
Assumptions (i) and (ii) are crucial in our proof, although we believe that there are more appropriate (and weaker) assumptions. Assumption (ii) should be understood as a condition for the associated cocycle.

Below we give two concrete examples satisfying the assumptions in Theorem \ref{thmA}. Recall that an essentially free action $ G  \act (X,\mu)$ is \textit{amenable in the sense of Zimmer} if the associated orbit equivalence relation is hyperfinite. 

\subsection*{Example 1: Groups with more than one end}

Recall that a countable discrete group $ G $ has \textit{more than one end} if it has a (nontrivial) \textit{almost invariant subset}, that is, there is a subset $W\subset  G $ such that $|W|=|W^c|=\infty$ and $|W\triangle gW|<\infty$ for all $g\in  G $. Any such $W$ gives rise to a cocycle for the left regular representation, which was already used in \cite{BKV19}. Any (nontrivial) free product and any HNN extension (with finite subgroup) have more than one end, see \cite[Remark 6.7]{BKV19}. Combining Theorem \ref{thmA} with (the proof of) \cite[Theorem 6.9.1]{BKV19}, we get the following corollary. 

\begin{corA}\label{corB}
	For any countable non-amenable exact group $ G $ with more than one end, there exists a nonsingular Bernoulli action with two base points such that it is essentially free, ergodic of stable type $\rm III_1$, non-amenable in the sense of Zimmer, and solid. 
\end{corA}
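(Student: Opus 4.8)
The plan is to manufacture the action from the almost-invariant-set construction underlying \cite[Theorem 6.9.1]{BKV19} and then read off solidity from Theorem \ref{thmA}; the only genuinely new point is that this construction already satisfies hypothesis (ii). Since $G$ has more than one end, fix an almost invariant subset $W\subset G$, so that $|W|=|W^c|=\infty$ and $|W\triangle gW|<\infty$ for every $g\in G$. Fix parameters $\alpha,\beta\in(0,1)$ with $\alpha\neq\beta$ and set $p_g:=\alpha$ for $g\in W$ and $p_g:=\beta$ for $g\in W^c$, with $q_g:=1-p_g$. Because $p$ takes only two values and, as computed below, agrees with each of its translates off a finite set, Kakutani's condition holds automatically; and since $1-\max(p_g,q_g)$ is bounded below on each of the two infinite pieces $W$ and $W^c$, the product measure $\mu$ is non-atomic. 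Thus $G\act(\Omega,\mu)$ is a well-defined nonsingular Bernoulli action with two base points.

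\textbf{Dynamical properties.} For these I would invoke (the proof of) \cite[Theorem 6.9.1]{BKV19}. The Radon--Nikodym cocycle of $g$ is a finite product, indexed by the boundary set $W\triangle g^{-1}W$, of the elementary ratios $\alpha/\beta$ and $(1-\alpha)/(1-\beta)$; choosing $\alpha,\beta$ generically so that $\log(\alpha/\beta)$ and $\log((1-\alpha)/(1-\beta))$ are rationally independent, the associated flow is trivial and the action is ergodic of stable type $\mathrm{III}_1$. Essential freeness is the standard essential freeness of nontrivial Bernoulli shifts, and non-amenability in the sense of Zimmer, i.e. non-hyperfiniteness of the orbit equivalence relation $\mathcal R$, is part of the conclusion of \cite[Theorem 6.9.1]{BKV19} for the non-amenable group $G$ (this also follows once one knows $\mathcal R$ is strongly ergodic, since a strongly ergodic relation is never hyperfinite).

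\textbf{Verifying (ii) and concluding.} The crucial compatibility is hypothesis (ii) of Theorem \ref{thmA}. For $g\in G$ we have $p_{gh}=\alpha\,\mathbf{1}_{g^{-1}W}(h)+\beta\,\mathbf{1}_{(g^{-1}W)^c}(h)$, since $gh\in W$ iff $h\in g^{-1}W$; hence $p_h\neq p_{gh}$ exactly when $h\in W\triangle g^{-1}W$. Applying almost invariance to $g^{-1}$ gives $|W\triangle g^{-1}W|<\infty$, so $p_h=p_{gh}$ for all but finitely many $h$ and (ii) holds. As $G$ is exact, hypothesis (i) holds as well, and Theorem \ref{thmA} applies to give that $G\act(\Omega,\mu)$ is solid.

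\textbf{Main obstacle.} The real tension lies inside the construction itself: hypothesis (ii) forces the parameters $\{p_g\}$ to be cofinitely $G$-invariant, hence essentially two-valued and rigidly tied to $W$, whereas stable type $\mathrm{III}_1$ demands enough non-invariance to trivialize the associated flow. That a single almost invariant set can be made to serve both purposes at once -- producing a dense ratio set and an ergodic Maharam extension while keeping the translates of $p$ cofinitely equal -- is exactly the input I would import from \cite[Theorem 6.9.1]{BKV19}. Relative to that, the new contributions here are only the verification of (ii) and the resulting appeal to Theorem \ref{thmA}.
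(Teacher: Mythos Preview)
Your proposal is correct and follows precisely the route the paper indicates: take the two-valued construction from (the proof of) \cite[Theorem 6.9.1]{BKV19} based on an almost invariant subset $W$, import essential freeness, stable type $\mathrm{III}_1$, and non-amenability from there, and then observe that almost invariance of $W$ is exactly what makes hypothesis (ii) of Theorem~\ref{thmA} hold, so solidity follows. The paper compresses all of this into the single sentence ``Combining Theorem~\ref{thmA} with (the proof of) \cite[Theorem 6.9.1]{BKV19}'', and your unpacking of it---including the verification that $\{h:p_h\neq p_{gh}\}=W\triangle g^{-1}W$ is finite---is exactly what that sentence means.
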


\subsection*{Example 2: Groups acting on trees}

	Let $\mathcal T$ be a (locally finite) tree and we denote by $E(\mathcal T)$ the set of all the edges. Let $ G  \leq \Aut(\mathcal T)$ be a countable discrete subgroup such that $ G $ is non-amenable and $\mathcal T/ G $ is a finite set. Then following \cite[Section 10]{AIM19}, for any fixed $p\in (0,1)$, there is a probability measure $\mu_p$ on the set $\Omega$ of all possible orientations of $E(\mathcal T)$. The natural $ G $-action on $\Omega$ induces a nonsingular action whose ergodic properties were studied in \cite[Theorem 10.5]{AIM19}. It is straightforward to see that the action is essentially free if $\mathcal T$ has no leaves (here a \textit{leaf} means a vertex that has exactly one edge). As mentioned in  \cite[Remark 10.1]{AIM19}, up to exchanging $ G $ with an index 2 subgroup $ G ^+$ if necessary, one can identify $ G ^+ \act (\Omega,\mu_p)$ with a nonsingular (generalized) Bernoulli action. 
Since it satisfies all the assumptions in Theorem \ref{thmA} (see assumptions (i) and (ii) in Theorem \ref{main thm}), the action is solid. Since $ G ^+$ has index 2 in $ G $, this implies solidity of the $ G $-action. We get the following corollary. (For the non-amenability part below, see the proof of \cite[Theorem 6.9.1]{BKV19} and use \cite[Proposition 5.3]{VW17} in a similar way.)

\begin{corA}\label{corC}
	Keep the setting. For any $p\in (0,1)$ which is sufficiently close to $1/2$ (but $p\neq 1/2$), the action $ G  \act (\Omega,\mu_p)$ is essentially free, ergodic of type $\rm III_\lambda$, where $\lambda=\min\{\frac{1-p}{p},\frac{p}{1-p}\}$, non-amenable in the sense of Zimmer, and solid. 
\end{corA}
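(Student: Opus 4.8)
The plan is to verify the four asserted properties in turn, treating solidity---the genuinely new point---last. Essential freeness holds under the standing no-leaves hypothesis; as indicated in the setup preceding the corollary, it is straightforward from the explicit description of $\mu_p$ and may be read off from \cite[Section 10]{AIM19}. The ergodicity and the determination of the type as $\mathrm{III}_\lambda$ with $\lambda=\min\{\tfrac{1-p}{p},\tfrac{p}{1-p}\}$ are exactly \cite[Theorem 10.5]{AIM19}, the hypothesis that $p$ be close to $1/2$ (but $p\neq1/2$) placing us in the type $\mathrm{III}_\lambda$ regime. Non-amenability in the sense of Zimmer---that is, non-hyperfiniteness of the orbit equivalence relation---follows by adapting the cocycle argument of \cite[Theorem 6.9.1]{BKV19} through \cite[Proposition 5.3]{VW17}, using that $G$ is non-amenable.

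For solidity I would first reduce to the index-$2$ subgroup $G^+\le G$ of \cite[Remark 10.1]{AIM19}, under which the orientation action is identified with a nonsingular generalized Bernoulli action $G^+\act(\Omega,\mu_p)$ on the coordinate set $E(\mathcal T)$, and then verify assumptions (i) and (ii) of Theorem \ref{main thm}. For (i), discreteness of $G$ in $\Aut(\mathcal T)$ forces finite vertex stabilizers, so $G$ acts properly on the locally finite tree $\mathcal T$ with finite quotient; by Bass--Serre theory $G$ is virtually free, hence exact, and the finite-index subgroup $G^+$ inherits exactness. For (ii), every coordinate carries the single parameter $p$ while $G^+$ merely permutes the edges $E(\mathcal T)$; since $\mathcal T/G$ is finite and $\mathcal T$ is locally finite, the required cofinite-constancy of the base measures holds automatically.

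With both hypotheses in force, Theorem \ref{main thm} yields solidity of $G^+\act(\Omega,\mu_p)$, and it remains to transfer solidity across the inclusion $\mathcal R^+\subset\mathcal R$ of the orbit equivalence relations of $G^+$ and $G$. Since $G^+\trianglelefteq G$ has index $2$, the subrelation $\mathcal R^+$ is normal of index $2$ in $\mathcal R$, whence $\mathcal S^+:=\mathcal S\cap\mathcal R^+$ is normal of index at most $2$ in any subequivalence relation $\mathcal S\subset\mathcal R$. Solidity of $G^+$ supplies an $\mathcal S^+$-invariant partition $\{Y_n\}_{n\ge0}$ with $\mathcal S^+|_{Y_0}$ hyperfinite and $\mathcal S^+|_{Y_n}$ strongly ergodic for $n\ge1$. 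Normality means the nontrivial class of $\mathcal S/\mathcal S^+$ is implemented by a partial isomorphism $\psi$ normalizing $\mathcal S^+$; since hyperfiniteness and strong ergodicity are isomorphism invariants of equivalence relations, $\psi$ must preserve $Y_0$ and permute $\{Y_n\}_{n\ge1}$. Taking $X_0:=Y_0$ and letting $\{X_m\}_{m\ge1}$ be the unions of the $\psi$-orbits among the $Y_n$ produces the $\mathcal S$-invariant partition required for solidity.

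The main obstacle is precisely this transfer step. Its crux---that $\psi$ cannot glue the hyperfinite piece $Y_0$ to any strongly ergodic piece---rests on normality together with the invariance of the hyperfinite/strongly-ergodic dichotomy under isomorphism, and making it rigorous requires the standard but nontrivial stability of hyperfiniteness under index-$2$ extensions (giving $\mathcal S|_{X_0}$ hyperfinite) as well as a short almost-invariant-sequence argument showing each $\mathcal S|_{X_m}$ remains strongly ergodic. Once this is in place, solidity of $G\act(\Omega,\mu_p)$ follows, completing the proof.
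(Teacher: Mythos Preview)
Your overall route matches the paper's exactly: quote \cite{AIM19} for freeness, ergodicity, and the type computation; quote \cite{BKV19} and \cite{VW17} for non-amenability; pass to the index-$2$ subgroup $G^+$, apply Theorem~\ref{main thm}, and then transfer solidity back up to $G$. Your verification of hypothesis~(i) via virtual freeness is correct, and your index-$2$ transfer of solidity is considerably more detailed than the paper's bare assertion (``Since $G^+$ has index $2$ in $G$, this implies solidity of the $G$-action'') and is essentially sound.

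The genuine gap is your verification of (ii). You assert that ``every coordinate carries the single parameter $p$ while $G^+$ merely permutes the edges'', which would make the $G^+$-Bernoulli shift \emph{probability-measure-preserving}; but this is incompatible with the $G$-action being nonsingular of type $\mathrm{III}_\lambda$. Indeed, write the $G$-action on $\{0,1\}^{E(\mathcal T)}$ in such a reference as $g\cdot\omega=\sigma_g(\omega)+c_g$ with flip cocycle $c\colon G\to(\Z/2\Z)^{E(\mathcal T)}$. If $c|_{G^+}=0$ then the cocycle identity forces $c_{g_0}$ (for any $g_0\in G\setminus G^+$) to be a $G^+$-invariant subset of $E(\mathcal T)$, while nonsingularity of $g_0$ together with $p\neq 1/2$ and Kakutani's criterion forces $c_{g_0}$ to be finite. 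Since every $G^+$-orbit on $E(\mathcal T)$ is infinite (finite edge stabilizers, $G^+$ infinite), this gives $c_{g_0}=\emptyset$, so the whole $G$-action would be pmp---contradicting type $\mathrm{III}$. In the actual identification of \cite[Remark~10.1]{AIM19} one uses a $G^+$-fixed reference orientation (the bipartite one), under which the base probabilities $p_e$ take \emph{both} values $p$ and $1-p$; condition~(ii) then holds because $\{e:p_e\neq p_{g\cdot e}\}$ is carried by the finitely many edges on the geodesic $[v_0,gv_0]$ for a fixed base vertex $v_0$. The paper does not spell this out either, simply asserting that (i) and (ii) hold.
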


Theorem \ref{thmA} also has an application to rigidity of associated von Neumann algebras. Recall that a diffuse von Neumann algebra $M$ (with separable predual) is \textit{solid} \cite{Oz03} if for any diffuse von Neumann subalgebra $A\subset M$ which is a range of a faithful normal conditional expectation (say, \textit{with expectation}), $A'\cap M$ is hyperfinite. 
For bi-exactness of groups below, see Subsection \ref{Bi-exactness and condition (AO)}. We obtain new examples of solid type III factors.

\begin{corA}\label{corD}
	Let $ G  \act (\Omega,\mu)$ be an action given in Theorem \ref{thmA}, Corollary \ref{corB} or Corollary \ref{corC}. If $ G $ is bi-exact, then the associated von Neumann algebra $L^\infty(\Omega,\mu)\rtimes  G $ is solid.
\end{corA}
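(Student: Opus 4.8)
Write $M:=L^\infty(\Omega,\mu)\rtimes G$ and $B:=L^\infty(\Omega,\mu)$, and represent $M$ on $\cH:=L^2(\Omega,\mu)\ovt \ell^2 G$. Since $B$ is abelian it is amenable, and since the actions in Theorem \ref{thmA}, Corollary \ref{corB} and Corollary \ref{corC} are essentially free, $B\subset M$ is a Cartan subalgebra. The plan is to verify that the pair $(M,\cH)$ satisfies condition (AO) (in the form appropriate for possibly type III algebras, cf. Subsection \ref{Bi-exactness and condition (AO)}) \emph{relative to the scalars}, and then to invoke the standard deduction from condition (AO) to solidity: once $M\subset\B(\cH)$ admits $\sigma$-weakly dense exact C$^*$-subalgebras $C\subset M$ and $D\subset M'$ whose multiplication map $C\otm D\to \B(\cH)/\K(\cH)$ is min-continuous, every diffuse subalgebra $A\subset M$ with expectation has amenable, hence hyperfinite, relative commutant $A'\cap M$. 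Because $M$ is not tracial, this last deduction must be carried out on the continuous core $M\rtimes_\sigma \R$, where exactness (assumption (i)) and the cocycle finiteness (assumption (ii)) keep the modular flow tame enough for the argument to run; this is precisely where the present non-pmp setting genuinely differs from the pmp case treated in \cite{Oz04,CI08}.

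The two C$^*$-algebras $C$ and $D$ are produced from two independent boundaries. On the $B$-side, the anti-symmetric Fock space boundary constructed for Theorem \ref{thmA} provides a nuclear $G$-C$^*$-algebra together with a $G$-equivariant embedding of $B\rtm G$ that commutes with $M$ at infinity \emph{modulo} $B$; the associated commutators are negligible only up to $B$, so this boundary alone yields bi-exactness of $M$ relative to $B$. On the group side, bi-exactness of $G$ furnishes the right-regular copy $\rho(C^*_\lambda G)\subset M'$ together with the small-at-infinity group boundary, whose commutators with $M$ are negligible modulo $L(G)$, giving bi-exactness of $M$ relative to $L(G)$. Neither relative statement alone suffices in the type III setting: Theorem \ref{thmA} controls only subalgebras containing $B$, while bi-exactness of $G$ does not by itself yield solidity of an arbitrary non-pmp crossed product.

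The decisive step is to \emph{amalgamate} the two boundaries. Forming the fibered product of the Fock boundary and the group boundary, carrying the diagonal $G$-action, yields a single $G$-C$^*$-algebra whose crossed product furnishes $C\subset M$ and $D\subset M'$ as above; the action is amenable since a product of amenable actions is amenable. The key point is that a commutator which is simultaneously negligible in the Bernoulli direction and in the group direction is negligible modulo $B\cap L(G)=\C 1$, that is, modulo $\K(\cH)$ itself. Thus the joint boundary witnesses smallness at infinity relative to the scalars, which is exactly condition (AO) for $(M,\cH)$; feeding this into the solidity criterion completes the proof.

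The main obstacle is this amalgamation together with the type III bookkeeping. One must check that the Fock space boundary of Theorem \ref{thmA} and the boundary of a bi-exact $G$ are compatible as $G$-C$^*$-algebras, so that their fibered product still carries a $G$-action that is both amenable and small at infinity, and that after taking joint commutators the two error ideals collapse to $\K(\cH)$ and not merely to something in $B$ or in $L(G)$. Equally delicate is transporting the final solidity deduction to the nontracial situation: every intertwining and relative-amenability argument must be phrased through faithful normal conditional expectations and, where a trace is unavoidable, lifted to the continuous core, using assumptions (i) and (ii) to guarantee that the core remains exact and that the extended boundary action stays amenable there.
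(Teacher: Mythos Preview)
Your overall architecture---amalgamate the Fock boundary with the bi-exact group boundary to obtain a \emph{single} condition~(AO) relative to $\C$, and then run the standard solidity deduction---is not what the paper does, and the amalgamation step contains a genuine gap.

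The paper does not build a second boundary at all. It uses the Fock boundary together with mere exactness of $G$ to obtain condition~(AO) for $\mathrm{C}^*_\anti(H_\R,U)\rtimes_{\rm red} G$ (Proposition~\ref{prop-AO}), and from this deduces only \emph{relative} solidity: for any diffuse $A\subset p\,\Gamma_\anti(H_\R,U)\,p$ with expectation, $A'\cap p(\Gamma_\anti(H_\R,U)\rtimes G)p$ is amenable (first half of Theorem~\ref{solid-Bernoulli-thm}). Bi-exactness of $G$ then enters through Popa-style intertwining, not through a boundary: by \cite[Theorem~4.1]{HI15b} one has $P'\cap p(M\rtimes G)^\omega p\subset p(M^\omega\rtimes G)p$ whenever $P$ has no amenable summand, and combined with \cite[Theorem~4.3(4)]{HI15a} this forces $A\preceq_{M\rtimes G} M$, which reduces to the relative statement already established. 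Since $L^\infty(\Omega,\mu)\rtimes G$ sits inside $\Gamma_\anti(H_\R,U)\rtimes G$ with expectation, its solidity follows. No continuous-core argument is needed; the type~III bookkeeping is handled by ultraproducts.

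The gap in your proposal is the sentence ``a commutator which is simultaneously negligible in the Bernoulli direction and in the group direction is negligible modulo $B\cap L(G)=\C1$, that is, modulo $\K(\cH)$ itself.'' This is a category error: the ideals $\cK_B$ and $\cK_{L(G)}$ of \emph{relatively} compact operators are defined via Jones projections (or diagonal embeddings), not by the subalgebras themselves, and there is no general reason why $\cK_B\cap\cK_{L(G)}=\K(\cH)$ should follow from $B\cap L(G)=\C$. Concretely, an operator of the form $\sum_g x_g\otimes e_{g,g}$ with each $x_g\in\K(L^2(\Omega,\mu))$ lies in your $\cK_B$-type ideal, but whether it also lies in $\cK_{L(G)}$ or in $\K(\cH)$ depends on the sequence $(x_g)_g$, and intersecting the two relative ideals does not collapse to the honest compacts. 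Your fibered-product boundary may well carry an amenable $G$-action, but you have not shown that the resulting commutators land in $\K(\cH)$; this is the substantive obstruction, and it is precisely why the paper routes bi-exactness through intertwining instead.
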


We note that any free group $\mathbb F_n$ $(2\leq n<\infty)$ is exact, bi-exact, with more than one end, and acting on its Cayley tree. Hence one can easily construct nonsingular (generalized) Bernoulli actions whose von Neumann algebras are solid.

\subsection*{Strategy for the proof}

We explain our strategy for the proof of the main theorem.  \textit{Anti-symmetric Fock spaces} play a crucial role in the proof. Let $ G  \act (\Omega,\mu)$ be an action given in Theorem \ref{thmA} (or Theorem \ref{main thm}).

Firstly, we embed $L^\infty(\Omega,\mu)$ into an Araki--Woods factor (infinite tensor product of 2 by 2 matrices) in a diagonal way. Then we identify the factor with a von Neumann algebra generated by some left creation operators acting on an anti-symmetric Fock space $\cF_\anti$. Consequently the crossed product $L^\infty(\Omega,\mu)\rtimes  G $ acts on $\cF_\anti\otimes \ell^2( G )$. See Subsection \ref{Non-singular Bernoulli actions via the CAR construction} for this first step. 

Secondly, we develop a boundary theory for creation operators on $\cF_\anti$. By assumption (i), we prove amenability of a $ G $-action on the boundary. This part is inspired by Ozawa's proof of solidity of pmp Bernoulli actions \cite{Oz04}. See Section \ref{Boundary amenability on anti-symmetric Fock spaces} for the second step.

Thirdly, we use the boundary to obtain a condition (AO) phenomena for the reduce crossed product C$^*$-algebra $C(\Omega)\rtimes_{\rm red} G $, see Proposition \ref{propE} below. Once we get it, the proof in \cite{Oz04} is applied and we obtain the desired solidity. However to prove Proposition \ref{propE} is the most technical part in the proof and we need lots of computations. This difficulty comes from the fact that the action does not preserve the measure, hence this part is the main step for the proof. We need assumption (ii) in this procedure. See Subsection \ref{How to use the boundary}, \ref{Proof of Theorem A}, and \ref{Proof of key lemma} for this last step.

Below we introduce \textit{condition AO} (see Subsection \ref{Bi-exactness and condition (AO)}) in our setting. This should be compared to  \cite[Proposition 4.3]{Oz04}. The map $J$ in the proposition is the modular conjugation (see Subsection \ref{Standard representations}) and $\{e_{g,g}\}_{g\in  G }$ is the matrix unit in $\B(\ell^2( G ))$.

\begin{propA}\label{propE}
	Keep the setting from Theorem \ref{thmA}. Put $A:=C(\Omega)\rtimes_{\rm red } G $ and 
	$$ \cK:=\left\{\sum_{g\in  G }x_g\otimes e_{g,g}\in \B(L^2(\Omega,\mu) \otimes \ell^2( G )) \;\middle|\; x_g\in \K(L^2(\Omega,\mu)) \ \text{for all }g\in  G  \right\} .$$
We denote by $\mathrm{M}(\cK)$ the multiplier algebra of $\cK$. 
Then the natural $\ast$-homomorphism 
\begin{align*}
	\nu\colon A \ota JAJ &\to \mathrm{M}(\cK)/\cK,\quad \nu(x\otimes JyJ) = x JyJ + \cK
\end{align*}
is well defined and bounded with respect to the minimal tensor norm.
\end{propA}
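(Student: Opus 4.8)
The plan is to verify two separate things: first, that $\nu$ is a well-defined $\ast$-homomorphism out of the algebraic tensor product $A \ota JAJ$, and second, that it is bounded for the minimal tensor norm. The first reduces to showing that the ranges of the two legs commute modulo $\cK$, that is, $[A, JAJ] \subseteq \cK$; granting this, the bilinear map $(x, JyJ) \mapsto xJyJ + \cK$ factors through $A \ota JAJ$ as a $\ast$-homomorphism into $\mathrm{M}(\cK)/\cK$ by the universal property of the algebraic tensor product. The second I would deduce from a nuclearity-plus-commutation argument, using the boundary amenability established in Section~\ref{Boundary amenability on anti-symmetric Fock spaces}.

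First I would reduce $[A, JAJ] \subseteq \cK$ to generators. Writing $A$ as generated by $C(\Omega)$ (realized through the left creation operators on $\cF_\anti$) together with the translation unitaries $\{u_g\}_{g\in G}$, and $JAJ$ by $JC(\Omega)J$ together with $\{Ju_gJ\}_{g\in G}$, it suffices to control the commutators $[C(\Omega), JC(\Omega)J]$, $[u_g, JC(\Omega)J]$ and $[C(\Omega), Ju_gJ]$ modulo $\cK$, since $u_g$ and $Ju_hJ$ commute on $\ell^2(G)$ on the nose. Here the CAR structure is decisive: the left and right creation/annihilation operators on $\cF_\anti$ satisfy near-commutation relations whose defects are rank-one corrections, and the block-diagonal form $\sum_g x_g \otimes e_{g,g}$ with $x_g \in \K(L^2(\Omega,\mu))$ is exactly what survives inside $\cK$. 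This is where the non measure-preserving nature forces genuine work and is, I expect, the \emph{main obstacle}: because the measure is not invariant, the left operators carry modular weights that differ from those of their $J$-conjugates under the shift, and the raw discrepancy between them is an infinite product. Assumption~(ii), namely that for each $g$ one has $p_h = p_{gh}$ for all but finitely many $h$, is precisely what collapses this discrepancy to a finite, hence compact, correction and thereby places the commutators in $\cK$.

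For the minimal-tensor-norm bound I would invoke the boundary. Let $\mathbb{D} \supseteq C(\Omega)$ be the unital $G$-C$^*$-algebra furnished by Section~\ref{Boundary amenability on anti-symmetric Fock spaces}, on which the $G$-action is amenable, so that $\mathbb{D} \rtm G$ is nuclear. The left action on $\cF_\anti \otimes \ell^2(G)$, post-composed with the quotient $\mathrm{M}(\cK) \to \mathrm{M}(\cK)/\cK$, should extend to a $\ast$-representation $\rho \colon \mathbb{D} \rtm G \to \mathrm{M}(\cK)/\cK$ restricting on $A$ to the left leg of $\nu$; the content here is that, modulo $\cK$, the image of $C(\Omega)$ agrees with that of the boundary, which is again a consequence of the Fock-space estimates of the previous step, the boundary elements being small at infinity relative to $C(\Omega)$. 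Since $\mathbb{D} \rtm G$ is nuclear, $\rho$ is a nuclear map, and hence so is its restriction $\pi \colon A \to \mathrm{M}(\cK)/\cK$. I would then apply the standard fact that if $\pi \colon A \to D$ is nuclear and $\sigma \colon JAJ \to D$ has range commuting with $\pi(A)$, then $x \otimes JyJ \mapsto \pi(x)\sigma(JyJ)$ extends from $A \ota JAJ$ to $A \otm JAJ$. Taking $D = \mathrm{M}(\cK)/\cK$ and $\sigma$ the right leg, this yields exactly the asserted boundedness of $\nu$ for the minimal tensor norm.
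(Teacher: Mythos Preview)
Your outline has the right high-level shape (commuting ranges plus nuclearity from an amenable boundary), but it misplaces the actual difficulty and therefore leaves the key step unargued.

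First, the well-definedness of $\nu$ is a non-issue: $A \subset L^\infty(\Omega)\rtimes G$ and $JAJ \subset (L^\infty(\Omega)\rtimes G)'$ act on the \emph{standard} representation $L^2(\Omega)\otimes\ell^2(G)$, so $[A,JAJ]=0$ exactly, not merely modulo $\cK$. Assumption~(ii) is not used here, and the paragraph you devote to compact commutator corrections is unnecessary.

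Second, and more seriously, the boundary $C(\partial\sfZ)$ from Section~\ref{Boundary amenability on anti-symmetric Fock spaces} does \emph{not} contain $C(\Omega)$, and there is no sense in which ``the image of $C(\Omega)$ agrees with that of the boundary modulo $\cK$.'' The paper instead forms the triple $C(\partial\sfZ)\otm B\otm (B_r\rtimes_{\rm red}G)$ with $B=\mathrm{C}^*_\anti(H_\R,U)$, maps it to $\mathrm{M}(\cK)/\cK$ (min-bounded because $B$ and $C(\partial\sfZ)$ are nuclear), and then takes the crossed product by $G$ using amenability of $G\act C(\partial\sfZ)$. The genuine obstacle is proving that this last map is $G$-\emph{equivariant}: $G$ acts on the target via $\Ad(U_g^\alpha\otimes\lambda_g)$, and in the non-pmp case the standard implementation $U_g^\alpha$ is \emph{not} $\pi_g^\anti$. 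One must show that $\Ad(U_g^\alpha)$ nonetheless induces the boundary action $\beta_g$ on $C(\partial\sfZ)$ modulo compacts. This is exactly Lemma~\ref{key lemma}, which under assumption~(ii) writes $U_g^\alpha=\sum_i X_i\,\pi_g^\anti\,Y_i$ with $X_i,Y_i$ in the C$^*$-algebra generated by creations and $\ell^\infty(\sfZ)$; that is where the finiteness of $\supp(g)$ is actually spent. Your proposal does not identify this step, so as it stands the construction of your map $\rho\colon \mathbb{D}\rtm G\to\mathrm{M}(\cK)/\cK$ has no argument behind it.
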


Before finishing the introduction, we mention the following two remarks. Firstly, we can actually construct a boundary which acts on $L^2(\Omega,\mu)$ (rather than $\cF_\anti$). In this case, we can forget all information of $\cF_\anti$, see Subsection \ref{Boundary for Bernoulli actions}. 
Secondly, Proposition \ref{propE} actually holds at the level of the Araki--Woods factor, see Proposition \ref{prop-AO}. In particular, in Theorem \ref{solid-Bernoulli-thm}, we get examples of solid actions on Araki--Woods factors. For this, we prepare a general theory about solidity for actions in Section \ref{Solidity for actions on von Neumann algebras}.

\bigskip
\noindent
{\bf Acknowledgment. }
We would like to thank C. Houdayer and A. Marrakchi for useful comments for the first draft of the article.

\tableofcontents

\section{Preliminaries}

\subsection{Nonsingular Bernoulli actions}
\label{Nonsingular Bernoulli actions}

	Let $ G $ be a countable discrete group acting on a countable set $\cI$. Consider a product measure space with two base points
	$$ (\Omega , \mu):=\prod_{i\in \mathcal I} (\{0,1\}, p_i\delta_0 + q_i \delta_1)\quad \text{where $p_i\in (0,1)$ and $p_i+q_i=1$ for all $i\in \cI$}. $$
Kakutani's criterion \cite{Ka48} shows that, for any $g\in G$, the assignment $g\cdot (x_i)_i := (x_{g^{-1}\cdot i})_i$ for $(x_i)_i\in \Omega$ defines a nonsingular automorphism of $(\Omega,\mu)$ if and only if
	$$ \sum_{i\in I} \left( \sqrt{\mu_i(0)} - \sqrt{\mu_{g\cdot i}(0)} \right)^2+ \left( \sqrt{\mu_i(1)} - \sqrt{\mu_{g\cdot i}(1)} \right)^2<\infty.$$
It is straightforward to see that $(\Omega,\mu)$ has no atoms if and only if 
	$$\sum_{i\in I} \min\{\mu_i(0),\mu_i(1)\} = \infty.$$
Both conditions are trivially satisfied for our concrete examples in Corollary \ref{corB} and \ref{corC} and hence nonsingular (generalized) Bernoulli actions are defined.

\subsection{Bi-exactness and condition (AO)}
\label{Bi-exactness and condition (AO)}

We refer the reader to \cite[Subsection 4.4]{BO08} for amenability of actions, \cite[Section 5]{BO08} for exactness, and \cite[Section 15]{BO08} for bi-exactness of groups.

Let $ G  \act X$ be an action of a discrete group on a compact Hausdorff space. Consider the associated action $ G  \act C(X)$. 
We say that it is \textit{(topologically) amenable} if the reduced and full crossed products coincide: $C(X)\rtimes_{\rm red} G  = C(X)\rtimes_{\rm full}  G $. 
More generally for any action $ G  \act A$ on a unital C$^*$-algebra $A$, the action is \textit{amenable} if the restriction to the center of $A$ is amenable. In that case, we also have $A\rtimes_{\rm red} G  = A\rtimes_{\rm full}  G $.

We say that a countable discrete group $ G $ is \textit{bi-exact} if the left and right translation action $ G  \times  G  \act \ell^\infty( G )/c_0( G )$ is amenable. This should be compared to the fact that $ G $ is \textit{exact} if and only if the left translation $ G  \act \ell^\infty( G )/c_0( G )$ is amenable. It is proved in \cite[Lemma 15.1.4]{BO08} that if $ G  $ is bi-exact, then the following algebraic $\ast$-homomorphism
	$$ \nu\colon C_\lambda^*( G )\ota C_\rho^*( G )\to \B(\ell^2( G ))/\K(\ell^2( G ));\quad a\otimes b\mapsto ab+\K(\ell^2( G )) $$
is bounded with respect to the minimal tensor norm (say, \textit{min-bounded}). This boundedness is called \textit{condition (AO)} \cite{Oz03}. In this article, we study such a condition (AO) phenomena for nonsingular Bernoulli actions.

\subsection{Standard representations}
\label{Standard representations}

For Tomita--Takesaki's modular theory, we refer the reader to \cite{Ta03}. Let $M$ be a von Neumann algebra equipped with a faithful normal state $\varphi$. Consider the GNS representation $L^2(M):=L^2(M,\varphi)$. The involution $M\ni a \mapsto a^*\in M$ induces a closed operator $S_\varphi$ on $L^2(M)$ and we denote by $S_\varphi = J_\varphi \Delta_\varphi $ its polar decomposition. We say that $\Delta_\varphi$ and $J_\varphi$ the \textit{modular operator} and the \textit{modular conjugation}. The natural $M$-action on $L^2(M)$ is called \textit{Haagerup's standard representation} (e.g.\ \cite[Chapter IX $\S$2]{Ta03}) and the associated $J$-map is $J_\varphi$. We regard $J_\varphi M J_\varphi = M'$ as a right representation of $M$ on $L^2(M)$. 
For any $\alpha\in \Aut(M)$, there is a unique unitary $U\in \B(L^2(M))$ such that $\alpha = \Ad(U)$ and that $U$ preserves the structure of the standard representation (e.g.\ $UJ_\varphi = J_\varphi U$). We call $U$ the \textit{standard implementation} of $\alpha$.

Let $\alpha\colon  G \act M$ be an action of a discrete group $ G $ and $U_g \in \mathcal U(L^2(M))$ the standard implementation of $\alpha_g$ for each $g\in  G $. We use the standard representation $L^2(M)\otimes \ell^2( G )$ of $M\rtimes_{\alpha}  G $ given by: with the $J$-map $J_M$ on $L^2(M)$,
\begin{align*}
	\text{left action}\quad &G  \ni g \mapsto U_g \otimes \lambda_g;\quad M\ni a\mapsto a\otimes 1_ G ;
\\
	\text{right action}\quad &  G  \ni g \mapsto 1\otimes \rho_g;\quad J_MMJ_M \ni a \mapsto \pi_r (a)=\sum_{h\in  G }U_h a U_{h}^{-1}\otimes e_{h,h};\\
	\text{$J$-map}\quad 	&J= \sum_{h\in  G }U_hJ_M \otimes e_{h,h^{-1}}.
\end{align*}
Here $\{e_{g,h}\}_{g,h\in  G }$ is the matrix unit in $\B(\ell^2( G ))$. Note that the commutant is regarded as a crossed product by 
	$$(M\rtimes_{\alpha}  G )'= J(M\rtimes_{\alpha}  G )J  =  (J_M M J_M)\rtimes_{\alpha^r}  G  = M'\rtimes_{\alpha^r} G ,$$
where $\alpha^r\in \Aut(M')$ is given by $\Ad(U_g)$.

\subsection{Fock spaces and associated von Neumann algebras}
\label{Fock spaces and associated von Neumann algebras}

For Fock spaces, we refer the reader to \cite[Subsection 5.2]{BR97}. 

Let $H$ be a Hilbert space. For each $n\in \N$, consider the $n$-tensor product of $H$, denoted by $H^{\otimes n}$. We define the \textit{full Fock space} $\cF_\full(H)$ by the completion of 
	$$\C \Omega \oplus \bigoplus_{n \geq 1} H^{\otimes n}.$$
We denote by $\frakS_n$ the symmetric group over $\{1,\ldots,n\}$. Put
	$$ \pi^{(n)}_\sigma (\xi_1\otimes \cdots \otimes \xi_n) := \xi_{\sigma(1)}\otimes \cdots \otimes \xi_{\sigma(n)},\quad \text{for }\sigma\in \frakS_n, \ \xi_1,\ldots ,\xi_n\in H $$
which defines a unitary representation $\pi^{(n)}$ of $\frakS_n$ on $H^{\otimes n}$. Put
 	$$P_-^{(n)}:=\sum_{\sigma\in \mathfrak{S}_n} \mathrm{sgn}(\sigma)\pi^{(n)}_\sigma\in \B(H^{\otimes n})$$
and observe that $\frac{1}{n!}P_-^{(n)}$ is a projection. We use the notation
	$$\xi_1\wedge \cdots \wedge \xi_n:= \frac{1}{n!}P_-^{(n)}(\xi_1\otimes \cdots \otimes \xi_n) =\frac{1}{n!}\sum_{\sigma\in \frakS_n}\mathrm{sgn}(\sigma)(\xi_{\sigma(1)}\otimes \cdots \otimes \xi_{\sigma(n)})$$
for all $\xi_1,\ldots ,\xi_n\in H$. It holds that
	 $$\xi_{\sigma(1)}\wedge \cdots \wedge \xi_{\sigma(n)}=\mathrm{sgn}(\sigma)(\xi_1\wedge \cdots \wedge \xi_n),\quad  \text{for all }\sigma\in \frakS_n.$$
In particular $\xi_1\wedge \cdots \wedge \xi_n=0$ if $\xi_i=\xi_j$ for some $i\neq j$. 
We denote the range of $\frac{1}{n!}P_-^{(n)}$ by $H^{\wedge n}$ and note that
	$$H^{\wedge n} = \{\xi\in H^{\otimes n}\mid \pi^{(n)}_\sigma\xi = \mathrm{sgn}(\sigma)\xi\ \text{ for all }\sigma\in \frakS_n\}. $$
Consider a large projection
	$$ P_-:=1_\Omega\oplus \bigoplus_{n\geq 1} \frac{1}{n!}P_-^{(n)} \in \B(\cF_\full(H)).$$
We define the \textit{anti-symmetric Fock space} $\cF_\anti(H)$ by the range of $P_-$, that is,
	$$ \cF_\anti(H):=\C \Omega \oplus \bigoplus_{n\geq 1}H^{\wedge n}. $$
For $\xi\in H$, define \textit{left} and \textit{right creation operators} by
\begin{align*}
	&\ell(\xi)(\xi_1\wedge \cdots\wedge \xi_n) := \sqrt{n+1}\, (\xi\wedge \xi_1\wedge \cdots\wedge \xi_n);\\
	& r(\xi)(\xi_1\wedge \cdots\wedge \xi_n) := \sqrt{n+1}\, (\xi_1\wedge \cdots\wedge \xi_n\wedge \xi)
\end{align*}
for all $n\in \N$ and $\xi_1,\ldots,\xi_n\in H$. It holds that 
\begin{align*}
	\ell(\xi)^*(\xi_1\wedge\cdots \wedge \xi_n)
	&= \frac{1}{\sqrt{n}}\sum_{k=1}^n(-1)^{k-1}\, \langle \xi_k,\xi \rangle\, (\xi_1\wedge\cdots\wedge \check{\xi}_k\wedge \cdots \wedge \xi_n),\\
	r(\xi)^*(\xi_1\wedge\cdots \wedge \xi_n)
	&= \frac{1}{\sqrt{n}}\sum_{k=1}^n(-1)^{k-1}\, \langle \xi_{n-k+1},\xi \rangle\, (\xi_1\wedge\cdots\wedge \check{\xi}_{n-k+1}\wedge \cdots \wedge \xi_n),
\end{align*}
where the check $\check{\xi}_k$ means it is ignored in the tensor. They satisfy the \textit{canonical anti-commutation relation} (CAR)  
\begin{align*}
	\ell(\xi)\ell(\eta) + \ell(\eta)\ell(\xi) = 0,\quad 
	\ell(\xi)\ell(\eta)^* + \ell(\eta)^*\ell(\xi) = \langle \xi,\eta\rangle,\quad \text{for all }\xi,\eta\in H.
\end{align*}
The C$^*$-algebra generated by $\ell(\xi)$ for all $\xi\in H$ is called the \textit{CAR algebra}. It is isomorphic to the UHF algebra of type $2^{d}$, where $d=\dim H$. 

Let $U\colon H\to H$ be a unitary operator. Then for each $n\in \N$, the assignment 
	$$ \xi_1\wedge \cdots \wedge \xi_n \mapsto (U\xi_1)\wedge \cdots \wedge (U\xi_n) $$
defines a unitary on $H^{\wedge}$, which we denote by $U^{\wedge n}$. We can define a large unitary 
	$$ U^\anti:=1_\Omega \oplus \bigoplus_{n\geq 1} U^{\wedge n}  \quad \text{on }\cF_\anti.$$
It satisfies $U^{\anti}\ell(\xi)U^{\anti *}=\ell(U\xi)$ and $U^{\anti}r(\xi)U^{\anti *}=r(U\xi)$ for all $\xi \in H$.

We next introduce an associated von Neumann algebra. For this, consider a \textit{real} Hilbert space $H_\R$ and extend it to a complex Hilbert space by $H:=H_\R\otimes_\R \C$. We identify $H_\R\otimes_\R 1 = H_\R$. Let $I$ be the involution for $H_\R \subset H$, that is, $I(\xi\otimes \lambda)=\xi\otimes \overline{\lambda}$ for $\xi\otimes \lambda\in H_\R \otimes_\R \C$. Note that $H_\R=\{ \xi\in H\mid I\xi = \xi \}$, hence $I$ remembers the real structure $H_\R \subset H$.

Let $U\colon \R\to \mathcal O(H_\R)$ be any strongly continuous representation of $\R$ on $H_\R$. We extend $U$ on $H$ by $U_t\otimes \id_{\C}$ for $t\in \R$ and use the same notation $U_t$. By Stone's theorem, take the infinitesimal generator $A$ satisfying $U_t=A^{\ri t}$ for all $t\in \R$. Consider an embedding
\begin{align*}
	&H \to H;\quad \xi \mapsto \frac{\sqrt{2}}{\sqrt{1+A^{-1}}}\, \xi=:\widehat{\xi}.
\end{align*}
We define associated C$^*$ and von Neumann algebras by
\begin{align*}
	& \mathrm{C}^*_\anti(H_\R,U):= \mathrm{C}^*\{ W(\widehat{\xi}) \mid \xi \in H\};
	&  \Gamma_\anti(H_\R,U):= \mathrm{W}^*\{ W(\widehat{\xi}) \mid \xi \in H\},
\end{align*}
where $W(\widehat{\xi}):=\ell(\widehat{\xi}) + \ell(\widehat{I\xi})^*$. Then $\Omega$ is cyclic and separating, and the vacuum state $\langle \,\cdot\, \Omega,\Omega \rangle$ defines a faithful normal state on $ \Gamma_\anti(H_\R,U)$. The associated modular operator $\Delta$ and the conjugation $J$ on $\cF_\anti(H)$ is given by
\begin{align*}
	 \Delta^{\ri t} (\xi_1\otimes \cdots\otimes \xi_n) &= (U_{-t}\xi_1)\otimes \cdots\otimes (U_{-t}\xi_n)\\
	 J(\xi_1\otimes \cdots\otimes \xi_n) &= (I\xi_n)\otimes \cdots\otimes (I\xi_1)
\end{align*}
for all $t\in \R$, $n\in \N$ and $\xi_1,\ldots,\xi_n\in H$. We refer the reader to \cite[Lemma 1.4]{Hi02}, which treats the $q$-Fock space for $q\in (-1,1)$ but the same proof works the case $q=-1$ (the $q$-Fock space is canonically isomorphic to $\cF_\anti$ when $q=-1$).

Since $J\ell(\xi)J = r(I\xi)$ for $\xi\in H$, the commutant of $ \Gamma_\anti(H_\R,H)$ is generated by 
	$$ W_r(I\, \widehat{\xi}) := JW(\widehat{\xi})J=r(I\, \widehat{\xi}) + r(I\,( \widehat{I\xi}))^* .$$
Write the C$^*$ and the von Neumann algebra generated by $W_r(I\, \widehat{\xi})$ for $\xi\in H$ as
	$$\mathrm{C}^*_{\anti,r}(H_\R,U):= J\mathrm{C}^*_{\anti}(H_\R,U)J ,\quad  \Gamma_{\anti,r}(H_\R,U):= J \Gamma_{\anti}(H_\R,U)J =  \Gamma _{\anti}(H_\R,U)'.$$

\subsection{Quasi-free states on CAR algebras}
\label{Quasi-free states on CAR algebras}

We keep the notation from the last subsection. We explain   $\mathrm{C}^*_\anti(H_\R,U)$ has a structure of the CAR algebra. We refer the reader to \cite[Section 6]{EK98} for all facts in this subsection. 

Assume first that $H_\R\subset H$ (with $H,I$) is given and $H$ has dimension $2d$. We say that a projection $P\in \B(H)$ is a \textit{basis projection} if $IPI=1-P$. We say that a linear map $B$ from $H$ to a unital C$^*$-algebra $C$ satisfies the \textit{self dual CAR} if 
	$$ B(\xi)^*=B(I\xi),\quad B(\xi)B(\eta)^*+B(\eta)^*B(\xi)=\langle \xi,\eta \rangle 1_C,\quad \text{for all }\xi,\eta\in H.$$
For any such $P$ and $B$, the restriction $B|_{PH}$ satisfies the CAR. Hence $\{B(\xi)\}_{\xi \in PH}$ generates the UHF algebra of type 2$^d$. More precisely, for any orthonormal basis $\{e_n\}_{n=1}^d$ in $PH$, we put $c_n:= B(e_n)$, $u_n:=1-2c_nc_n^*$, $v_n:=u_1\cdots u_n$ (here $v_0:=1$), and
	$$ \left[
\begin{array}{cc}
e^n_{11} &e^n_{12} \\
e^n_{21} &e^n_{22} \\
\end{array}
\right] := 
\left[
\begin{array}{cc}
c_n^*c_n &v_{n-1}c_n^* \\
c_n v_{n-1} &c_nc_n^* \\
\end{array}
\right] ,\quad \text{for all }1\leq n\leq d.$$
Then by the CAR equation, $\{e^n_{ij}\}_{ij}$ is a matrix unit for each $n$ and they are commuting for different $n$. Thus $B(PH)$ generates  the UHF algebra of type 2$^d$.

Next we consider $U\colon \R\act H_\R$ with $A$ as in the last subsection and define $\mathrm{C}^*_\anti(H_\R,U)$. Assume that there is a basis projection $P\in \B(H)$ such that it commutes with $U_t$ for all $t\in \R$. Then the map
	$$ B\colon H\to \B(\cF_\anti(H));\quad B(\xi):=\frac{1}{\sqrt{2}}\, (\ell(\widehat{\xi}) + \ell(\widehat{I\xi})^*) = \frac{1}{\sqrt{2}}\, W(\widehat{\xi})$$
satisfies the self dual CAR. In particular, $B(PH)$ generates the UHF algebra of type 2$^d$, which is by definition coincides with $\mathrm{C}^*_\anti(H_\R,U)$. In this way, we can identify $\mathrm{C}^*_\anti(H_\R,U)$ as the C$^*$-algebra generated by the CAR family $\{B(\xi)\}_{\xi \in PH}$. 
By putting $c(\xi):=B(P\xi)$ for $\xi\in PH$, the vacuum state $\varphi$ on $\mathrm{C}^*_\anti(H_\R,U)$ satisfies for all $\xi_1,\ldots,\xi_n,\eta_1,\ldots,\eta_m\in PH$,
	$$ \varphi(c(\eta_m)^*\cdots c(\eta_1)^* c(\xi_1)\cdots c(\xi_n))=\delta_{mn}\mathrm{det}[\langle R(A)\xi_i,\eta_j\rangle]_{ij} ,$$
where $R(A):=(1+A^{-1})^{-1} \in \B(PH)$. A (unique) state satisfying this condition is called the \textit{quasi-free state} for $R(A)$, denoted by $\omega_{R(A)}$. We get $\varphi = \omega_{R(A)}$.

	Let $V\colon H\to H$ be a unitary such that $VP=PV$, $VI=IV$, but possibly not commuting with $U_t$ for $t\in \R$. In this case, since $PH\ni \xi\mapsto c(V\xi)$ satisfies the CAR, by the uniqueness of the CAR algebra, the map
	$$ \alpha_V\colon \mathrm{C}^*_\anti(H_\R,U)\ni c(\xi)\mapsto c(V\xi)  \in \mathrm{C}^*_\anti(H_\R,U),\quad \text{for }\xi\in PH$$
is a well defined $\ast$-isomorphism. Then $\alpha_V$ extends to an isomorphism on $ \Gamma_\anti(H_\R,U)$ if and only if for $R_1:=R(A)$ and $R_2:=VR(A)V^*=R(VAV^*)$ in $\B(PH)$, $R_1^{1/2}-R_2^{1/2}$ and $(1-R_1)^{1/2}-(1-R_2)^{1/2}$ are Hilbert--Schmidt operators. This follows by the theory of quasi-free states.

\section{Boundary amenability on anti-symmetric Fock spaces}
\label{Boundary amenability on anti-symmetric Fock spaces}

In this section, we construct a boundary C$^*$-algebra acting on a given anti-symmetric Fock space. We prove some amenability result for the boundary.

\subsection{$\ell^\infty$-functions acting on anti-symmetric Fock spaces}\label{functions acting on anti-symmetric Fock spaces}

Let $X$ be a set and we consider $\ell^2(X)$. Define associated Fock spaces
\begin{align*}
	&\cF_\full:=\cF_\full(\ell^2(X)) =\C \Omega \oplus \bigoplus_{n \geq 1}\ell^2(X)^{\otimes n} ;\\
	&\cF_\anti:=\cF_\anti(\ell^2(X)) =\C \Omega \oplus \bigoplus_{n \geq 1}\ell^2(X)^{\wedge n}.
\end{align*}
Observe that each $\ell^2(X)^{\otimes n}$ in $\cF_{\rm full}$ has an identification
	$$\ell^2(X)^{\otimes n} = \ell^2(X^{n});\quad \delta_{x_1}\otimes \cdots \otimes \delta_{x_n} = \delta_{(x_1, \ldots , x_n)}.$$
So by using a large set 
	$$ \sfX := \{ \star \} \sqcup \bigsqcup_{n \geq 1} X^n,$$
where $\{\star\}$ is the singleton, we can identify 
	$$\ell^2 (\sfX) = \cF_{\rm full},\quad \text{with }\C \delta_\star = \C \Omega.$$
In this subsection, we frequently regard $\cF_\full$ (and hence $\cF_\anti$) as a function space.

For each $n\geq 1$, consider the natural action of $\frakS_n$ on $X^n$ by permutations:
	$$\sigma\cdot (x_1,\ldots,x_n):=(x_{\sigma(1)},\ldots,x_{\sigma(n)})\quad \text{for }\sigma\in \frakS_n,\ (x_1,\ldots,x_n)\in X^n.$$
 Put $Y_n:=X^n/\frakS_n$ and we denote by $[x]=[x_1,\ldots,x_n]$ the image of $x=(x_1,\ldots,x_n)\in X^n$ in $Y_n$. Define 
	$$Z_n:=\{[x_1,\ldots,x_n] \in Y_n\mid x_i\neq x_j \quad \text{for all }i, j\in \{1,\ldots,n\} \text{ with }i\neq j\}$$
and put
	$$\sfZ := \{\star\} \sqcup \bigsqcup_{n=1}^\infty Z_n,\quad \ell^\infty(\sfZ)=\ell^\infty(\{\star\})\oplus \bigoplus_{n\geq 1}\ell^\infty(Z_n).$$
The goal of this subsection is to prove the following proposition. We will construct our boundary via the embedding of this proposition.

\begin{prop}\label{prop-embedding}
	There is a unique injective $\ast$-homomorphism $\iota\colon \ell^\infty(\sfZ)\to \B(\cF_\anti)$ that satisfies the following condition: 
for any $n\in \N$, $f\in \ell^\infty(Z_n)$ and distinct $x_1,\ldots,x_n\in X$ (i.e.\ $x_i\neq x_j$ for all $i\neq j$), we have
	$$\iota(f)\, \delta_{x_1}\wedge \cdots \wedge \delta_{x_n} = f([x_1,\ldots,x_n])\, \delta_{x_1}\wedge \cdots \wedge \delta_{x_n}.$$
It holds that $\iota(\ell^\infty(\sfZ))\cap \K(\cF_\anti) = \iota(c_0(\sfZ))$.
\end{prop}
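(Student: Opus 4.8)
The plan is to diagonalize $\iota$ against a natural orthonormal basis of $\cF_\anti$ indexed by $\sfZ$, and then to invoke the classical description of compact diagonal operators. First I would record the basis. Fix a total order on $X$; for each $n\geq 1$ and each class $[x]=[x_1,\ldots,x_n]\in Z_n$ take its increasing representative and set
$$ e_{[x]}:=\sqrt{n!}\;\delta_{x_1}\wedge\cdots\wedge\delta_{x_n},\qquad e_\star:=\Omega. $$
Using the Gram formula $\langle \delta_{x_1}\wedge\cdots\wedge\delta_{x_n},\,\delta_{y_1}\wedge\cdots\wedge\delta_{y_n}\rangle=\tfrac{1}{n!}\det[\langle \delta_{x_i},\delta_{y_j}\rangle]_{ij}$, one checks that $\|e_{[x]}\|=1$ (the diagonal Gram matrix gives $\det=1$, so $\|\delta_{x_1}\wedge\cdots\wedge\delta_{x_n}\|^2=1/n!$), that two distinct classes in the same $Z_n$ are orthogonal (the Gram determinant vanishes as soon as $\{x_i\}\neq\{y_j\}$ as sets, since then some row is zero), and that distinct levels $\ell^2(X)^{\wedge n}$ are mutually orthogonal. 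Completeness at level $n$ holds because $\ell^2(X)^{\wedge n}=\operatorname{ran}\tfrac1{n!}P_-^{(n)}$ is spanned by the vectors $\delta_{x_1}\wedge\cdots\wedge\delta_{x_n}$, which vanish unless the $x_i$ are pairwise distinct and then depend on the class only up to sign; thus exactly the tuples indexed by $Z_n$ survive. Hence $\{e_z\}_{z\in\sfZ}$ is an orthonormal basis of $\cF_\anti$, and it is precisely the distinctness condition defining $Z_n$ that makes these vectors nonzero and orthogonal.

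By the defining relation of $\iota$ (with $z=\star$ read as the action on the vacuum), we have $\iota(f)e_z=f(z)\,e_z$ for every $z\in\sfZ$, so $\iota(f)$ is the diagonal operator with symbol $f$ relative to $\{e_z\}_{z\in\sfZ}$. The desired equality therefore reduces to the standard fact that, for the diagonal embedding $\ell^\infty(\sfZ)\hookrightarrow\B(\ell^2(\sfZ))$, the compact elements are exactly $c_0(\sfZ)$.

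For $\iota(c_0(\sfZ))\subseteq\iota(\ell^\infty(\sfZ))\cap\K(\cF_\anti)$: given $f\in c_0(\sfZ)$ and $\varepsilon>0$, the set $F_\varepsilon:=\{z:|f(z)|\geq\varepsilon\}$ is finite, so $\iota(f\,1_{F_\varepsilon})$ is finite rank and $\|\iota(f)-\iota(f\,1_{F_\varepsilon})\|\leq\varepsilon$; hence $\iota(f)\in\K(\cF_\anti)$. For the reverse inclusion, suppose $f\in\ell^\infty(\sfZ)$ satisfies $\iota(f)\in\K(\cF_\anti)$ but $f\notin c_0(\sfZ)$. Then there are $\varepsilon>0$ and infinitely many distinct $z_1,z_2,\ldots$ with $|f(z_k)|\geq\varepsilon$; the $e_{z_k}$ form an infinite orthonormal sequence, hence converge weakly to $0$, so compactness forces $\|\iota(f)e_{z_k}\|\to0$, contradicting $\|\iota(f)e_{z_k}\|=|f(z_k)|\geq\varepsilon$. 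Thus $f\in c_0(\sfZ)$, and by injectivity of $\iota$ this gives $\iota(\ell^\infty(\sfZ))\cap\K(\cF_\anti)=\iota(c_0(\sfZ))$.

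The only genuine content lies in the first step, namely the normalization $\|\delta_{x_1}\wedge\cdots\wedge\delta_{x_n}\|^2=1/n!$ and the orthogonality of distinct classes through the Gram determinant; the compactness dichotomy is the textbook characterization of diagonal compact operators, so I expect no serious obstacle beyond carefully bookkeeping the basis indexed by $\sfZ$.
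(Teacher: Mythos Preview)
Your argument is correct and takes a more direct route than the paper. The paper constructs $\iota$ by identifying $\ell^\infty(Z_n)$ with a subalgebra of the $\frakS_n$-invariant functions $\ell^\infty_\sym(X^n)$ and then invoking the elementary fact (Lemma~\ref{lem-embedding}) that pointwise multiplication by a symmetric function preserves $\ell^2_\anti(X^n)=\ell^2(X)^{\wedge n}$; the diagonal formula is then verified by unwinding this multiplication on wedge vectors. You bypass this functional picture entirely by exhibiting an orthonormal basis of $\cF_\anti$ indexed by $\sfZ$ and observing that the defining relation forces $\iota$ to be the diagonal representation relative to it, which makes the $\ast$-homomorphism property, injectivity, and uniqueness immediate and reduces the compact characterization to the textbook fact that a diagonal operator on $\ell^2(\sfZ)$ is compact iff its symbol lies in $c_0(\sfZ)$. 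The paper's route has the mild advantage of explaining \emph{why} $\iota$ exists---it is literally multiplication by symmetric functions---which is conceptually convenient for the commutator estimates with creation operators in the next subsection; your route is shorter and needs no auxiliary lemma. One small point to close the loop: your sentence ``by the defining relation of $\iota$ \ldots\ $\iota(f)$ is the diagonal operator'' is really the uniqueness direction; for existence you should add that, conversely, \emph{defining} $\iota(f)$ to be diagonal with symbol $f$ yields a $\ast$-homomorphism satisfying the required relation, since every $\delta_{x_1}\wedge\cdots\wedge\delta_{x_n}$ with distinct $x_i$ equals $\pm\, e_{[x_1,\ldots,x_n]}/\sqrt{n!}$.
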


The permutation $\frakS_n\act X^n$ induces actions 
	$$\frakS_n\act \ell^p(X^n);\quad \sigma(f):=f\circ \sigma^{-1} \quad \text{for }f\in \ell^p(X^n),\ \sigma\in \frakS_n$$
where $p\in\{2, \infty\}$. For $f\in \ell^p(X^n)$, we say that $f$ is \textit{symmetric} if 
	$$\sigma (f)=f\quad \text{for all }\sigma\in \frakS_n.$$
We say that $f$ is \textit{anti-symmetric} if 
	$$\sigma (f)=\mathrm{sgn}(\sigma)f \quad \text{for all }\sigma\in \frakS_n.$$
Define two closed subspaces
\begin{align*}
	&\ell^\infty_\sym(X^n): = \{f\in \ell^\infty(X^n)\mid \text{symmetric}\};\\
	&\ell^2_\anti(X^n) := \{f\in \ell^2(X^n)\mid \text{anti-symmetric}\}.
\end{align*}

\begin{lem}\label{lem-embedding}
The following assertions hold true.
\begin{enumerate}
	\item For $f\in \ell^\infty_\sym(X^n)$ and $g\in \ell^2_\anti(X^n)$, we have $fg\in \ell^2_\anti(X^n)$. 

	\item Under the identification $ \ell^2(X^n)=\ell^2(X)^{\otimes n}$, the subspace $\ell^2_\anti(X^n)$ coincides with $\ell^2(X)^{\wedge n}$.

\end{enumerate}
\end{lem}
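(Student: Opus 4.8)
The plan is to treat the two statements separately, but in both the essential move is the same: to understand the $\frakS_n$-action $\sigma(f)=f\circ\sigma^{-1}$ on $\ell^p(X^n)$ at the level of the canonical basis $\{\delta_x\}_{x\in X^n}$, and to match it with the permutation representation $\pi^{(n)}$ on $\ell^2(X)^{\otimes n}$.

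For (1) I would first note that $fg\in\ell^2(X^n)$, since $\|fg\|_2\le\|f\|_\infty\|g\|_2$, so the pointwise product makes sense. The key observation is that the action is multiplicative on functions: from $\sigma(h)(x)=h(\sigma^{-1}\cdot x)$ one gets $\sigma(fg)(x)=f(\sigma^{-1}\cdot x)\,g(\sigma^{-1}\cdot x)=\sigma(f)(x)\,\sigma(g)(x)$, that is $\sigma(fg)=\sigma(f)\,\sigma(g)$ pointwise. Substituting $\sigma(f)=f$ (symmetry of $f$) and $\sigma(g)=\mathrm{sgn}(\sigma)g$ (anti-symmetry of $g$) yields $\sigma(fg)=\mathrm{sgn}(\sigma)(fg)$ for every $\sigma\in\frakS_n$, which is precisely the assertion $fg\in\ell^2_\anti(X^n)$.

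For (2) the main step is to identify the operator $f\mapsto\sigma(f)$ on $\ell^2(X^n)$ with $\pi^{(n)}_\sigma$ on $\ell^2(X)^{\otimes n}$ under the identification $\delta_{(x_1,\dots,x_n)}=\delta_{x_1}\otimes\cdots\otimes\delta_{x_n}$. I would check this on basis vectors: directly from the definitions, $\sigma(\delta_x)=\delta_{\sigma\cdot x}$, while $\pi^{(n)}_\sigma(\delta_{x_1}\otimes\cdots\otimes\delta_{x_n})=\delta_{x_{\sigma(1)}}\otimes\cdots\otimes\delta_{x_{\sigma(n)}}=\delta_{\sigma\cdot x}$. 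Since the two operators agree on the orthonormal basis $\{\delta_x\}_{x\in X^n}$, they coincide. Consequently $\ell^2_\anti(X^n)$, cut out by the conditions $\sigma(f)=\mathrm{sgn}(\sigma)f$ for all $\sigma\in\frakS_n$, is carried exactly onto $\{\xi\in\ell^2(X)^{\otimes n}\mid \pi^{(n)}_\sigma\xi=\mathrm{sgn}(\sigma)\xi\ \text{for all }\sigma\}$, which by the description of $H^{\wedge n}$ recalled in Subsection~\ref{Fock spaces and associated von Neumann algebras} is exactly $\ell^2(X)^{\wedge n}$.

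Neither part poses a genuine difficulty; both reduce to bookkeeping once the dictionary between functions on $X^n$ and tensors is fixed. The one point that requires care---and essentially the only place I expect to have to think---is the matching of conventions in (2): one must verify that $\sigma(\cdot)$ and $\pi^{(n)}_\sigma$ are the \emph{same} operator, rather than the inverse or adjoint of one another, which is why I would compute $\sigma(\delta_x)$ explicitly and keep track of whether $\sigma\mapsto\sigma\cdot(-)$ on $X^n$ is a left or a right action. Once this is settled, both claims follow immediately.
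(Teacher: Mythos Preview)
Your proposal is correct and follows essentially the same approach as the paper's proof. The paper dismisses (1) as ``trivial by definitions'' and for (2) simply states that $\pi^{(n)}_\sigma\xi$ coincides with $\xi\circ\sigma^{-1}$ as a function; your argument unpacks exactly these two points, with the explicit basis-vector check for (2) making precise what the paper asserts in one sentence.
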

\begin{proof}
	1. This is trivial by definitions.

	2. Recall that $\ell^2(X)^{\wedge n}=\{\xi\in \ell^2(X)^{\otimes n}\mid \pi^{(n)}_\sigma\xi = \mathrm{sgn}(\sigma)\xi,\ \text{for all }\sigma\in \frakS_n\}$. Since $\pi^{(n)}_\sigma\xi$ coincides with $\xi\circ \sigma^{-1}$ as a function, $\ell^2(X)^{\wedge n}$ is exactly  $\ell^2_\anti(X^n)$.
\end{proof}

\begin{proof}[Proof of Proposition \ref{prop-embedding}]
	Since $Y_n=X^n/\frakS_n$, there is a canonical identification
	$$\ell^\infty_\sym(X^n) = \ell^\infty(X^n)^{\frakS_n} = \ell^\infty(Y_n).$$
This means that any $f\in \ell^\infty_\sym(X^n)$ can be regarded as a function on $Y_n$ by $f([x_1,\ldots,x_n]):=f(x_1,\ldots,x_n)$ for any $(x_1,\ldots,x_n)\in X^n$. By Lemma \ref{lem-embedding}, we have a $\ast$-homomorphism 
	$$\iota_n\colon \ell^\infty(Y_n)=\ell^\infty_\sym(X^n)\to \B(\ell^2_\anti(X^n))=\B(\ell^2(X)^{\wedge n}),$$
which satisfies
	$$ \iota_n(f)g = fg,\quad \text{for all }f\in \ell^\infty_\sym(X^n),\ g\in \ell^2_\anti(X^n) .$$
Observe that $\ell^\infty(Z_n)$ is a (non-unital) subalgebra in $\ell^\infty(Y_n)$ (since $Z_n\subset Y_n$). We restrict $\iota_n$ on $\ell^\infty(Z_n)$ and define $\iota:=\bigoplus_n\iota_n\colon \ell^\infty(\sfZ) \to \B(\cF_\anti)$. We will show that this is the desired one.

For distinct $x_1,\ldots,x_n\in X$, we regard $g:=\delta_{x_1}\wedge \cdots \wedge \delta_{x_n}\in \ell^2(X)^{\wedge n}$ as a function on $X^n$ by Lemma \ref{lem-embedding}.2, so that for any $y=(y_1,\ldots,y_n)\in X^n$,
\begin{align*}
	g(y) 
	&= \left(\frac{1}{n!}\sum_{\sigma\in \frakS_n}\mathrm{sgn}(\sigma)\, \delta_{x_{\sigma(1)}}\otimes  \cdots \otimes \delta_{x_{\sigma(n)}}\right)(y)\\
	&=\frac{1}{n!}\sum_{\sigma\in \frakS_n}\mathrm{sgn}(\sigma)\, \delta_{x_{\sigma(1)}}(y_1) \cdots \delta_{x_{\sigma(n)}}(y_n)
\end{align*}
For any $f\in \ell^\infty_\sym(X^n)=\ell^\infty(Y_n)$, it holds that
\begin{align*}
	(\iota_n(f)g) (y)
	&=f([y_1,\ldots,y_n])\, \frac{1}{n!}\sum_{\sigma\in \frakS_n}\mathrm{sgn}(\sigma)\, \delta_{x_{\sigma(1)}}(y_1) \cdots \delta_{x_{\sigma(n)}}(y_n).
\end{align*}
Observe that if there is no $\tau\in \frakS_n$ such that $(x_{\tau(1)},\ldots,x_{\tau(n)})=(y_1,\ldots,y_n)$, then the last term is zero. 
Assume that there is $\tau\in \frakS_n$ such that $(x_{\tau(1)},\ldots,x_{\tau(n)})=(y_1,\ldots,y_n)$. Then the last term is
\begin{align*}
	&=f([y_1,\ldots,y_n])\, \frac{1}{n!}\, \mathrm{sgn}(\tau)\, \delta_{x_{\tau(1)}}(y_1) \cdots \delta_{x_{\tau(n)}}(y_n)\\
	&=f([x_{\tau(1)},\ldots,x_{\tau(n)}])\, \frac{1}{n!}\, \mathrm{sgn}(\tau)\\
	&=f([x_{1},\ldots,x_{n}])\, \frac{1}{n!}\, \mathrm{sgn}(\tau).
\end{align*}
Combining these observations together, we conclude 
	$$ (\iota_n(f)g) (y) = f([x_{1},\ldots,x_{n}]) g(y) ,\quad \text{for  all }y\in X^n.$$
If $f\in \ell^\infty(Z_n)\subset \ell^\infty(Y_n)$, this is the desired condition for $\iota$.

We next show that $\iota$ is unique and injective on $\ell^\infty(\sfZ)$. The uniqueness is obvious since $\delta_{x_1}\wedge\cdots \wedge \delta_{x_n}$ for $n$ and distinct $x_1,\ldots,x_n\in X$ span a dense subspace of $\cF_\anti$. To see the injectivity, take any nonzero $f\in \ell^\infty(\sfZ)$ and pick up $n\in \N$ and $z\in Z_n$ such that $f(z)\neq 0$. Then writing $z=[x_1,\ldots,x_n]$, we have
	$$ \iota(f)\, \delta_{x_1}\wedge \cdots \wedge \delta_{x_n} = f([x_{1},\ldots,x_{n}]) \, \delta_{x_1}\wedge \cdots \wedge \delta_{x_n}\neq 0$$
This shows $\iota(f)\neq 0$ and hence $\iota$ is injective on $\ell^\infty(\sfZ)$.

We prove the last part of the statement. If $f\in c_0(\sfZ)$ has finite support, then by the equation in the statement which we have already proved, the range of $\iota(f)$ is finite dimensional. This observation shows that $\iota(c_0(\sfZ))\subset \iota(\ell^\infty(\sfZ))\cap \K(\cF_\anti) $. To see the converse, take any $\iota(f)\in \iota(\ell^\infty(\sfZ))\cap \K(\cF_\anti)$. Take any net $(z_\lambda)_{\lambda}$ in $\sfZ$ such that $z_\lambda \to \infty$. Take any representative $(x_1^\lambda,\ldots,x^\lambda_{n_\lambda})\in X^{n_\lambda}$ such that $z_\lambda = [x_1^\lambda,\ldots,x^\lambda_{n_\lambda}]$. 
Put $\xi_\lambda := \delta_{x_1^\lambda}\wedge \cdots \wedge \delta_{x^\lambda_{n_\lambda}}$ and $\eta_\lambda:=\xi_\lambda/\|\xi_\lambda\|$. Then observe that $z_\lambda \to \infty$ implies $\eta_\lambda \to 0$ weakly. Since $\iota(f)$ is compact, $\iota(f)\eta_\lambda = f(z_\lambda)\eta_\lambda \to 0$ in norm. This means $f(z_\lambda)\to 0$ and $f$ is in $c_0(\sfZ)$.
\end{proof}

\subsection{Definition of boundary}
\label{Definition of boundary}

Let $X$ be a set and consider $\cF_\anti:=\cF_\anti(\ell^2(X))$. We use the embedding $\ell^\infty(\sfZ)\subset \B(\cF_\anti)$ in Proposition \ref{prop-embedding} (we omit $\iota$ for simplicity). Define
	$$ C(\overline{\sfZ}):=\{ f\in \ell^\infty(\sfZ)\mid [f,a]\in \K(\cF_\anti)\ \text{for all }a\in \cC_{\ell,r} \} ,$$
where $\cC_{\ell,r}$ is the C$^*$-algebra generated by all left and right creation operators. Note that it is generated by $\ell(x):=\ell(\delta_x)$ and $r(x):=r(\delta_x)$ for all $x\in X$. 
Since $c_0(\sfZ)\subset \K(\cF_\anti)$, $C(\overline{\sfZ})$ contains $c_0(\sfZ)$, hence we can define a quotient C$^*$-algebra
	$$C(\partial \sfZ):=C(\overline{\sfZ})/c_0(\sfZ).$$
This should be regarded as a \textit{boundary} with respect to left and right creation operators. We will observe that any group action on $X$ induces a natural action on $C(\partial \sfZ)$, which we regard a \textit{boundary action}.

Let $ G \act X$ be an action of a discrete group $ G $. Consider the associated unitary representation
	$$\pi\colon G  \act \ell^2(X);\quad \pi_g\delta_x=\delta_{g\cdot x}$$
and the associated one on $\cF_\anti$ by 
	$$\pi_g^\anti =1_\Omega \oplus \bigoplus_{n \geq 1} \pi_g^{\wedge n},\quad g\in  G  .$$
We also consider a natural diagonal action $ G  \act \sfZ$ given by 
	$$g\cdot [x_1,\ldots,x_n]=[g\cdot x_1,\ldots,g\cdot x_n]\quad \text{for }g\in G, \ n\in \N , \ [x_1,\ldots,x_n]\in Z_n\subset \sfZ.$$
This induces an action $ G  \act \ell^\infty(\sfZ)$ by $g\cdot f:=f\circ g^{-1}$ for $f\in \ell^\infty(\sfZ)$ and $g\in  G $.

\begin{lem}\label{lem-action-boundary1}
	The following conditions hold true.
\begin{enumerate}
	\item For any $g\in  G $ and $f\in \ell^\infty(\sfZ)$, we have  $\pi_g^\anti f \pi_g^{\anti *} = g\cdot f$.

	\item For $f\in \ell^\infty(\sfZ)$, $f$ is contained in $C(\overline{\sfZ})$ if and only if $[f,a]\in \K(\cF_\anti)$ for all $a=\ell(x),\ell(x)^*,r(x),r(x)^*$ and $x\in X$.

\end{enumerate}
\end{lem}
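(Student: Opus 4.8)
The plan is to verify both assertions directly from the defining eigenvalue property of $\iota$ in Proposition \ref{prop-embedding}, together with elementary C$^*$-algebraic manipulations; no deep input is needed beyond the density of the wedge vectors and the ideal property of $\K(\cF_\anti)$.

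For part 1, I would test the equality $\pi_g^\anti f \pi_g^{\anti *} = g\cdot f$ on the total family of wedge vectors $\delta_{x_1}\wedge\cdots\wedge\delta_{x_n}$ with $x_1,\ldots,x_n\in X$ distinct, since these span a dense subspace of $\cF_\anti$. As $\pi_g$ is the permutation unitary $\delta_x\mapsto\delta_{g\cdot x}$, the induced unitary $\pi_g^\anti$ sends $\delta_{x_1}\wedge\cdots\wedge\delta_{x_n}$ to $\delta_{g\cdot x_1}\wedge\cdots\wedge\delta_{g\cdot x_n}$, and distinctness is preserved because $g$ acts bijectively on $X$. Applying $\pi_g^{\anti *}=\pi_{g^{-1}}^\anti$, then $\iota(f)$, then $\pi_g^\anti$, and invoking the eigenvalue relation of Proposition \ref{prop-embedding}, the vector $\delta_{x_1}\wedge\cdots\wedge\delta_{x_n}$ picks up the scalar $f([g^{-1}\cdot x_1,\ldots,g^{-1}\cdot x_n])=f(g^{-1}\cdot[x_1,\ldots,x_n])$, which is exactly $(g\cdot f)([x_1,\ldots,x_n])$ by the definition $g\cdot f=f\circ g^{-1}$. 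Since $\iota(g\cdot f)$ produces the same scalar on the same vectors, the two operators agree on a dense set and therefore coincide.

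For part 2, one implication is immediate, since the generators $\ell(x),\ell(x)^*,r(x),r(x)^*$ belong to $\cC_{\ell,r}$. For the converse, set $S_f:=\{a\in\B(\cF_\anti)\mid [f,a]\in\K(\cF_\anti)\}$. From the bound $\|[f,a]\|\le 2\|f\|\,\|a\|$ and the Leibniz identity $[f,ab]=[f,a]b+a[f,b]$, combined with the fact that $\K(\cF_\anti)$ is a norm-closed two-sided ideal, one sees that $S_f$ is a norm-closed subalgebra of $\B(\cF_\anti)$ containing the identity. The delicate point is that $S_f$ need not be self-adjoint when $f$ is complex valued, so I would simultaneously introduce $S_{f^*}$ and work inside $S_f\cap S_{f^*}$. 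Using the commutator identity $[f,a^*]=-[f^*,a]^*$ one checks that $S_f\cap S_{f^*}$ is stable under taking adjoints, hence is a genuine C$^*$-subalgebra of $\B(\cF_\anti)$.

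Finally, I would exploit that the prescribed generating set $\{\ell(x),\ell(x)^*,r(x),r(x)^*\mid x\in X\}$ is itself self-adjoint. Assuming $[f,a]\in\K(\cF_\anti)$ for every such $a$, applying this to $a^*$ (again in the set) and using $[f^*,a]=-[f,a^*]^*$ shows that $[f^*,a]\in\K(\cF_\anti)$ as well; thus the entire generating set lies in $S_f\cap S_{f^*}$. As $\cC_{\ell,r}$ is by definition the C$^*$-algebra generated by this set, we conclude $\cC_{\ell,r}\subset S_f\cap S_{f^*}\subset S_f$, which is precisely the assertion $f\in C(\overline{\sfZ})$. The only genuine obstacle is the bookkeeping forced by a non-self-adjoint $f$, and the device of passing to $S_f\cap S_{f^*}$ resolves it cleanly.
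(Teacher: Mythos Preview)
Your proof is correct and follows essentially the same approach as the paper: part 1 by testing on wedge vectors via Proposition \ref{prop-embedding}, and part 2 by the Leibniz rule together with the ideal property of $\K(\cF_\anti)$. Your extra bookkeeping via $S_f\cap S_{f^*}$ is harmless but unnecessary: since the generating set $\{\ell(x),\ell(x)^*,r(x),r(x)^*\}$ is self-adjoint as a set, the norm-closed \emph{algebra} it generates is already the C$^*$-algebra $\cC_{\ell,r}$, so the norm-closed subalgebra $S_f$ alone (no $*$-closure needed) contains $\cC_{\ell,r}$.
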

\begin{proof}
	1. For any distinct $x_1,\ldots,x_n\in X$, 
\begin{align*}
	f\, \pi_g^\anti (\delta_{x_1}\wedge \cdots \wedge \delta_{x_n} )
	&=f \, (\delta_{g\cdot x_1}\wedge \cdots \wedge  \delta_{g\cdot x_n} )\\
	&=f([g\cdot x_1,\ldots,g\cdot x_n]) (\delta_{g\cdot x_1}\wedge \cdots \wedge  \delta_{g\cdot x_n} )\\
	&=(g^{-1}\cdot f)([x_1,\ldots,x_n]) \pi_g^\anti(\delta_{x_1}\wedge \cdots \wedge  \delta_{ x_n} )\\
	&=\pi_g^\anti(g^{-1}\cdot f)\, (\delta_{x_1}\wedge \cdots \wedge  \delta_{ x_n} ).
\end{align*}
This means $\pi_g^{\anti *}f\pi_g^\anti = g^{-1}\cdot f$.

	2. Assume that $[f,a]\in\K(\cF_\anti)$ for $a=\ell(x),\ell(x)^*,r(x),r(x)^*$ for $x\in X$. Then since $\K(\cF_\anti)$ is an ideal in $\B(\cF_\anti)$, the same holds if $a$ is a product of such elements. Then it is easy to see that $[f,a]\in\K(\cF_\anti)$ for all $a\in \cC_{\ell,r}$.
\end{proof}

\begin{lem}\label{lem-action-boundary2}
	The action $ G  \act \ell^\infty(\sfZ)$ preserves $C(\overline{\sfZ})$, hence it induces an action $ G  \act C(\partial \sfZ)$.
\end{lem}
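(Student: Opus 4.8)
The plan is to exploit that each unitary $\pi_g^\anti$ normalizes the creation algebra $\cC_{\ell,r}$, combined with the fact that conjugation by a unitary preserves the compacts and that $c_0(\sfZ)$ is invariant under a permutation action. By Lemma \ref{lem-action-boundary1}.1 the action is spatially implemented, $g\cdot f = \pi_g^\anti f \pi_g^{\anti *}$, so invariance of $C(\overline{\sfZ})$ should reduce to a routine normalization argument.

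First I would record the normalization. Applying the transformation rules $U^\anti \ell(\xi) U^{\anti *}=\ell(U\xi)$ and $U^\anti r(\xi)U^{\anti *}=r(U\xi)$ from Subsection \ref{Fock spaces and associated von Neumann algebras} to $U=\pi_g$, one gets $\pi_g^\anti \ell(x)\pi_g^{\anti *}=\ell(g\cdot x)$ and $\pi_g^\anti r(x)\pi_g^{\anti *}=r(g\cdot x)$ for all $x\in X$. Since these lie again in $\cC_{\ell,r}$ and the same holds for $g^{-1}$, it follows that $\pi_g^\anti \cC_{\ell,r}\pi_g^{\anti *}=\cC_{\ell,r}$; in particular $\pi_g^{\anti *}$ also normalizes $\cC_{\ell,r}$.

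Next, fix $f\in C(\overline{\sfZ})$ and $g\in G$. For any $a\in \cC_{\ell,r}$ I would set $b:=\pi_g^{\anti *}a\,\pi_g^\anti$, which lies in $\cC_{\ell,r}$ by the normalization step. A direct computation using $g\cdot f=\pi_g^\anti f\pi_g^{\anti *}$ gives $\pi_g^{\anti *}[g\cdot f,a]\pi_g^\anti = [f,b]$, that is, $[g\cdot f,a]=\pi_g^\anti [f,b]\pi_g^{\anti *}$. Since $f\in C(\overline{\sfZ})$ and $b\in \cC_{\ell,r}$, the commutator $[f,b]$ lies in $\K(\cF_\anti)$; and because conjugation by the unitary $\pi_g^\anti$ preserves $\K(\cF_\anti)$, so does $[g\cdot f,a]$. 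As $a\in\cC_{\ell,r}$ was arbitrary, this yields $g\cdot f\in C(\overline{\sfZ})$, so $C(\overline{\sfZ})$ is invariant.

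Finally, since $G\act \sfZ$ is an action by bijections, $g\cdot f=f\circ g^{-1}$ maps $c_0(\sfZ)$ onto itself (the level sets of $g\cdot f$ are the images under $g$ of those of $f$, hence finite when $f\in c_0(\sfZ)$). Therefore the action descends to the quotient $C(\partial \sfZ)=C(\overline{\sfZ})/c_0(\sfZ)$. I do not expect a genuine obstacle: the only point requiring care is the normalization identity $\pi_g^\anti \cC_{\ell,r}\pi_g^{\anti *}=\cC_{\ell,r}$, which is immediate from the creation-operator transformation rules, and everything else is formal manipulation of commutators together with the ideal property of $\K(\cF_\anti)$.
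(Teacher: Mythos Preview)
Your proof is correct and follows essentially the same approach as the paper: both use $g\cdot f=\pi_g^\anti f\pi_g^{\anti *}$ and the identity $[g\cdot f,a]=\pi_g^\anti[f,\pi_g^{\anti *}a\pi_g^\anti]\pi_g^{\anti *}$ together with the fact that $\pi_g^\anti$ normalizes the creation operators. The only cosmetic difference is that the paper checks the commutator condition on the generators $\ell(x),\ell(x)^*,r(x),r(x)^*$ and invokes Lemma~\ref{lem-action-boundary1}.2, whereas you argue directly with the full algebra $\cC_{\ell,r}$ via the normalization $\pi_g^\anti\cC_{\ell,r}\pi_g^{\anti *}=\cC_{\ell,r}$.
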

\begin{proof}
For any $x\in X$, $g\in  G $, $f\in C(\overline{\sfZ})$,
\begin{align*}
	[\ell(x),g\cdot f]
	=[\ell(x),\pi_g^\anti f \pi_g^{\anti *}]
	&= \pi_g^\anti[\pi_g^{\anti *}\ell(x)\pi_g^\anti, f ]\pi_g^{\anti *}\\
	&= \pi_g[\ell(g^{-1}\cdot x), f] \pi_g^*\in \K(\cF_\anti). 
\end{align*}
Similar computations yield $[a,g\cdot f]\in \K(\cF_\anti)$, where $a=\ell(x)^*,r(x)$ and $r(x)^*$. By Lemma \ref{lem-action-boundary1}, we get $g\cdot f\in C(\overline{\sfZ})$.
\end{proof}

\subsection{Boundary amenability}

We prove a boundary amenability result. More precisely we prove the following.

\begin{thm}\label{thm-boundary-amenability}
	Let $ G $ be a countable discrete group acting on a countable set $X$.  Assume that $ G $ is exact and the action has finite stabilizers and finitely many orbits. Then the action $ G  \act C(\partial \sfZ)$ given in Lemma \ref{lem-action-boundary2} is amenable. 
\end{thm}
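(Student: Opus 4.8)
The plan is to verify the standard dynamical characterization of amenability of an action on a commutative C$^*$-algebra: by \cite[Subsection 4.4]{BO08} it suffices to produce a net of weak-$*$ continuous maps $m_i\colon \partial\sfZ\to\Prob(G)$ with vanishing drift, i.e. $\sup_{\omega\in\partial\sfZ}\|g\cdot m_i(\omega)-m_i(g\cdot\omega)\|_1\to 0$ for every $g\in G$. Since a weak-$*$ continuous map into $\Prob(G)\subset\ell^1(G)$ is the same datum as a family of coordinate functions $\omega\mapsto m_i(\omega)(s)$ lying in $C(\partial\sfZ)=C(\overline{\sfZ})/c_0(\sfZ)$, I would instead build functions $\xi_i\colon\sfZ\to\Prob(G)$ on the discrete set whose coordinate functions $z\mapsto\xi_i(z)(s)$ lie in $C(\overline{\sfZ})$ and whose drift $\sup_{z}\|g\cdot\xi_i(z)-\xi_i(g\cdot z)\|_1$ tends to $0$; passing to the quotient by $c_0(\sfZ)$ then yields the required $m_i$.

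The first input is exactness. Because the action has finitely many orbits and finite stabilizers, $X$ is $G$-equivariantly a finite disjoint union of coset spaces $G/H_j$ with each $H_j$ finite, hence commensurable to a finite union of copies of the regular $G$-set. Exactness of $G$, i.e. amenability of $G\act\beta G$ \cite[Section 5]{BO08}, therefore yields amenability of $G\act\beta X$: a net of finitely supported, finite-propagation maps $\mu_i\colon X\to\Prob(G)$ with $\sup_{x\in X}\|g\cdot\mu_i(x)-\mu_i(g\cdot x)\|_1\to 0$. These $\mu_i$ are the building blocks, and on the stratum $Z_1=X$ of $\sfZ$ they already account for the corona $\partial X\subset\partial\sfZ$.

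Next I would aggregate the $\mu_i$ over a configuration. Identifying a point of $\sfZ$ with a finite subset $F=\{x_1,\dots,x_n\}\subset X$, the natural candidate is the average $\xi_i(F)=\tfrac1{|F|}\sum_{x\in F}\mu_i(x)$. Its drift is immediately controlled, since
\[
\|g\cdot\xi_i(F)-\xi_i(g\cdot F)\|_1\le \frac1{|F|}\sum_{x\in F}\|g\cdot\mu_i(x)-\mu_i(g\cdot x)\|_1\le \sup_{x\in X}\|g\cdot\mu_i(x)-\mu_i(g\cdot x)\|_1,
\]
which is the easy, purely equivariant half of the argument. The substantive requirement is that each coordinate $f(F)=\xi_i(F)(s)$ lie in $C(\overline{\sfZ})$. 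By Lemma \ref{lem-action-boundary1} this reduces to compactness of $[\iota(f),\ell(\delta_y)]$ and $[\iota(f),r(\delta_y)]$ for every $y\in X$, where $\iota$ is the embedding of Proposition \ref{prop-embedding}. A direct computation on the simple wedges $\delta_{x_1}\wedge\cdots\wedge\delta_{x_n}$ (distinct $x_j$) shows that $[\iota(f),\ell(\delta_y)]$ is the weighted combinatorial shift sending the vector indexed by $F$ (with $y\notin F$) to $\bigl(f(F\cup\{y\})-f(F)\bigr)$ times the vector indexed by $F\cup\{y\}$; hence the commutator is compact if and only if the increment $f(F\cup\{y\})-f(F)$ tends to $0$ as $F\to\infty$ in $\sfZ$, for each fixed $y$ (the right creation operator gives the same condition).

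The main obstacle is exactly this increment condition. For the uniform average one computes $f(F\cup\{y\})-f(F)=\tfrac1{|F|+1}\bigl(\mu_i(y)(s)-f(F)\bigr)$, which is $O(1/|F|)$ and so vanishes along configurations whose cardinality grows, but does \emph{not} vanish along sequences of fixed cardinality whose points escape to infinity in $X$ --- precisely the directions recording the corona $\partial X$. Thus naive averaging fails to land in $C(\overline{\sfZ})$, and the real work is to replace it by an aggregation that is slowly varying under adding a point \emph{simultaneously on every cardinality stratum}. I expect this to require coupling the weights to the combinatorial add-a-point structure --- for instance by telescoping the $\mu_i$ across cardinalities, or by upgrading the $Z_1$-level amenability through the anti-symmetric Fock functor --- and it is here that exactness, together with the finite propagation of the $\mu_i$, must be used in full. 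Once the coordinate functions are shown to satisfy the increment condition while retaining the drift bound above, passing to the quotient $C(\partial\sfZ)$ and invoking the characterization of \cite{BO08} completes the proof.
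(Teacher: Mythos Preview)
Your diagnosis is accurate: the uniform average $\xi_i(F)=\tfrac1{|F|}\sum_{x\in F}\mu_i(x)$ fails the increment test along strata of fixed cardinality, and you correctly isolate this as the crux. But the proposal stops precisely there --- you announce that ``the real work'' is to repair the weighting and then speculate about telescoping or Fock-functoriality without carrying anything out. As written this is not a proof; the gap is exactly the step you flag.

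The paper resolves the obstacle by two changes. First, it maps $\sfZ$ into $\Prob(X)$ rather than $\Prob(G)$: one builds a single ucp map $\mu^*\colon\ell^\infty(X)\to \ell^\infty(\sfZ)$ that lands in $C(\overline{\sfZ})$ and is $G$-equivariant modulo $c_0(\sfZ)$, and then invokes amenability of $G\act\ell^\infty(X)$ (exactness plus the orbit/stabilizer hypotheses) to transfer amenability to $C(\partial\sfZ)$ along the quotient $Q\circ\mu^*$. This decouples the Fock-space geometry from exactness and removes the need for a net $\mu_i$ at this stage. Second --- and this is the missing idea --- the average over $F=\{x_1,\dots,x_n\}$ is weighted by a proper length function on $X$: with $|\cdot|_X$ as in Lemma~\ref{length function lemma} one sets
\[
\omega(F)=\sum_{i=1}^n\bigl(n+|x_i|_X\bigr)\,\delta_{x_i},\qquad \mu(F)=\omega(F)/\|\omega(F)\|_1,
\]
so that $\|\omega(F)\|_1=n^2+\sum_i|x_i|_X$. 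This denominator tends to infinity along \emph{every} escaping sequence in $\sfZ$ (Lemma~\ref{length-convergence-lem}), while adding a single point $y$ perturbs the numerator by $O(n+|y|_X)$ and acting by $g$ perturbs it by $O(n\,|g|_G)$. Both ratios therefore vanish at infinity, which gives simultaneously the increment condition (hence $\mu^*(\ell^\infty(X))\subset C(\overline{\sfZ})$) and approximate $G$-equivariance modulo $c_0(\sfZ)$. Your suggestions of telescoping across cardinalities or pushing amenability through the Fock functor are not what is needed; the fix is this length-weighted barycentre, and the $n^2$ term it produces in the normalization is exactly what kills the increment on the fixed-cardinality strata that broke your uniform average.
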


Our proof is inspired by Ozawa's work on Bernoulli actions \cite[Proposition 4.4]{Oz04}\cite[Corollary 15.3.9]{BO08}. We use a similar idea but in a different context.

We need several preparations. Recall that any countable group $ G $ with any finite subgroup $\Lambda$ admits a proper length function $| \cdot | \colon  G /\Lambda \to \R_{\geq 0}$ satisfying
\begin{itemize}
	\item[$\rm (i)$] $|g\Lambda| =0 \Leftrightarrow g \in \Lambda$; 
	\item[$\rm (ii)$] $|gh\Lambda|\leq |g\Lambda|+  |h\Lambda|$;
	\item[$\rm (iii)$] (symmetric) $|g^{-1}\Lambda| = |g\Lambda|$;
	\item[$\rm (iv)$] (proper) $\{g\Lambda \mid |g\Lambda| \leq R \}$ is finite for all $R> 0$.
\end{itemize}
(See \cite[Proposition 5.5.2]{BO08}.) We use the following functions.

\begin{lem}\label{length function lemma}
	Assume that $ G $ is countable and $ G  \act X$ has finite stabilizers and finitely many orbits. Then there is a function $| \cdot |_X\colon  G /\Lambda \to \R_{\geq 0}$, a finite subgroup $\Lambda \leq  G $, and a proper symmetric length function $|\cdot|_{ G /\Lambda}\colon  G /\Lambda \to \R_{\geq 0}$ such that 
\begin{itemize}
	\item[$\rm (i)$] $|g\cdot x|_X\leq |g\Lambda|_{ G /\Lambda} + |x|_X$;
	\item[$\rm (ii)$] $\{x\in X \mid |x|_X \leq R \}$ is finite for all $R> 0$.
\end{itemize}
\end{lem}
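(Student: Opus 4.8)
The plan is to transport a proper length function from the group onto each orbit and glue the pieces together, using that there are only finitely many orbits. Since $G \act X$ has finitely many orbits, I would first fix representatives $x_1,\dots,x_k$ with $X=\bigsqcup_{i=1}^{k} G\cdot x_i$, and write $\Lambda_i:=\{g\in G: g\cdot x_i=x_i\}$ for the stabilizers, which are finite by hypothesis. For the finite subgroup required in the statement I would simply take $\Lambda:=\{e\}$, and apply the recalled existence result (with trivial subgroup) to obtain a proper symmetric length function $|\cdot|_{G/\Lambda}=|\cdot|_{G}$ on $G=G/\{e\}$. I read $|\cdot|_X$ in the statement as a function on $X$ (this is what conditions (i) and (ii) require), and define it by
$$ |x|_X:=\min\{\,|g|_{G}\ :\ g\in G,\ g\cdot x_i=x\,\}\qquad\text{for }x\in G\cdot x_i. $$
The minimizing set is a single coset $g_x\Lambda_i$, finite since $\Lambda_i$ is finite, so the minimum is attained and $|\cdot|_X\colon X\to\R_{\geq 0}$ is well defined.

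The key step is inequality (i). Given $x\in G\cdot x_i$ and $g\in G$, the point $g\cdot x$ lies in the same orbit; picking $k\in G$ with $k\cdot x_i=x$ and $|k|_{G}=|x|_X$, the element $gk$ satisfies $(gk)\cdot x_i=g\cdot x$, so by definition of $|\cdot|_X$ together with subadditivity of the length function,
$$ |g\cdot x|_X\ \leq\ |gk|_{G}\ \leq\ |g|_{G}+|k|_{G}\ =\ |g\Lambda|_{G/\Lambda}+|x|_X, $$
which is exactly (i). For the properness condition (ii) I would argue orbit by orbit: if $x\in G\cdot x_i$ has $|x|_X\leq R$, then some $g$ with $g\cdot x_i=x$ lies in the ball $\{g\in G:|g|_G\leq R\}$, and distinct such $x$ arise from distinct cosets $g\Lambda_i$; hence the assignment $x\mapsto g$ is injective into that ball, so the number of such $x$ is bounded by its cardinality, which is finite by properness of $|\cdot|_G$. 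Summing over the finitely many orbits yields that $\{x\in X:|x|_X\leq R\}$ is finite.

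I expect this lemma to be essentially routine, and the only point needing care is the bookkeeping that lets properness of $|\cdot|_G$ descend to properness of $|\cdot|_X$: this is precisely where the assumption of finitely many orbits is used, while finite stabilizers only guarantee that the defining minimum is over a finite (hence attained) set. If a nontrivial $\Lambda$ adapted to the action were preferred, the same construction goes through verbatim with $\Lambda:=\bigcap_{i}\Lambda_i$ and the canonical quotient maps $G/\Lambda\to G/\Lambda_i\cong G\cdot x_i$ replacing the orbit maps, but the trivial choice already suffices.
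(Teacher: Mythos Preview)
Your argument is correct and follows essentially the same idea as the paper: transport a proper length function from the group to each of the finitely many orbits via the orbit maps. The paper's bookkeeping is marginally different---it sets $\Lambda:=\bigcap_i\Lambda_i$, takes a separate proper length function $|\cdot|_i$ on each $G/\Lambda_i$, puts $|x|_X:=|x|_i$ on the $i$-th orbit, and defines $|g\Lambda|_{G/\Lambda}:=\max_i|g\Lambda_i|_i$---whereas you take $\Lambda=\{e\}$ and push a single length on $G$ forward by minimizing over the fiber. Both choices work for the same reason, and you already note the alternative at the end. One small remark: your claim that finite stabilizers are needed to make the minimum attained is not quite the point---properness of $|\cdot|_G$ already forces the minimum to be attained over any nonempty coset; finite stabilizers are rather what make the orbit maps $G\to G\cdot x_i$ finite-to-one, which is implicitly used in the paper's version when invoking proper length functions on $G/\Lambda_i$.
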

\begin{proof}
By assumption, there exist finite subgroups $\Lambda_1,\ldots,\Lambda_n\leq  G $ such that $X \simeq \bigsqcup_{i=1}^n  G  /\Lambda_i$. Put $\Lambda:=\bigcap_i\Lambda_i \leq  G $. 
For each $i$, take any length function $|\cdot|_i$ for $ G /\Lambda_i$. Define $|\cdot|_X$ by $|x|_X:=|x|_{i}$ for $x\in  G /\Lambda_i\subset X$. Observe that $|g\Lambda|_{ G /\Lambda}:=\max_{i=1}^n|g\Lambda_i|_i$ defines a length function on $ G /\Lambda$. Then it is easy to check the conditions.
\end{proof}

Fix $ G  \act X$ as in the statement of Theorem \ref{thm-boundary-amenability} and we keep the notation from previous subsections. We fix $|\cdot|_X$ and $|\cdot|_{ G }(:=|\cdot|_{ G /\Lambda})$ in Lemma \ref{length function lemma}. 

Define two well defined functions on $\sfZ$: for any $z =[x_1,\dots,x_n] \in Z_n\subset \sfZ$,
	$$|z|_0 :=n,\quad |z|_1 := \sum_{i=1}^n|x_i|_X,$$
and $|\star|_0=|\star|_1 = 0$. We use them with the following elementary lemma.

\begin{lem}\label{length-convergence-lem}
	Let $(z_\lambda)_\lambda$ be any net in $\mathsf{X}_\anti$. Then we have
	$$ z_\lambda \to \infty \quad \Leftrightarrow \quad \lim_\lambda |z_\lambda|_0 + |z_\lambda|_1 = \infty.$$
\end{lem}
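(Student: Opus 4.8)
The plan is to reduce both implications to a single finiteness statement: for each $R>0$, the sublevel set
$$ S_R := \{ z\in \sfZ \mid |z|_0 + |z|_1 \leq R \} $$
is a finite subset of $\sfZ$, and the family $\{S_R\}_{R>0}$ is cofinal among all finite subsets of $\sfZ$ ordered by inclusion. Once this is established, the equivalence follows formally from the definition of $z_\lambda\to\infty$, namely that the net eventually leaves every finite subset of $\sfZ$.

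First I would prove that $S_R$ is finite. Suppose $z=[x_1,\dots,x_n]\in Z_n$ satisfies $|z|_0+|z|_1\leq R$. On the one hand $n=|z|_0\leq R$, so only finitely many levels $Z_n$ can contribute. On the other hand, since the quantities $|x_i|_X$ are nonnegative and $\sum_{i=1}^n |x_i|_X = |z|_1 \leq R$, each representative $x_i$ lies in the set $\{x\in X\mid |x|_X\leq R\}$, which is finite by Lemma \ref{length function lemma}(ii). Hence for each admissible $n\leq R$ there are only finitely many tuples $(x_1,\dots,x_n)$ with distinct entries drawn from this fixed finite set, and a fortiori only finitely many classes $[x_1,\dots,x_n]$ in $Z_n$. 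Summing over the finitely many $n\leq R$, and adjoining the point $\star$, shows $S_R$ is finite.

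Next I would observe the cofinality: every finite subset $F\subset\sfZ$ is contained in some $S_R$, namely the one with $R:=\max_{z\in F}(|z|_0+|z|_1)$, while each $S_R$ is itself finite by the previous step. With this in hand the equivalence is immediate. For the forward implication, fix $R>0$; since $S_R$ is finite, the hypothesis $z_\lambda\to\infty$ forces $z_\lambda\notin S_R$ eventually, i.e.\ $|z_\lambda|_0+|z_\lambda|_1>R$ eventually, and as $R$ was arbitrary this yields $\lim_\lambda(|z_\lambda|_0+|z_\lambda|_1)=\infty$. For the converse, given any finite $F\subset\sfZ$ choose $R$ with $F\subseteq S_R$; then $|z_\lambda|_0+|z_\lambda|_1\to\infty$ guarantees $z_\lambda\notin S_R$, hence $z_\lambda\notin F$, eventually, so $z_\lambda\to\infty$.

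The only point requiring genuine care is the finiteness of $S_R$, which is exactly where the properness property (ii) of the length function $|\cdot|_X$ is used; everything else is a formal unwinding of the definition of a net converging to infinity in the discrete set $\sfZ$. I do not expect any serious obstacle beyond bookkeeping the two separate bounds $|z|_0\leq R$ and $|z|_1\leq R$ simultaneously.
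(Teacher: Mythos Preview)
Your proposal is correct and follows essentially the same approach as the paper: the core step in both is the finiteness of the sublevel sets $S_R$, proved by bounding $n=|z|_0\leq R$ and using properness of $|\cdot|_X$ to trap each $x_i$ in a finite ball. The only cosmetic difference is that you handle the $(\Leftarrow)$ direction via the cofinality of $\{S_R\}$ among finite subsets, whereas the paper argues it by contrapositive using a subnet trapped in a finite set; these are equivalent formulations of the same idea.
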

\begin{proof}
	$(\Rightarrow)$ Fix $R>0$ and we show that the set of all $z\in \sfZ$ satisfying $|z|_0+|z|_1\leq R$ is finite. Fix such $z$ and write $z=[x_1,\ldots,x_n]$. Note that $n=|z|_0\leq R$. Since $|x_i|\leq |z|_1\leq R$, $x_i$ is contained in $B_R(X):=\{x\in X\mid |x|_X\leq R\}$, which is a finite set. Thus any representative $(x_1,\ldots,x_n)$ of $z$ is contained in a finite set $B_R(X)^n$, so that such $z$ is contained in a finite set which depends only on $R$. 

	$(\Leftarrow)$ If $z_\lambda \not \to \infty$,  then a subnet $\{z_{\lambda'}\}_{\lambda'}$ is contained in a finite subset in $\sfZ$, which can not satisfy $\lim_{\lambda'} |z_{\lambda'}|_0 + |z_{\lambda'}|_1 = \infty$. 
\end{proof}

Define a map $\omega \colon \sfZ \to \ell^1(X)^+$ by 
\[
\omega (z) := \sum_{i=1}^n (|z|_0 + |x_{i}|_X) \delta_{x_i} \quad \text{for} \quad z=[x_1,\ldots,x_n] \in Z_n\subset \sfZ,
\]
where $\omega (\star)$ is any positive value. Note that 
	$$ \|\omega(z) \|_1 = \sum_{i=1}^n |z|_0 + |x_{i}|_X = |z|_0^2 + |z|_1.$$
Up to normalizing, we define 
\[
	\mu\colon \sfZ \to \Prob(X);\quad \mu (z) := \frac{\omega(z)}{\|\omega(z)\|_1}.
\]
We can induce the following ucp map
\[
	\mu^* \colon \ell^\infty (X) \to \ell^\infty (\sfZ);\quad f\mapsto [\sfZ \ni z \mapsto \langle f, \mu(z)\rangle ],
\]
where $ \langle f, \mu(z)\rangle $ is the paring by $\ell^1(X)^*=\ell^\infty(X)$. The next lemma is the main step to prove Theorem \ref{thm-boundary-amenability}. 

\begin{lem}\label{lem-boundary-amenability}
	The following conditions hold true.
\begin{enumerate}
	\item For any $g \in  G $ and $\varphi \in \ell^\infty (X)$, we have
	$$\mu^* (g \cdot \varphi ) - g\cdot \mu^*( \varphi ) \in c_0(\sfZ).$$

	\item For any $x\in X$ and $\varphi\in \ell^\infty(X)$, we have
\begin{align*}
	&[\mu^*(\varphi),\ell(x)],\ [\mu^*(\varphi),r(x)]\in \K(\cF_\anti).
\end{align*}

\end{enumerate}
\end{lem}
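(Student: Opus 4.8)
The plan is to reduce both assertions to $\ell^1$-estimates expressing that the probability measures $\mu(z)\in\Prob(X)$ are \emph{almost invariant}: almost invariant under the $G$-action for part~1, and almost unchanged when one extra point is adjoined to $z$ for part~2. In both cases the resulting error will be dominated by a nonnegative function on $\sfZ$ lying in $c_0(\sfZ)$, which one recognizes via Lemma~\ref{length-convergence-lem}. The crucial structural feature forcing the errors to vanish is that the normalization $\|\omega(z)\|_1=|z|_0^2+|z|_1$ grows \emph{quadratically} in $n=|z|_0$, so that ratios of the form $n/\|\omega(z)\|_1$ define $c_0(\sfZ)$-functions.

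For part~1, I would first rewrite the two functions as pairings against measures on $X$. Writing $(g^{-1})_*\mu(z)$ for the pushforward of $\mu(z)$ under $g^{-1}$, a change of variables gives $\mu^*(g\cdot\varphi)(z)=\langle\varphi,(g^{-1})_*\mu(z)\rangle$, while by definition $(g\cdot\mu^*(\varphi))(z)=\langle\varphi,\mu(g^{-1}z)\rangle$. Hence the difference at $z$ is bounded by $\|\varphi\|_\infty\,\|(g^{-1})_*\mu(z)-\mu(g^{-1}z)\|_1$, and it suffices to place this $\ell^1$-error in $c_0(\sfZ)$. For $z=[x_1,\dots,x_n]$ both measures are supported on the distinct points $g^{-1}x_1,\dots,g^{-1}x_n$, with weights $\tfrac{n+|x_i|_X}{\|\omega(z)\|_1}$ and $\tfrac{n+|g^{-1}x_i|_X}{\|\omega(g^{-1}z)\|_1}$ respectively. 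The quasi-invariance of the length function, namely $\bigl||x_i|_X-|g^{-1}x_i|_X\bigr|\le |g|_G$ from Lemma~\ref{length function lemma}(i) and symmetry, together with a routine triangle-inequality comparison of the two normalizations, yields a bound of the shape $2n|g|_G/\|\omega(z)\|_1$. Since $\|\omega(z)\|_1\ge n^2$, this function lies in $c_0(\sfZ)$ by Lemma~\ref{length-convergence-lem}, completing part~1.

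For part~2, I would first compute the commutator on the basis vectors $\xi_z:=\delta_{x_1}\wedge\cdots\wedge\delta_{x_n}$ (distinct $x_i$). Since $\iota(\mu^*(\varphi))$ acts diagonally with eigenvalue $\mu^*(\varphi)(z)$ on $\xi_z$, and $\ell(x)\xi_z$ is a scalar multiple of the basis vector labelled by $[x,x_1,\dots,x_n]$, one obtains the clean factorization $[\mu^*(\varphi),\ell(x)]=\ell(x)\,\iota(c)$, where $c\in\ell^\infty(\sfZ)$ is given by $c(z):=\mu^*(\varphi)([x,x_1,\dots,x_n])-\mu^*(\varphi)(z)$ when $x\notin\{x_1,\dots,x_n\}$ and $c(z):=0$ otherwise (on the latter vectors $\ell(x)\xi_z=0$, so the identity persists). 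It then remains only to show $c\in c_0(\sfZ)$: granting this, $\iota(c)\in\K(\cF_\anti)$ by Proposition~\ref{prop-embedding}, so $[\mu^*(\varphi),\ell(x)]=\ell(x)\iota(c)$ is compact as a product of a bounded and a compact operator. To bound $c$ I would use $|c(z)|\le\|\varphi\|_\infty\,\|\mu([x,x_1,\dots,x_n])-\mu(z)\|_1$ and compare the $(n{+}1)$-point measure with the $n$-point measure by separating the new $\delta_x$ term from the perturbed weights; this gives a bound of the form $(4n+2+2|x|_X)/\|\omega([x,x_1,\dots,x_n])\|_1$, again $c_0(\sfZ)$ by the quadratic normalization. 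The operator $r(x)$ is identical: $r(x)\xi_z$ carries the same label $[x,x_1,\dots,x_n]$ (only the sign of the wedge differs, which $\iota(\mu^*(\varphi))$ does not see), so $[\mu^*(\varphi),r(x)]=r(x)\iota(c)$ with the same $c$.

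The main obstacle is the bookkeeping in the two $\ell^1$-estimates and, above all, verifying that the resulting error functions genuinely lie in $c_0(\sfZ)$. This hinges entirely on the specific weight $\omega(z)=\sum_i(|z|_0+|x_i|_X)\delta_{x_i}$: the summand $|z|_0=n$ forces the quadratic normalization $\|\omega(z)\|_1=n^2+|z|_1$, which is exactly what renders the linear-in-$n$ numerators negligible as $z\to\infty$. Once this is in place, the reduction to Lemma~\ref{length-convergence-lem} (characterizing $c_0(\sfZ)$ via $|z|_0+|z|_1\to\infty$) is immediate, and the factorization in part~2 packages everything cleanly through Proposition~\ref{prop-embedding}.
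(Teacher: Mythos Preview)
Your proposal is correct and follows essentially the same route as the paper: both parts are reduced to $\ell^1$-estimates on $\mu(z)$ that are controlled by the quadratic normalization $\|\omega(z)\|_1=|z|_0^2+|z|_1$, and in part~2 you obtain the same factorization $[\mu^*(\varphi),\ell(x)]=\ell(x)\,\iota(c)$ (the paper writes this as $\ell(x)(f-f([x,\cdot]))1_{Z_x}$) with the same bound $2(2|z|_0+1+|x|_X)/\|\omega(z)\|_1$. The only cosmetic differences are your choice of the denominator $\|\omega([x,z])\|_1$ in part~2 (harmless since $\|\omega([x,z])\|_1\ge\|\omega(z)\|_1$) and the sign/commutator conventions.
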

\begin{proof}
	1. Fix $g\in  G  $ and $\varphi \in \ell^\infty(X)$. For any $z\in \sfZ$, 
\begin{align*}
	 \mu^*( g\cdot \varphi )(z) -  g\cdot \mu^*( \varphi )(z) 
	= \langle g\cdot \varphi,\mu(z) \rangle - \langle \varphi,\mu(g^{-1}\cdot z) \rangle 
	= \langle \varphi,g^{-1} \cdot \mu(z) -\mu(g^{-1}\cdot z) \rangle .
\end{align*}
We have to show that this converges to $0$, as $z\to \infty$. For this, we prove
	$$\lim_{\sfZ\ni z\to \infty}\|\mu (g \cdot z) - g\cdot \mu(z) \|_1=0. $$

\begin{claim}
We have $ \|g\cdot \omega(z)-\omega(g\cdot z)\|_1\leq |z|_0|g|_ G  $ for any $z\in \sfZ$.
\end{claim}
\begin{proof}
	Write $z=[a_1,\ldots,a_n]\in Z_n$ and observe
\begin{align*}
	g\cdot \omega(z)
	=\sum_{i=1}^n (n + |a_{i}|_X) \delta_{g\cdot a_i} , \quad
	\omega(g\cdot z)
	=\sum_{i=1}^n (n + |g\cdot a_i|_X) \delta_{g\cdot a_i}.
\end{align*}
It follows that
\begin{align*}
	\|g\cdot \omega(z)-\omega(g\cdot z)\|_1
	&=\left\|\sum_{i=1}^n  (|a_{i}|_X - |g\cdot a_i|_X|) \delta_{g\cdot a_i}\right\|_1\\
	&\leq \sum_{i=1}^n  ||a_{i}|_X - |g\cdot a_i|_X||
	\leq \sum_{i=1}^n  |g|_ G  = n|g|_ G .
\end{align*}
\end{proof}
Now we compute that
\begin{align*}
	\|g\cdot\mu(z) -\mu(g\cdot z)\|_1 
	&= \left\|\frac{g\cdot\omega(z)}{\|\omega(z)\|_1} - \frac{\omega(g\cdot z)}{\|\omega(g\cdot z)\|_1}\right\|_1 \\
	&\leq \left\|\frac{g\cdot\omega(z)}{\|\omega(z)\|_1} - \frac{\omega(g\cdot z)}{\|\omega(z)\|_1}\right\|_1 +\left\|\frac{\omega(g\cdot z)}{\|\omega(z)\|_1} - \frac{\omega(g\cdot z)}{\|\omega(g\cdot z)\|_1}\right\|_1\\
	&\leq \frac{1}{\|\omega(z)\|_1}\left\|g\cdot\omega(z) - \omega(g\cdot z)\right\|_1 + \left|\frac{1}{\|\omega(z)\|_1} - \frac{1}{\|\omega(g\cdot z)\|_1}\right|\|\omega(g\cdot z)\|_1\\
	&\leq \frac{2}{\|\omega(z)\|_1}\left\|g\cdot\omega(z) - \omega(g\cdot z)\right\|_1 .
\end{align*}
Combined with the clam, we get
\begin{align*}
	\|g\cdot\mu(z) -\mu(g\cdot z)\|_1 
	\leq 2|g|_ G  \frac{|z|_0}{|z|_0^2+|z|_1}.
\end{align*}
If we consider $z=z_\lambda \to \infty$, then by Lemma \ref{length-convergence-lem}, this means $|z_\lambda|_0+|z_\lambda|_1\to \infty$. It is straightforward to see that the last term converges to $0$, as desired.

	2. We first see an elementary claim.
\begin{claim}
	Let $f\in \ell^\infty(\sfZ)$ and $x\in X$.
\begin{enumerate}
	\item We have $ f\ell(x)=\ell(x)f([x,\,\cdot \,])$  and $fr(x)=r(x)f([\,\cdot \,, x])$.
	\item Set $ Z_x:=\{ [x_1,\ldots,x_n]\in \sfZ \mid n\in \N,\  x_i\neq x \text{ for all }i\} $. Then we have 
	$$ \ell(x)f = \ell(x)f1_{Z_x},\quad r(x)f = r(x)f1_{Z_x}.$$
\end{enumerate}
\end{claim}
\begin{proof}
	1. It is straightforward by the equation in Proposition \ref{prop-embedding}.

	2. Since $f=f1_{Z_x}+f1_{Z_x^c}$, we have only to see $\ell(x)f1_{Z_x^c}=0 =r(x)f1_{Z_x^c}$. For any $n\in \N$ and distinct $x_1,\ldots,x_n\in X$, compute that
\begin{align*}
	\ell(x)f1_{Z_x^c}\, \delta_{x_1}\wedge \cdots \wedge \delta_{x_n}
	&= f([x_1,\ldots,x_n])1_{Z_x^c}([x_1,\ldots,x_n]) \ell(x)\, \delta_{x_1}\wedge \cdots \wedge \delta_{x_n}\\
	&=\sqrt{n+1}\,  f([x_1,\ldots,x_n])1_{Z_x^c}([x_1,\ldots,x_n])\,  \delta_{x}\wedge \delta_{x_1}\wedge \cdots \wedge \delta_{x_n}.
\end{align*}
If $x=x_i$ for some $i$, then $\delta_{x}\wedge \delta_{x_1}\wedge \cdots \wedge \delta_{x_n}=0$. If $x\neq x_i$ for all $i$, then $[x_1,\ldots,x_n]\in Z_x$ and $1_{Z_x^c}([x_1,\ldots,x_n])=0$. So in any case, the above term is $0$. This means $\ell(x)f1_{Z_x^c}=0$. A similar computation works for $r(x)f1_{Z_x^c}$.
\end{proof}

We fix $f:=\mu^*(\varphi)$ and $x\in X$ in the statement. By the claim,
\begin{align*}
	 [\ell(x) , f] = \ell(x)(f-f([x,\,\cdot \,]))1_{Z_x},\quad 
	 [r(x) , f] = r(x)(f-f([\,\cdot \, ,x])1_{Z_x}.
\end{align*}
Since $f([x,\,\cdot \,]) = f([\,\cdot \, , x])$, we have only to prove
	$$(f-f([x,\,\cdot \,]))1_{Z_x}\in c_0(\sfZ).$$
Let $z=[a_1,\ldots,a_n]\in Z_x$ and observe that
\begin{align*}
	f(z) -  f([x,a_1,\ldots,a_n]) 
	= \langle \varphi,\mu(z) \rangle - \langle \varphi,\mu([x,a_1,\ldots,a_n]) \rangle 
	= \langle \varphi,[\mu(z) -\mu([x,a_1,\ldots,a_n]) ]\rangle .
\end{align*}
We have to show that it converges to $0$ as $Z_x\ni z\to \infty$. We indeed prove
	$$\lim_{Z_x\ni z=[a_1,\ldots,a_n]\to \infty}\|\mu(z) -\mu([x,a_1,\ldots,a_n]) \|_1=0 .$$

\begin{claim}
	For any $z=[a_1,\ldots,a_n]\in Z_x$, it holds
	$$\|\omega(z) - \omega([x,a_1,\ldots,a_n])\|_1\leq 2|z|_0 + 1 + |x|_X.$$ 
\end{claim}
\begin{proof}
Recall $\omega(z)=\sum_{i=1}^{n} (n + |a_i|_X) \delta_{a_i}$ and observe
\begin{align*}
	\omega([x,a_1,\ldots,a_n])
	&= (n+1+|x|_X)\delta_x + \sum_{i=1}^{n} (n+1 + |a_i|_X) \delta_{a_i}.
\end{align*}
It follows that
\begin{align*}
	\|\omega(z)-\omega([x,a_1,\ldots,a_n])\|_1
	&= \left\|(n+1 + |x|_X) \delta_{x} + \sum_{i=1}^{n}  \delta_{a_i}\right\|_1\\
	&\leq n+1 + |x|_X + n
	= 2|z|_0+1 + |x|_X .
\end{align*}
\end{proof}
Now a computation similar to the proof of item 1 shows that (with $w:=[x,a_1,\ldots,a_n]$)
\begin{align*}
	\|\mu(z) -\mu(w)\|_1 
	&\leq \frac{2}{\|\omega(z)\|_1}\left\|\omega(z) - \omega(w)\right\|_1 \\
	&\leq \frac{2}{\|\omega(z)\|_1}(2|z|_0 + 1 + |x|_X) 
	= \frac{2}{|z|_0^2 + |z|_1}(2|z|_0 + 1 + |x|_X).
\end{align*}
It is straightforward to see that it converges to $0$ as $Z_x\ni z\to \infty$.
\end{proof}

\begin{proof}[Proof of Theorem \ref{thm-boundary-amenability}]
	Let $Q\colon \ell^\infty (\sfZ)\to \ell^\infty (\sfZ)/c_0(\sfZ)$ be the quotient map. Observe that, by taking the involution, the conclusion of item 2 in Lemma \ref{lem-boundary-amenability} implies $[\mu^*(\varphi),a]\in \K(\cF_\anti)$ for $\varphi\in \ell^\infty(X)$, $a=\ell(x)^*,r(x)^*$ and $x\in X$. Hence by item 2 in Lemma \ref{lem-action-boundary1}, $\mu^*(\ell^\infty(X))$ is contained in $C(\overline{\sfZ})$. 
We get a ucp map
	$$Q\circ\mu^* \colon \ell^\infty (X) \to C(\overline{\sfZ})/c_0(\sfZ)=C(\partial \sfZ).$$
Item 1 in Lemma \ref{lem-boundary-amenability} shows that $Q\circ\mu^*$ is $ G $-equivariant. Now since $ G $ is exact and $ G  \act X$ has finite stabilizers and finitely many orbits, the action $ G  \act \ell^\infty(X)$ is amenable. Since $Q\circ\mu^*$ is a $ G $-equivariant ucp from $\ell^\infty (X)$ to $C(\partial \sfZ)$, it follows that $ G  \act C(\partial \sfZ)$ is amenable.
\end{proof}

\section{Rigidity of non-singular Bernoulli actions}\label{Rigidity of non-singular Bernoulli actions}

In this section, we prove the main theorem. As we have already explained, we use anti-symmetric Fock spaces and boundary amenability obtained in the last section.

\subsection{Bernoulli actions via the CAR construction}\label{Non-singular Bernoulli actions via the CAR construction}

	We explain how to use the CAR construction to study nonsingular Bernoulli actions. For this we first recall the structure of $ \Gamma _\anti(H_\R,U)$ when $U\colon \R\act H_\R$ is almost periodic.

Let $X_0$ be any countable set and take any positive values $\mu_x>0$ for $x\in X_0$. Define a (complex) Hilbert space $H$ and a positive self adjoint operator $A$ on $H$ by
	$$H := \bigoplus_{x\in X_0} \C\eta_x\oplus \C I\eta_x,\quad A:=\bigoplus_{x\in X_0} \mu_x\oplus \mu_x^{-1},$$
where $\eta_x$ and $I\eta_x$ are symbols of unit vectors. Define a strongly continuous unitary representation $U\colon \R \act H$ by $U_t=A^{\ri t}$, that is,
	$$U_t\eta_i=\mu_i^{\ri t}\eta_i \quad \text{and}\quad U_tI\eta_i=\mu_i^{-\ri t}I\eta_i,\quad \text{for all }i\in X_0, \ t\in \R.$$
Define a conjugate linear isometry $I\colon H\to H$ by $\eta_i\mapsto I\eta_i$ and $I\eta_i\mapsto \eta_i$ for all $i\in X_0$. We consider the real structure $H_\R:=\{\xi\in H\mid I\xi = \xi\} \subset H$, and observe that $U$ acts on $H_\R$ since $I$ and $U_t$ commute for all $t\in \R$. In this way, for any given $\{\mu_x\}_{x\in X_0}$, we can construct an almost periodic representation $U\colon \R\act H_\R$ and associated operator algebras
	$$\mathrm{C}^*_\anti( H_\R,U )\subset  \Gamma_\anti(H_\R,U),\quad \text{with the vacuum state }\varphi.$$
We call $(H_\R,U)$ the \textit{almost periodic representation with eigenvalues $\{\mu_x\}_{x\in X_0}$}.

\subsection*{Identification $H=\ell^2(X)$}

Let $(H_\R,U)$ the almost periodic representation with eigenvalues $\{\mu_x\}_{x\in X_0}$ as constructed. 
Put $X:=X_0\sqcup IX_0$, where $IX_0:=\{Ix\mid x\in X_0\}$ with symbol $I$, and we identify $H=\ell^2(X)$ by 
	$$\eta_x = \delta_{x}\quad \text{and}\quad I\eta_x=\delta_{Ix}\quad \text{for }x\in X_0.$$ 
If we consider an order $2$ bijection $I\colon X\to X$ by $x\mapsto Ix$ and $Ix\mapsto x$ for $x\in X_0$, then the induced conjugate linear isometry $I\colon \ell^2(X)\to \ell^2(X)$ given by $If = \overline{f\circ I}$, coincides with the involution $I$ on $H$ above. 
It holds that $I\delta_x = \delta_{Ix}$ for all $x\in X$. We note that the real part $H_\R$ coincides with
	$$ \{f + \overline{f\circ I} \mid f\in \ell^2(X)\}$$
which is different from $\ell^2_\R(X)$.

\subsection*{Araki--Woods factors}

Let $(H_\R,U)$ the almost periodic representation with eigenvalues $\{\mu_x\}_{x\in X_0}$ and identify $H=\ell^2(X)$ for $X=X_0\sqcup IX_0$. Let $P\colon \ell^2(X)\to\ell^2(X_0)$ be the orthogonal projection. Then $P$ is a basis projection for the involution $I$, namely, $IPI=1-P$, and it commutes with $U_t$ for $t\in \R$. Hence, if we put
	$$c_x:=\frac{1}{\sqrt{2}}W(\widehat{\delta}_x)=\frac{1}{\sqrt{2}}(\ell(\widehat{\delta}_x)+\ell(\widehat{\delta}_{Ix})^*),\quad \text{for all }x\in X ,$$
then $\{c_x\}_{x\in X}$ gives a self dual CAR family. The restriction $\{c_x\}_{x\in X_0}$ to $X_0$ is a CAR family which generates $\mathrm{C}^*_\anti(H_\R,U)$ as a C$^*$-algebra. In particular, we have an UHF isomorphism $\mathrm{C}^*_\anti(H_\R,U)\simeq \bigotimes_{x\in X_0}\M_2$ as explained in Subsection \ref{Quasi-free states on CAR algebras}. Under this isomorphism, the vacuum state $\varphi$ on $\mathrm{C}^*_\anti(H_\R,U)$, which is the unique quasi-free state $\omega_{R(A)}$ for $R(A)\in \B(PH)$, coincides with the product state $\psi=\otimes_{x\in X_0}\psi_x$ on $\bigotimes_{x\in X_0}\M_2$ given by  for all $x\in X_0$,
	$$ \psi_x= \mathrm{tr}\left(\left[
\begin{array}{cc}
p_x&0  \\
0& q_x\\
\end{array}
\right]   \, \cdot \, \right),\quad \text{where }	p_x= \frac{1}{1+\mu_{x}^{-1}}\quad \text{and}\quad
	q_x=\frac{\mu_{x}^{-1}}{1+\mu_{x}^{-1}}.
$$
This means that the isomorphism extends to a state preserving isomorphism 
	$$( \Gamma_\anti(H_\R,U),\varphi) \simeq \overline{\bigotimes}_{x\in X_0}(\M_2,\psi_x). $$
In this way, we can identify $ \Gamma_\anti(H_\R,U)$ with an Araki--Woods factor. If we consider a product measure space
	$$(\Omega_\Ber,\mu) := \prod_{x\in X_0} (\{0,1\},p_x\delta_0 + q_x \delta_1 ),$$
then $L^\infty(\Omega_\Ber,\mu)$ is (diagonally) contained in the Araki--Woods factor. Under the isomorphism, this coincides with the embedding
	$$ \mathrm{W}^*\{ c_x^*c_x,c_xc_x^*\mid x\in X_0 \} \subset  \Gamma_\anti(H_\R,U).$$
In this way, for any product measure space $(\Omega,\mu)$ with parameter $p_x,q_x$ with $p_x+q_x=1$ for $x\in X_0$, $L^\infty(\Omega,\mu)$ is contained in some $( \Gamma_\anti(H_\R,U),\varphi)$ with $\varphi$-preserving conditional expectation.

\subsection*{Associated group actions}

We keep the notation and let $ G \act X_0$ be an action of a countable group $ G $. We extend it to $X=X_0\sqcup IX_0$ by $g\cdot (Ix):=I(g\cdot x)$ for $x\in X_0$. We denote by $\pi\colon  G  \act \ell^2(X)$ the associated unitary representation. Observe that
	$$ \pi_g \eta_x = \eta_{g\cdot x}\quad \text{and}\quad \pi_g I\eta_x = I\eta_{g\cdot x}\quad \text{for all }x\in X_0.$$
Since $\pi_g$ commutes with $I$ and $P$ for $g\in  G $, $\pi$ induces an action $\alpha\colon  G  \act \mathrm{C}^*_\anti(H_\R,U)$  by 
	$$\alpha_g\colon \mathrm{C}^*_\anti(H_\R,U) \ni c_x \mapsto  c_{g\cdot x}\in \mathrm{C}^*_\anti(H_\R,U),\quad \text{for all }x\in X_0.$$
As explained in Subsection \ref{Quasi-free states on CAR algebras}, $\alpha_g$ extends to $ \Gamma _\anti(H_\R,U)$ if and only if, for $R:=R(A)$ and $R_g:=R(\pi_g A\pi_g^{-1})$, $R^{1/2} - R_g^{1/2}$ and $(1-R)^{1/2}-(1-R_g)^{1/2}$ are Hilbert--Schmidt operators for all $g\in  G $. This turns out to be equivalent to Kakutani's condition
 for $p_x,q_x$, that is,
	$$\sum_{x\in X_0}\left(\sqrt{p_x} - \sqrt{p_{g\cdot x}}\right)^2 +\sum_{x\in X_0}\left(\sqrt{q_x} - \sqrt{q_{g\cdot x}}\right)^2<\infty,\quad \text{for all }g\in  G .$$
By the explicit form of $\alpha$, the action restricts to the nonsingular Bernoulli action on $\mathrm{W}^*\{c_x^*c_x,c_xc_x^*\mid x\in X_0\}\simeq L^\infty(\Omega_\Ber,\mu)$. It is worth mentioning that the action is \textit{not} the Bernoulli action on the Araki--Woods factor. 

\subsection*{Conclusion of this subsection}

Summarizing above observations, we obtain the following lemma. By this lemma, to prove our main theorem, we have only to show study actions on $ \Gamma_\anti(H_\R,U)$. 

\begin{lem}\label{lem-Bernoulli-embedding}
	Let $ G  \act X_0$ be any action of a countable discrete group on a countable set. Consider any product measure space 
	$$(\Omega_\Ber,\mu):=\prod_{x\in X_0} (\{0,1\},p_x\delta_0 + q_x \delta_1).$$
Assume that $p_x,q_x$ for $x\in X_0$ satisfies Kakutani's condition for $ G \act X_0$, so that the nonsingular (generalized) Bernoulli action is defined. 

For $x\in X_0$, take a unique $\mu_x>0$ such that $p_x=\frac{1}{1+\mu_x^{-1}}$, $q_x=\frac{\mu_x^{-1}}{1+\mu_x^{-1}}$ and consider the almost periodic representation $U\colon \R\act H_\R$ with eigenvalues $\{\mu_x\}_{x\in X_0}$. Then 
\begin{itemize}
	\item the action $ G  \act X_0$ induces $\alpha\in \Aut( \Gamma_\anti(H_\R,U))$ by $\alpha_g(W(\widehat{\delta}_x)) = W(\widehat{\delta}_{g\cdot x})$ for $g\in  G $ and $x\in X_0$;
	\item there is an inclusion $L^\infty(\Omega_\Ber,\mu)\subset  \Gamma _\anti(H_\R,U)$ with expectation $E$ such that $\mu\circ E$ coincides with the vacuum state and $\alpha$ restricts to the nonsingular Bernoulli action on $L^\infty(\Omega_\Ber,\mu)$.
\end{itemize}
In this case, $ G  \act (\Omega_\Ber,\mu)$ is solid if $\alpha$ is solid.
\end{lem}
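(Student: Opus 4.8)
The plan is to read off the first two bullet points directly from the identifications already made in this subsection, and then to reduce the final implication to the general descent principle for solid actions. For the first bullet, the automorphisms $\alpha_g$ are already defined on the CAR algebra $\mathrm{C}^*_\anti(H_\R,U)$ by $\alpha_g(c_x)=c_{g\cdot x}$, and the computation carried out above shows that Kakutani's condition for $\{p_x,q_x\}$ is exactly the Hilbert--Schmidt criterion from Subsection \ref{Quasi-free states on CAR algebras}, applied to $R=R(A)$ and $R_g=R(\pi_g A\pi_g^{-1})$. Hence each $\alpha_g$ extends to $\Gamma_\anti(H_\R,U)$, and since $c_x=\frac{1}{\sqrt 2}W(\widehat{\delta}_x)$ the extension satisfies $\alpha_g(W(\widehat{\delta}_x))=W(\widehat{\delta}_{g\cdot x})$; that $g\mapsto\alpha_g$ is a homomorphism into $\Aut(\Gamma_\anti(H_\R,U))$ follows from $\pi_g\pi_h=\pi_{gh}$.

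For the second bullet, the state-preserving isomorphism $(\Gamma_\anti(H_\R,U),\varphi)\simeq\overline{\bigotimes}_{x\in X_0}(\M_2,\psi_x)$ carries the diagonal subalgebra $\mathrm{W}^*\{c_x^*c_x,c_xc_x^*\mid x\in X_0\}$ onto $L^\infty(\Omega_\Ber,\mu)$, identified with the infinite tensor product of the diagonals $D_x\subset\M_2$. I would take $E$ to be the $\varphi$-preserving normal conditional expectation onto this diagonal, namely the infinite tensor product of the diagonal expectations $\M_2\to D_x$; then $\mu\circ E=\varphi$ by construction. Since each $\alpha_g$ merely permutes the tensor factors according to $g\cdot$, it commutes with this diagonal expectation, so $E$ is $G$-equivariant and $\alpha$ restricts on $L^\infty(\Omega_\Ber,\mu)$ to the nonsingular Bernoulli action, as recorded above.

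It remains to prove the implication. The key structural fact is that $E\colon\Gamma_\anti(H_\R,U)\to L^\infty(\Omega_\Ber,\mu)$ is a $G$-equivariant, state-preserving conditional expectation onto a globally $\alpha$-invariant subalgebra. Consequently $E$ promotes to a $G$-equivariant normal conditional expectation $\widetilde E\colon\Gamma_\anti(H_\R,U)\rtimes_\alpha G\to L^\infty(\Omega_\Ber,\mu)\rtimes G$ which is the identity on the copy of $G$; this exhibits the Bernoulli crossed product, together with the inclusion $L^\infty(\Omega_\Ber,\mu)\subset L^\infty(\Omega_\Ber,\mu)\rtimes G$, as a subalgebra-with-expectation of $\Gamma_\anti(H_\R,U)\rtimes_\alpha G$ in an expectation-compatible, $G$-equivariant way. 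I would then invoke the general theory of solid actions from Section \ref{Solidity for actions on von Neumann algebras}, by which solidity of an action descends along such a $G$-equivariant expectation onto a globally invariant subalgebra. Applied here, solidity of $\alpha$ on $\Gamma_\anti(H_\R,U)$ yields solidity of the restricted action on $L^\infty(\Omega_\Ber,\mu)$, which via the correspondence between subequivalence relations of the Bernoulli orbit relation and intermediate subalgebras of $L^\infty(\Omega_\Ber,\mu)\subset L^\infty(\Omega_\Ber,\mu)\rtimes G$ is precisely the asserted solidity of $G\act(\Omega_\Ber,\mu)$.

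The main obstacle is this descent step: solidity of a von Neumann algebra does not in general pass to a subalgebra, so the argument must exploit that solidity is formulated at the level of the action and is governed by the behaviour of subalgebras sitting inside the ambient crossed product $\Gamma_\anti(H_\R,U)\rtimes_\alpha G$, together with the fact that the subequivalence-relation subalgebras $L(\mathcal S)$ land inside this crossed product with compatible expectations. Checking that the definition of solid action in Section \ref{Solidity for actions on von Neumann algebras} is stable under restriction to $G$-invariant subalgebras with expectation, and that the resulting structural decomposition of each $L(\mathcal S)$ translates back into the measurable partition $\{X_n\}$ of $\Omega_\Ber$ demanded by the definition of solidity in the introduction, is where the real content lies; the Fock-space and quasi-free identifications feeding the first two bullets are essentially bookkeeping.
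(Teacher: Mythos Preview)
Your treatment of the first two bullets is correct and matches the paper: both follow directly from the identifications assembled earlier in Subsection~\ref{Non-singular Bernoulli actions via the CAR construction}.

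For the final implication you are also on the right track, but you significantly overstate the difficulty. The descent is immediate from the \emph{definition} of a solid action in Subsection~\ref{Definition and characterization}: given a diffuse $A\subset L^\infty(\Omega_\Ber,\mu)$ with expectation, one has $A\subset \Gamma_\anti(H_\R,U)$ with expectation (by composition with $E$), so solidity of $\alpha$ gives that $A'\cap(\Gamma_\anti(H_\R,U)\rtimes G)$ is amenable; the $G$-equivariant expectation $\widetilde E$ then restricts to an expectation onto $A'\cap(L^\infty(\Omega_\Ber,\mu)\rtimes G)$, which is therefore amenable as well. There is no obstacle here of the kind you describe in your last paragraph---solidity of \emph{actions} (unlike solidity of von Neumann algebras) passes to $G$-invariant subalgebras with expectation for this trivial reason. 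The translation from this von Neumann algebraic solidity to the measurable formulation (partitions $\{X_n\}$) is exactly Theorem~\ref{solid-thm}(2). The paper's proof accordingly consists of a single sentence: ``By this definition and Theorem~\ref{solid-thm}, the last statement is trivial.''
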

\begin{proof}
	As we have already explained, we can start from $(\Omega_\Ber,\mu)$ and then construct the associated $U\colon \R\act H_\R$. For the definition of solidity for general actions, see Subsection \ref{Definition and characterization}. By this definition and Theorem \ref{solid-thm}, the last statement is trivial.
\end{proof}

Throughout this section, we always use objects and notations above. Since they are complicated, for reader's convenience, we summarize our notations below.

\begin{notation}\upshape\label{notation}
	Let $ G  \act X_0$ be an action of a countable discrete group on a countable set. Consider a nonsingular Bernoulli action of $ G $ on $(\Omega_\Ber,\mu):=\prod_{x\in X_0} (\{0,1\},p_x\delta_0 + q_x \delta_1)$ that satisfies with Kakutani's condition.

\begin{itemize}

	\item (Almost periodic representation) Set $X:=X_0\sqcup IX_0$ and 
	$$H = \bigoplus_{x\in X_0} \C\eta_x\oplus \C I\eta_x = \ell^2(X),\quad \eta_x=\delta_x,\quad \eta_{Ix}=\delta_{Ix}\quad \text{for }x\in X_0.$$ 
Here $H$ has an involution $I$, and  $X$ has an order 2 bijection $I$, so that $If = \overline{f\circ I}$. The real part $H_\R\subset H$ is determined by $I$.
Set 
	$$A:=\bigoplus_{x\in X_0} \mu_x\oplus \mu_x^{-1},\quad  \text{on }H,\quad \text{where}\quad p_x= \frac{1}{1+\mu_{x}^{-1}},\quad q_x=\frac{\mu_{x}^{-1}}{1+\mu_{x}^{-1}}.$$
Define $U\colon \R \act H$ by $U_t:=A^{\ri t}$ for $t\in \R$. Since $U$ commutes with $I$, we get an almost periodic representation $U\colon \R\act H_\R$.

	\item (von Neumann algebra) We can define $\mathrm{C}^*_\anti(H_\R,U)\subset  \Gamma _\anti(H_\R,U)$ with the vacuum state $\varphi$. They are generated by 
\begin{align*}
	c_x=\frac{1}{\sqrt{2}}W(\widehat{\delta}_x)=\frac{1}{\sqrt{2}}(\ell(\widehat{\delta}_x)+\ell(\widehat{\delta}_{Ix})^*),\quad x\in X.
\end{align*}
The family $\{c_x\}_{x\in X_0}$ satisfies the CAR and we have an UHF isomorphism $\mathrm{C}^*_\anti(H_\R,U)\simeq \bigotimes_{x\in X_0}\M_2$. This sends $\varphi$ to the product state coming from $\mu$, hence it extends to a state preserving isomorphism from $ \Gamma_\anti(H_\R,U)$ to the Araki--Woods factor. This gives an embedding with expectation $E$
	$$L^\infty(\Omega_\Ber,\mu)\simeq \mathrm{W}^*\{c_x^*c_x,c_xc_x^*\mid x\in X_0\}\subset^E  \Gamma _\anti(H_\R,U).$$
Put $\cF_\anti := \cF_\anti (\ell^2(X))$ which is a standard representation for $ \Gamma _\anti(H_\R,U)$. 

	\item ($ G $-action) Let $ G  \act X=X_0\sqcup IX_0$ be the diagonal action and $ G \act^\pi \ell^2(X)=H$ the associated action. Since this commutes with $I$ and $P$, it induces an action on the CAR algebra $\mathrm{C}^*\{c_x\mid x\in X_0\} = \mathrm{C}^*_\anti(H_\R,U)$ by 
	$$\alpha_g(c_x) = c_{g\cdot x}\quad (\Leftrightarrow \alpha_g(W(\widehat{\delta}_x)) = W(\widehat{\delta}_{g\cdot x}) ),\quad \text{for }x\in X_0.$$
By Kakutani's condition, we can extend this action to $ \Gamma _\anti(H_\R,U)$. The resulting $\alpha$ restricts to the given Bernoulli action on $L^\infty(\Omega_\Ber,\mu)$.

\end{itemize}

\end{notation}
\begin{lem}\label{lem-RN-Strepn}
	Keep the setting from Notation \ref{notation}. For any $g\in  G $ there exists a unique $h_g\in L^1(\Omega_\Ber,\mu)$ (Radon--Nikodym derivative) such that $\varphi\circ \alpha_g^{-1}=\varphi(\, \cdot \, h_g)$ and
	$$ h_g  = \prod_{x\in X_0}\left(\frac{p_{g^{-1}\cdot x}}{p_{x}}c_{x}^*c_{x} + \frac{q_{g^{-1}\cdot x}}{q_{x}}c_{x}c_{x}^*\right) .$$
In particular, the standard implementation $U_g^\alpha$ of $\alpha_g$ on $\cF_\anti$ is given by 
	$$ U_g^\alpha\colon \cF_\anti \ni x\Omega \mapsto \alpha_g(x)h_g^{1/2}\Omega \in \cF_\anti ,\quad \text{for all }x\in  \Gamma _\anti(H_\R,U),\ g\in  G .$$
\end{lem}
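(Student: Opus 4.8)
The plan is to treat the two assertions separately, reducing everything to the quasi-free description of $\varphi$ recalled in Subsection \ref{Quasi-free states on CAR algebras}. For the first assertion, the key point is that $\varphi\circ\alpha_g^{-1}$ is again a quasi-free state whose symbol is \emph{diagonal}, so that it becomes a product state for which the Radon--Nikodym derivative can be read off factor by factor. Concretely, since $\alpha_g(c_x)=c_{g\cdot x}=c(\pi_g\delta_x)$ we have $\alpha_g^{-1}=\alpha_{\pi_{g^{-1}}}$, and feeding this into the quasi-free moment formula $\varphi(c(\eta_m)^*\cdots c(\xi_n))=\delta_{mn}\det[\langle R(A)\xi_i,\eta_j\rangle]$ yields $\varphi\circ\alpha_g^{-1}=\omega_{R_g}$ with $R_g:=\pi_g R(A)\pi_g^{-1}$. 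As $R(A)$ is diagonal in the basis $\{\delta_x\}_{x\in X_0}$ with eigenvalues $p_x$, the operator $R_g$ is again diagonal with $R_g\delta_x=p_{g^{-1}\cdot x}\delta_x$. I would then invoke the standard fact that a quasi-free state with diagonal symbol is the associated infinite product state: under the UHF identification $\mathrm C^*_\anti(H_\R,U)\simeq\bigotimes_{x\in X_0}\M_2$ this gives $\varphi\circ\alpha_g^{-1}=\overline{\bigotimes}_{x\in X_0}\psi'_x$, where $\psi'_x=\mathrm{tr}(\mathrm{diag}(p_{g^{-1}\cdot x},q_{g^{-1}\cdot x})\,\cdot\,)$. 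Working with the \emph{state} rather than the Bogoliubov automorphism $\alpha_g$ is what lets us avoid the Jordan--Wigner nonlocality of the CAR tensor decomposition.

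Next I would verify directly that $h_g:=\prod_{x\in X_0}(\frac{p_{g^{-1}\cdot x}}{p_x}c_x^*c_x+\frac{q_{g^{-1}\cdot x}}{q_x}c_xc_x^*)$ is the density of $\varphi\circ\alpha_g^{-1}$ relative to $\varphi$. Writing $h_{g,x}$ for the $x$-th factor, each $h_{g,x}$ is a positive diagonal element of the $x$-th copy of $\M_2$ with $\psi_x(\,\cdot\,h_{g,x})=\psi'_x$ and $\psi_x(h_{g,x})=1$; since the $h_{g,x}$ lie in the centralizer of $\varphi$ and distinct factors commute, a finitely supported simple tensor $a=\bigotimes_{x\in F}a_x$ satisfies $\varphi(a\,h_g)=\prod_{x\in F}\psi_x(a_xh_{g,x})\cdot\prod_{x\notin F}\psi_x(h_{g,x})=\prod_{x\in F}\psi'_x(a_x)=(\varphi\circ\alpha_g^{-1})(a)$. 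Since such elements are dense and both functionals are normal, this gives $\varphi\circ\alpha_g^{-1}=\varphi(\,\cdot\,h_g)$ on all of $\Gamma_\anti(H_\R,U)$. The $L^1$-convergence of the infinite product defining $h_g$ (so that indeed $h_g\in L^1(\Omega_\Ber,\mu)$) is precisely Kakutani's criterion: the partial products over finite $F$ form a bounded positive martingale with respect to $\varphi$, whose $L^1$-limit exists exactly under the Kakutani condition on $\{p_x,q_x\}$; this limit is the classical Radon--Nikodym derivative of the nonsingular Bernoulli action on $L^\infty(\Omega_\Ber,\mu)$. Uniqueness of $h_g$ in $L^1(\Omega_\Ber,\mu)$ is then immediate from faithfulness of $\varphi$. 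This identification of $h_g$ together with the justification of the infinite product is the main technical obstacle.

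For the second assertion I would use the description of the standard implementation in terms of the natural positive cone $\mathfrak P\subset\cF_\anti$ provided by the standard form theory (see Subsection \ref{Standard representations} and \cite{Ta03}). The standard implementation $U_g^\alpha$ is the unique unitary implementing $\alpha_g$ that preserves the standard structure, and it is given by $U_g^\alpha(x\Omega)=\alpha_g(x)\xi_g$, where $\xi_g\in\mathfrak P$ is the unique cone vector representing the transported state $\varphi\circ\alpha_g^{-1}$. Since $h_g$ lies, as a positive affiliated element, in the centralizer of $\varphi$, we have $\Delta^{\ri t}h_g^{1/2}\Omega=h_g^{1/2}\Omega$, hence $h_g^{1/2}\Omega\in\mathfrak P$; and using that $h_g^{1/2}$ is $\varphi$-central one computes $\langle x\,h_g^{1/2}\Omega,h_g^{1/2}\Omega\rangle=\varphi(h_g^{1/2}xh_g^{1/2})=\varphi(x\,h_g)=(\varphi\circ\alpha_g^{-1})(x)$. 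Therefore $\xi_g=h_g^{1/2}\Omega$, and the stated formula $U_g^\alpha(x\Omega)=\alpha_g(x)h_g^{1/2}\Omega$ follows.

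To keep the argument self-contained I would also record the direct check that $V_g\colon x\Omega\mapsto\alpha_g(x)h_g^{1/2}\Omega$ is a well-defined unitary implementing $\alpha_g$. Isometry is the one-line computation $\|\alpha_g(x)h_g^{1/2}\Omega\|^2=\varphi(\alpha_g(x^*x)h_g)=(\varphi\circ\alpha_g^{-1})(\alpha_g(x^*x))=\varphi(x^*x)=\|x\Omega\|^2$; surjectivity follows from the analogous map built from $g^{-1}$ and $h_{g^{-1}}$; and $V_gyV_g^*=\alpha_g(y)$ is immediate on the dense domain $\Gamma_\anti(H_\R,U)\Omega$. Preservation of the natural cone (equivalently $V_gJ=JV_g$), which holds since $V_g\Omega=h_g^{1/2}\Omega\in\mathfrak P$ and $V_g$ maps the generating vectors of $\mathfrak P$ back into $\mathfrak P$, then identifies $V_g$ with $U_g^\alpha$ by uniqueness, completing the proof.
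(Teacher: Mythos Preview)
Your proof is correct but takes a different route from the paper's. The paper's argument is a one-liner: since $\alpha_g$ commutes with the conditional expectation $E\colon\Gamma_\anti(H_\R,U)\to L^\infty(\Omega_\Ber,\mu)$ (it permutes the generators $c_x^*c_x,\,c_xc_x^*$), one has $\varphi\circ\alpha_g^{-1}=\mu\circ E\circ\alpha_g^{-1}=\mu\circ\alpha_g^{-1}\circ E=\mu(\,\cdot\,h_g)\circ E=\varphi(\,\cdot\,h_g)$, where $h_g$ is simply the classical Radon--Nikodym derivative of the nonsingular Bernoulli shift on $(\Omega_\Ber,\mu)$; the product formula is then the textbook formula for that derivative. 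You instead stay inside the quasi-free picture, identifying $\varphi\circ\alpha_g^{-1}$ as the quasi-free state with diagonal symbol $\pi_gR(A)\pi_g^{-1}$, passing to the product-state description under the UHF isomorphism, and reading off the density factor by factor. The paper's approach is shorter and isolates the single structural fact (preservation of $E$) that forces $h_g$ to lie in $L^1(\Omega_\Ber,\mu)$; your approach is self-contained within the CAR formalism and avoids invoking $E$, at the cost of the Jordan--Wigner bookkeeping you mention. For the standard implementation the paper just says ``follows by the general theory''; your argument via the positive-cone representative of $\varphi\circ\alpha_g^{-1}$ is exactly that general theory written out, so on this point the two agree.
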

\begin{proof}
	Since $\alpha_g$ preserves the expectation $E\colon  \Gamma _{\anti}(H_\R,U)\to L^\infty(\Omega_\Ber,\nu)$, if $h_g$ is the Radon--Nikodym derivative so that $\mu\circ \alpha_g^{-1} = \mu(\,\cdot\, h_g)$, then it satisfies $\varphi \circ \alpha_g^{-1}=\varphi(\,\cdot\, h_g)$. It is easy to see that $h_g$ is the form in the statement. The last part in the statement follows by the general theory.
\end{proof}

\subsection{How to use the boundary}\label{How to use the boundary}

Keep the action $ G  \act X_0$ and we use Notation \ref{notation}. For $ G  \act^\pi \ell^2(X)$, we can construct an embedding  $\ell^\infty(\sfZ)\subset \B(\cF_\anti)$ and a boundary action $ G  \act C(\partial \sfZ)$ as in Lemma \ref{lem-action-boundary2}, where the action is induced by $\Ad(\pi^\anti_g)$ for $g\in  G $. Then by Theorem \ref{thm-boundary-amenability}, the action is amenable if $ G $ is exact and $ G \act X_0$ has finite stabilizers and finitely many orbits. If the given Bernoulli action is measure preserving, we can follow Ozawa's proof \cite{Oz04} to deduce the solidity.

In our situation, however, the Bernoulli action can be non measure preserving. Then the standard implementation $U_g^\alpha$ of $ G  \act^\alpha  \Gamma_\anti(H_\R,U)$ does \textit{not} coincide with $\pi_g^\anti$, which yields several technical issues. For this reason, we can not follow the idea in \cite{Oz04}. 
For $g\in  G $, set 
	$$\mathrm{supp}(g):=\{i\in X_0\mid \mu_i\neq \mu_{g\cdot i}\}=\{i\in X_0\mid p_i\neq p_{g\cdot i}\}.$$
We study how to use $U_g^\alpha$ instead of $\pi_g^\anti$, under the assumption $|\mathrm{supp}(g)|<\infty$. 

The next lemma is the key observation in this direction. The lemma shows that $U_g^\alpha$ coincides with $\pi^\anti_g$ \textit{up to} a C$^*$-algebra. The proof of the lemma is the most technical part in the paper, so we prove it later in Subsection \ref{Proof of key lemma}. 

\begin{lem}\label{key lemma}
	Fix $g\in  G $ and assume $|\mathrm{supp}(g)|<\infty$. Then there exist $n\in \N$ and $X_i,Y_i\in \mathrm{C}^*\{\ell(x),r(y),f\mid x,y\in X,\ f\in \ell^\infty(\sfZ)\}$ for $i=1,\ldots,n$ such that
\begin{align*}
	 U^\alpha_g = \sum_{i=1}^nX_i \pi^\anti_g Y_i .
\end{align*}
\end{lem}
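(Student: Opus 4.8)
The plan is to read off $U_g^\alpha$ from the explicit formula $U_g^\alpha(x\Omega)=\alpha_g(x)h_g^{1/2}\Omega$ of Lemma \ref{lem-RN-Strepn} and to peel off the coordinate shift $\pi_g^\anti$, isolating a correction supported on the finitely many modes of $\supp(g)$. First I would use $|\supp(g)|<\infty$ to note that the product defining $h_g$ has only finitely many nontrivial factors—those indexed by $x\in g\cdot\supp(g)$—so that $h_g$, and hence $h_g^{1/2}$, is a bounded, boundedly invertible element of the finite-dimensional subalgebra generated by $\{c_x,c_x^*\mid x\in g\cdot\supp(g)\}$. Moreover $h_g^{1/2}\in L^\infty(\Omega_\Ber)$ sits in the centralizer of the vacuum state $\varphi$ (the modular flow of the Araki--Woods factor fixes the diagonal). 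Consequently right multiplication by $h_g^{1/2}$ on $\cF_\anti$ is implemented by $Jh_g^{1/2}J\in\Gamma_{\anti,r}(H_\R,U)$, and since $Jc_xJ$ is a combination of right creation operators, $Jh_g^{1/2}J$ lies in $\cB:=\mathrm{C}^*\{\ell(x),r(y),f\mid x,y\in X,\ f\in\ell^\infty(\sfZ)\}$.

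Writing $\beta_g:=\Ad(\pi_g^\anti)$ (which fixes $\Omega$ and satisfies $\beta_g(\ell(\xi))=\ell(\pi_g\xi)$) and using that $h_g^{1/2}$ is in the centralizer, I would rewrite the formula as
$$U_g^\alpha(x\Omega)=Jh_g^{1/2}J\,\alpha_g(x)\Omega=Jh_g^{1/2}J\,\pi_g^\anti\,\gamma_g(x)\Omega,\qquad \gamma_g:=\beta_g^{-1}\circ\alpha_g,$$
where the last equality uses $\alpha_g(x)\Omega=\pi_g^\anti(\pi_g^\anti)^*\alpha_g(x)\pi_g^\anti\Omega=\pi_g^\anti\gamma_g(x)\Omega$. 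A direct computation of $\gamma_g(c_w)=(\pi_g^\anti)^*c_{g\cdot w}\pi_g^\anti$ shows $\gamma_g(c_w)=c_w$ for $w\notin\supp(g)$, while for $w\in\supp(g)$ it is a different linear combination $\tilde c_w$ of the bare operators $\ell(\delta_w),\ell(\delta_{Iw})^*$, the coefficients being the modular weights that fail to match because $\mu_w\neq\mu_{g\cdot w}$. Thus $\gamma_g$ is a Bogoliubov $\ast$-homomorphism differing from the identity only on the finitely many modes of $\supp(g)$.

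Next I would identify the operator $\Phi$ defined by $\Phi(x\Omega):=\gamma_g(x)\Omega$; it is bounded since $\Phi=(\pi_g^\anti)^*Jh_g^{-1/2}J\,U_g^\alpha$, and multiplicativity of $\gamma_g$ gives the intertwining relations $\Phi c_w=\gamma_g(c_w)\Phi$ together with $\Phi\Omega=\Omega$. Hence $\Phi$ is the vacuum-preserving intertwiner between the representation $w\mapsto c_w$ and the modified one $w\mapsto\gamma_g(c_w)$; as these agree off $\supp(g)$, $\Phi$ commutes with $c_w$ for $w\notin\supp(g)$ and is therefore localized on a fixed finite $I$-invariant set $F\supset\supp(g)$, where it lies in the finite-dimensional algebra $\B(\cF_\anti(\ell^2(F)))$. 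Expanding this operator as a finite sum of monomials in the creation operators $\ell(\delta_x),\ell(\delta_x)^*$ $(x\in F)$ and the diagonal number/parity functions in $\ell^\infty(\sfZ)$ shows $\Phi\in\cB$, and substituting into the displayed identity gives $U_g^\alpha=Jh_g^{1/2}J\,\pi_g^\anti\,\Phi$. Writing $\Phi=\sum_{i=1}^n Y_i$ as a sum of such monomials then yields $U_g^\alpha=\sum_{i=1}^n(Jh_g^{1/2}J)\,\pi_g^\anti\,Y_i$, the asserted form with $X_i:=Jh_g^{1/2}J\in\cB$ and $Y_i\in\cB$.

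The step I expect to be the main obstacle—the computational heart flagged in the paper—is the explicit Fock-space realization of $\Phi$, i.e.\ proving $\Phi\in\cB$. Two features make this delicate. First, because $\pi_g$ does not commute with the modular operator $A$ on $\supp(g)$, the number operators $c_x^*c_x$ entering $h_g$ and the modified generators $\tilde c_w$ carry genuine creation--annihilation cross terms $\ell(\delta_x)^*\ell(\delta_{Ix})^*$ rather than diagonal pieces, so the Bogoliubov rotation relating the two representations must be written out by hand. Second, reassembling the local operator on $F$ into global operators requires tracking the fermionic Jordan--Wigner signs, and verifying that these are exactly the diagonal functions in $\ell^\infty(\sfZ)$—so that the expansion closes into a finite sum with every factor in $\cB$—is where the bulk of the computation lies.
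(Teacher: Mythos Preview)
Your approach is correct and genuinely different from the paper's. The paper argues by decomposing the identity as $1=\sum_{F\subset\supp(g)}P_{g,F}$ according to which pairs $\{x,Ix\}$ with $x\in\supp(g)$ occur in a wedge vector, and then, using the Wick formula \eqref{wick} and an induction on the degree, produces explicit correction operators $Z_F\in\mathrm{C}^*_\anti(H_\R,U)$ giving $U_g^\alpha P_{g,F}=Jh_g^{1/2}J\,\alpha_g(Z_F)\,V_g\,Z_F^{-1}P_{g,F}$; together with $V_g=f\pi_g^\anti$ for some $f\in\ell^\infty(\sfZ)$, this yields the finite decomposition. Your route bypasses the Wick formula and the induction entirely: you isolate the single bounded operator $\Phi=(\pi_g^\anti)^*Jh_g^{-1/2}J\,U_g^\alpha$, read off the intertwining relations $\Phi c_w=\gamma_g(c_w)\Phi$, and argue by a commutant computation that $\Phi\in\cB$. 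This is more conceptual and, once the localization is made precise, shorter; the paper's approach in return gives an explicit formula for each summand.

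One point in your sketch needs sharpening. From ``$\Phi$ commutes with $c_w$ for $w\in X_0\setminus\supp(g)$'' it does \emph{not} immediately follow that $\Phi\in\B(\cF_\anti(\ell^2(F)))\otimes 1$: under the tensor splitting $\cF_\anti(\ell^2(X))\cong\cF_\anti(\ell^2(F))\otimes\cF_\anti(\ell^2(X\setminus F))$ the operator $c_w$ corresponds to $(-1)^{N_F}\otimes c_w$, and the commutant of this family is $\{T_0\otimes 1+T_1\otimes(-1)^{N_{X\setminus F}}:T_0\text{ $F$-even},\ T_1\text{ $F$-odd}\}$. This is easily repaired in either of two ways: (i) observe that $\gamma_g$ preserves the $\Z_2$-grading, hence $\Phi$ commutes with the global parity $(-1)^N$, and this forces $T_1=0$; or (ii) note that even without (i), $(-1)^{N_{X\setminus F}}\in\ell^\infty(\sfZ)\subset\cB$, so both terms lie in $\cB$ anyway. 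With this fix the Jordan--Wigner bookkeeping you worried about essentially disappears: once $\Phi$ is in (a twist of) $\B(\cF_\anti(\ell^2(F)))\otimes 1$, it is automatically a finite polynomial in $\{\ell(\delta_x),\ell(\delta_x)^*\mid x\in F\}$, hence in $\cB$, and no further $\ell^\infty(\sfZ)$ corrections are needed.
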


Here is a trivial lemma.

\begin{lem}\label{key lemma2}
Fix $g\in  G $ and assume $|\mathrm{supp}(g)|<\infty$. Then it holds that  
	$$U_g^{\alpha }fU_g^{\alpha *} \in  g\cdot f + \K(\cF_\anti),\quad \text{for any }f\in C(\overline{\sfZ}).$$
\end{lem}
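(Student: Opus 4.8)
The plan is to use Lemma \ref{key lemma} to factor $U_g^\alpha$ through $\pi_g^\anti$ together with an operator in the auxiliary C$^*$-algebra
$$ \cC := \mathrm{C}^*\{\ell(x),r(y),f \mid x,y\in X,\ f\in \ell^\infty(\sfZ)\}\subset \B(\cF_\anti), $$
and then to reduce the claim to a single commutator estimate. Write $W := U_g^\alpha (\pi_g^\anti)^*$, so that $U_g^\alpha = W\pi_g^\anti$. First I would check that $W\in \cC$. Indeed, Lemma \ref{key lemma} gives $U_g^\alpha = \sum_{i=1}^n X_i\pi_g^\anti Y_i$ with $X_i,Y_i\in \cC$, whence
$$ W = \sum_{i=1}^n X_i\,\pi_g^\anti Y_i (\pi_g^\anti)^* = \sum_{i=1}^n X_i\,\Ad(\pi_g^\anti)(Y_i). $$
Since $\Ad(\pi_g^\anti)$ sends $\ell(x)\mapsto \ell(g\cdot x)$ and $r(y)\mapsto r(g\cdot y)$ (from $U^\anti\ell(\xi)U^{\anti *}=\ell(U\xi)$ and $U^\anti r(\xi)U^{\anti *}=r(U\xi)$) and $f\mapsto g\cdot f\in \ell^\infty(\sfZ)$ (by Lemma \ref{lem-action-boundary1}.1), it maps the generators of $\cC$ into $\cC$, so $\Ad(\pi_g^\anti)(\cC)\subset \cC$ and therefore $W\in \cC$.

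Next, using $\pi_g^\anti f\pi_g^{\anti *}=g\cdot f$ (Lemma \ref{lem-action-boundary1}.1) and unitarity of $W$, I would rewrite
$$ U_g^\alpha f U_g^{\alpha *} = W\,\pi_g^\anti f\pi_g^{\anti *}\,W^* = W(g\cdot f)W^*, \qquad U_g^\alpha f U_g^{\alpha *}-(g\cdot f) = [W,\,g\cdot f]\,W^*. $$
Thus it suffices to prove $[W,\,g\cdot f]\in \K(\cF_\anti)$. Set $h:=g\cdot f$; by Lemma \ref{lem-action-boundary2} we have $h\in C(\overline{\sfZ})$, and since $C(\overline{\sfZ})$ is a C$^*$-algebra, also $h^*\in C(\overline{\sfZ})$.

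Finally I would establish $[W,h]\in \K(\cF_\anti)$ by a soft argument. Consider
$$ \cB := \{T\in \B(\cF_\anti)\mid [T,h]\in \K(\cF_\anti)\ \text{and}\ [T,h^*]\in \K(\cF_\anti)\}, $$
which is a norm-closed $\ast$-closed subalgebra of $\B(\cF_\anti)$: the Leibniz rule $[ST,h]=S[T,h]+[S,h]T$ (and the same for $h^*$) gives closure under products, while $[T^*,h]=-[T,h^*]^*$ and $[T^*,h^*]=-[T,h]^*$ give closure under adjoints. The generators $\ell(x),r(y)$ of $\cC$ lie in $\cB$, because $h,h^*\in C(\overline{\sfZ})$ commute modulo $\K(\cF_\anti)$ with every element of $\cC_{\ell,r}$ by definition of $C(\overline{\sfZ})$; and every $f\in \ell^\infty(\sfZ)$ lies in $\cB$, because $\iota(\ell^\infty(\sfZ))$ is abelian, so $[f,h]=[f,h^*]=0$. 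Hence $\cC\subset \cB$, and in particular $W\in \cB$, giving $[W,h]\in \K(\cF_\anti)$. Combined with the display above, this yields $U_g^\alpha f U_g^{\alpha *}\in g\cdot f+\K(\cF_\anti)$.

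The only genuinely nontrivial input is Lemma \ref{key lemma}, which is deferred; everything here is formal, consistent with the author calling this a trivial consequence. The one point I would be careful about is that elements of $C(\overline{\sfZ})$ need not be self-adjoint, which is exactly why I track both $h$ and $h^*$ in the definition of $\cB$, so as to close $\cB$ under adjoints without appealing to Fuglede's theorem in the Calkin algebra.
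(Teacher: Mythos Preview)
Your proof is correct and follows essentially the same approach as the paper's: both use Lemma~\ref{key lemma} to express $U_g^\alpha$ in terms of $\pi_g^\anti$ and elements of $\cC$, then exploit that $f\in C(\overline{\sfZ})$ commutes modulo compacts with $\ell(x),r(y)$ (by definition) and exactly with $\ell^\infty(\sfZ)$ (by commutativity), together with $\pi_g^\anti f\pi_g^{\anti*}=g\cdot f$. The only organizational difference is that you first package everything into the single unitary $W=U_g^\alpha(\pi_g^\anti)^*\in\cC$ via $\Ad(\pi_g^\anti)(\cC)\subset\cC$, whereas the paper commutes $f$ through each summand $X_i\pi_g^\anti Y_i$ directly; your version is a bit more explicit about the $\ast$-algebra bookkeeping (tracking $h^*$ to close $\cB$ under adjoints), but the content is identical.
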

\begin{proof}
	By Lemma \ref{key lemma}, write $U_g^\alpha = \sum_{i=1}^n X_i \pi^\anti_g Y_i$. Then by the definition of $C(\overline{\sfZ})$, $f$ commutes with $X_i,Y_i$ up to compacts. Since $\pi^\anti_g f = g\cdot f\,  \pi^\anti_g$, we get
	$$X_i\pi^\anti_g Y_i f = g\cdot f \, X_i\pi^\anti_g Y_i  + T_i\quad \text{for some }T_i\in \K(\cF_\anti)$$
for all $i$. This implies $U_g^\alpha f = g\cdot f \, U_g^\alpha + T_1+\cdots+T_n$ and get the conclusion.
\end{proof}

Recall from Subsection \ref{Standard representations} that $\cF_\anti \otimes \ell^2( G )$ is the standard representation of $ \Gamma _\anti(H_\R,U) \rtimes_\alpha  G $ and its commutant is also a crossed product $ \Gamma_{\anti,r}(H_\R,U) \rtimes_{\alpha^r}  G $. Here $ \Gamma_{\anti,r}(H_\R,U)$ is the von Neumann algebra generated by $W_r(I\widehat{\xi}) = JW(\widehat{\xi})J$ for $\xi\in H$. 
We regard $ \Gamma_{\anti,r}(H_\R,U) \rtimes_{\alpha^r}  G $ as a subalgebra in $\B(\cF_\anti \otimes \ell^2( G ))$. 
Set 
	$$ \cK:=\left\{\sum_{g\in  G }x_g\otimes e_{g,g}\in \B(\cF_\anti \otimes \ell^2( G )) \;\middle|\; x_g\in \K(\cF_\anti) \ \text{for all }g\in  G  \right\} $$
and observe that $\mathrm{C}_\anti^*(H_\R,U)\rtimes_{\alpha,{\rm red} } G $ and $\mathrm{C}_{\anti,r}^*(H_\R,U)\rtimes_{\alpha^r,{\rm red} } G  $ are contained in the multiplier $\mathrm{M}(\cK)$ of $\cK$.

Now we deduce condition (AO) in our setting. 

\begin{prop}\label{prop-AO}
	Keep the notation from Notation \ref{notation} and assume that $ G $ is exact and $ G \act X_0$ has finite stabilizers and finitely many orbits. Assume that $|\mathrm{supp}(g)|<\infty$ for all $g\in  G $. 
Then the following algebraic $\ast$-homomorphism is min-bounded:
\begin{align*}
	\nu\colon (\mathrm{C}_\anti^*(H_\R,U)\rtimes_{\alpha,{\rm red }} G ) \ota (\mathrm{C}_{\anti,r}^*(H_\R,U)\rtimes_{\alpha^r,{\rm red }} G ) &\to \mathrm{M}(\cK)/\cK
\end{align*}
given by $\nu(x\otimes y) = xy+\cK$.
\end{prop}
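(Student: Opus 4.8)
The plan is to reduce the claim to the standard nuclear--factorization criterion for condition (AO). By the description of the standard representation of $\Gamma_\anti(H_\R,U)\rtimes_\alpha G$ on $\cF_\anti\otimes\ell^2(G)$ in Subsection \ref{Standard representations}, the algebra $A_1:=\mathrm{C}_\anti^*(H_\R,U)\rtimes_{\alpha,{\rm red}}G$ sits inside $\Gamma_\anti(H_\R,U)\rtimes_\alpha G$, while $A_2:=\mathrm{C}_{\anti,r}^*(H_\R,U)\rtimes_{\alpha^r,{\rm red}}G$ sits inside the commutant $(\Gamma_\anti(H_\R,U)\rtimes_\alpha G)'$. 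Hence $A_1$ and $A_2$ commute on $\cF_\anti\otimes\ell^2(G)$, so $\nu(x\otimes y)=xy+\cK$ is a genuine $\ast$-homomorphism on the algebraic tensor product and only its min-boundedness is at issue. Writing $q\colon A_1\to\mathrm{M}(\cK)/\cK$ and $\tau\colon A_2\to\mathrm{M}(\cK)/\cK$ for the (restricted) quotient maps, it suffices to produce a nuclear C$^*$-subalgebra $\mathcal N'\subseteq\mathrm{M}(\cK)/\cK$ with $q(A_1)\subseteq\mathcal N'$ and $[\mathcal N',\tau(A_2)]=0$: then $\nu$ factors as $A_1\otm A_2\xrightarrow{q\otimes\tau}\mathcal N'\otm\tau(A_2)=\mathcal N'\otimes_{\max}\tau(A_2)\to\mathrm{M}(\cK)/\cK$, where the last arrow is the contractive multiplication $\ast$-homomorphism and the middle identity uses nuclearity of $\mathcal N'$.

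To build $\mathcal N'$ I work first on $\cF_\anti$. Let $Q_\K\colon\B(\cF_\anti)\to\B(\cF_\anti)/\K(\cF_\anti)$ be the quotient and set $\mathcal B_0:=\mathrm{C}^*\{C(\overline{\sfZ}),\mathrm{C}_\anti^*(H_\R,U)\}$ and $\widetilde{\mathcal B_0}:=Q_\K(\mathcal B_0)$. Since $\mathrm{C}_\anti^*(H_\R,U)\subseteq\cC_{\ell,r}$, the definition of $C(\overline{\sfZ})$ forces $Q_\K(C(\overline{\sfZ}))$ to be central in $\widetilde{\mathcal B_0}$; by Proposition \ref{prop-embedding} this central subalgebra is $C(\partial\sfZ)$. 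As $X$ is infinite, $S:=Q_\K(\mathrm{C}_\anti^*(H_\R,U))$ is a copy of the simple nuclear CAR algebra (indeed $\mathrm{C}_\anti^*(H_\R,U)\cap\K(\cF_\anti)=0$). Thus $\widetilde{\mathcal B_0}$ is a quotient of the nuclear algebra $C(\partial\sfZ)\otimes S$, hence nuclear, and since $S$ is simple it is of the form $C(F)\otimes S$ with $Z(\widetilde{\mathcal B_0})=C(F)$ for a closed $F\subseteq\partial\sfZ$. Now $\Ad(U_g^\alpha)$ descends to the Calkin algebra and preserves $\widetilde{\mathcal B_0}$: it fixes $\mathrm{C}_\anti^*(H_\R,U)$ setwise (as $\alpha_g(c_x)=c_{g\cdot x}$) and, by Lemma \ref{key lemma2}, carries $Q_\K(C(\overline{\sfZ}))$ onto itself, inducing the boundary action on $C(\partial\sfZ)$. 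In particular $F$ is $G$-invariant and $G\act C(F)$ is the restriction of the amenable action of Theorem \ref{thm-boundary-amenability}, hence amenable; therefore $G\act\widetilde{\mathcal B_0}$ is amenable and $\mathcal N:=\widetilde{\mathcal B_0}\rtimes_{\mathrm{red}}G=\widetilde{\mathcal B_0}\rtimes_{\mathrm{full}}G$ is nuclear.

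Next I represent $\mathcal N$ in $\mathrm{M}(\cK)/\cK$. The assignments $\pi_0(Q_\K(b)):=b\otimes 1+\cK$ (for $b\in\mathcal B_0$) and $v_g:=U_g^\alpha\otimes\lambda_g+\cK$ are well defined (note $b\otimes 1\in\mathrm{M}(\cK)$ and $\K(\cF_\anti)\otimes 1\subseteq\cK$), $g\mapsto v_g$ is a unitary representation, and Lemma \ref{key lemma2} gives covariance $v_g\,\pi_0(Q_\K(b))\,v_g^*=\pi_0(g\cdot Q_\K(b))$. By amenability this covariant pair integrates to a $\ast$-homomorphism $\rho\colon\mathcal N\to\mathrm{M}(\cK)/\cK$; put $\mathcal N':=\rho(\mathcal N)$, a nuclear algebra containing $q(A_1)$, since its generators $\mathrm{C}_\anti^*(H_\R,U)\otimes 1+\cK$ and $v_g$ generate $q(A_1)$. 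It remains to check $[\mathcal N',\tau(A_2)]=0$. The generators $\mathrm{C}_\anti^*(H_\R,U)\otimes 1$ and $U_g^\alpha\otimes\lambda_g$ lie in $A_1$, hence commute with $A_2$ exactly. For the boundary part, using $U_h^\alpha J=J U_h^\alpha$ from Subsection \ref{Standard representations} one computes, for $f\in C(\overline{\sfZ})$ and $a\in\mathrm{C}_\anti^*(H_\R,U)$,
\[
[f\otimes 1,\pi_r(JaJ)]=\sum_{h\in G}[f,U_h^\alpha JaJ(U_h^\alpha)^*]\otimes e_{h,h}=\sum_{h\in G}[f,J\alpha_h(a)J]\otimes e_{h,h},
\]
and since $J\alpha_h(a)J\in\mathrm{C}_{\anti,r}^*(H_\R,U)\subseteq\cC_{\ell,r}$, each commutator lies in $\K(\cF_\anti)$ with norm at most $2\|a\|\,\|f\|$, so the whole sum lies in $\cK$. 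As $f\otimes 1$ also commutes with $1\otimes\rho_g$, the image of $C(\overline{\sfZ})\otimes 1$ commutes with $\tau(A_2)$, whence $\mathcal N'$ commutes with $\tau(A_2)$.

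Combining the three paragraphs shows that $\nu$ is contractive for the minimal norm, which is the assertion. The only genuinely heavy input is Lemma \ref{key lemma}, entering through Lemma \ref{key lemma2}; within the present argument the points requiring care are the centrality and amenability that make $\widetilde{\mathcal B_0}\rtimes_{\mathrm{red}}G$ nuclear, and the verification that the boundary $C(\overline{\sfZ})\otimes 1$ commutes modulo $\cK$ with the right crossed product $A_2$. I expect the latter commutation—together with correctly identifying the center of $\widetilde{\mathcal B_0}$ so that the paper's notion of amenability applies—to be the main obstacle in making the factorization rigorous.
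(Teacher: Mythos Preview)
Your argument is correct and is essentially the paper's proof: both use that $C(\partial\sfZ)$ sits centrally (modulo compacts) next to $\mathrm{C}^*_\anti(H_\R,U)$ and $\mathrm{C}^*_{\anti,r}(H_\R,U)\rtimes_{\alpha^r}G$, that $\Ad(U_g^\alpha)$ induces the boundary action via Lemma~\ref{key lemma2}, and that amenability of $G\act C(\partial\sfZ)$ makes the resulting crossed product nuclear, through which $\nu$ factors. The only difference is packaging---you first build the abstract nuclear algebra $\widetilde{\mathcal B_0}\rtimes_{\rm red}G$ and then represent it, whereas the paper works directly with the covariant triple $\pi_\partial\otimes\pi_\ell\otimes\pi_r$ on $C(\partial\sfZ)\otm B\otm(B_r\rtimes_{\rm red}G)$ and takes its crossed product by $G$; since $\widetilde{\mathcal B_0}\cong C(\partial\sfZ)\otimes B$ (your $F$ is in fact all of $\partial\sfZ$, the quotient being injective) the two routes are the same.
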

\begin{proof}
	For simplicity write $B:=\mathrm{C}_\anti^*(H_\R,U)$ and $B_r:=\mathrm{C}_{\anti,r}^*(H_\R,U)$. We denote by $\beta\colon  G  \act C(\partial \sfZ)$ the action in Lemma \ref{lem-action-boundary2}, hence $\beta$ is amenable by Theorem \ref{thm-boundary-amenability}. Since $c_0(\sfZ)\otimes 1 \subset \cK$, there is a $\ast$-homomorphism
	$$\pi_{\partial}\colon C(\partial \sfZ) \ni f+c_0(\sfZ) \mapsto f\otimes 1+\cK\in \mathrm{M}(\cK))/\cK.$$
We denote by $\pi_\ell,\pi_r$ the embedding of $B\rtimes_{\alpha,{\rm red}} G ,B_r\rtimes_{\alpha^r,{\rm red}} G $ into $\mathrm{M}(\cK)$. 
We claim that for any $f\in C(\overline{\sfZ})$, $x\in B$, $y\in B_r\rtimes_{\alpha^r,{\rm red}} G$,
	$$[f\otimes 1, \pi_\ell(x)],\ [f\otimes 1, \pi_r(y)]\in \cK.$$
To see this, as in the proof of Lemma \ref{lem-action-boundary1}, we have only to see the case of generators. Then the claim follows by definition of $\cK$. 
Thus there is a well defined algebraic $\ast$-homomorphism
\begin{align*}
	\pi_{\partial}\otimes \pi_\ell \otimes \pi_r \colon C(\partial{\sfZ}) \ota B \ota (B_r\rtimes_{\alpha^r,{\rm red}} G ) \to \mathrm{M}(\cK)/\cK.
\end{align*}
Since $B$ and $C(\partial{\sfZ})$ are nuclear, it is min-bounded. 

Consider the $ G $-action on $C(\partial \sfZ)\otm B \otm (B_r\rtimes_{\alpha^r,{\rm red}} G )$, which is the diagonal action of $\beta$, $\alpha$, and the trivial action. 
Consider the $ G $-action on $\mathrm{M}(\cK)/\cK$ given by $\Ad(U_g^\alpha\otimes \lambda_g)$ for $g\in  G $. Then it restricts to $\alpha$ on $\pi_\ell(B)$ and the trivial action on $\pi_r(B_r\rtimes_{\alpha^r,{\rm red}} G )$. We claim that it restricts to $\beta$ on $\pi_{\partial}(C(\partial\sfZ))$.

	To see the claim, it is enough to see that $\Ad(U_g^\alpha)$ for $g\in  G $ coincides with $\beta$ on $C(\partial\sfZ)$. 
Observe by Proposition \ref{prop-embedding} that
	$$ \frac{C(\overline{\sfZ})+\K(\cF_\anti)}{\K(\cF_\anti)} = \frac{C(\overline{\sfZ})}{C(\overline{\sfZ})\cap \K(\cF_\anti)} = \frac{C(\overline{\sfZ})}{c_0(\sfZ)} =C(\partial\sfZ).$$
Then by Lemma \ref{key lemma2}, $\Ad(U_g^\alpha)$ induces an automorphism of $C(\partial\sfZ)$ which is given by $f+\K(\cF_\anti)\mapsto g\cdot f + \K(\cF_\anti)$. This is exactly $\beta$ on $C(\partial \sfZ)$ and the claim is proven.

Thus we conclude that $\pi_{\partial}\otimes \pi_\ell \otimes \pi_r$ is $ G $-equivariant. Since $ G  \act^\beta C(\partial \sfZ)$ is amenable and since $C(\partial \sfZ)$ is contained in the center of $C(\partial \sfZ)\otm B\otm (B_r\rtimes_{\alpha^r,{\rm red}} G  )$, it follows that the map
\begin{align*}
	[C(\partial \sfZ) \otm B \otm (B_r\rtimes_{\alpha^r,{\rm red}} G )]\rtimes_{\rm red} G  \to \mathrm{M}(\cK)/\cK
\end{align*}
is bounded. Finally the domain contains
	$$[B \otm (B_r\rtimes_{\alpha^r,{\rm red}} G )]\rtimes_{{\rm red}} G   = (B\rtimes_{\alpha,{\rm red}} G )\otm(B_r\rtimes_{\alpha^r,{\rm red}} G )$$
and the restriction to it coincides with $\nu$. Thus $\nu$ is bounded.
\end{proof}

\begin{proof}[Proof of Proposition \ref{propE}]
	Keep the setting in Notation \ref{notation} and let $e\colon \cF_\anti \to L^2(\Omega_\Ber,\mu)$ be the orthogonal projection. Then since $e\otimes 1\in \mathrm{M}(\cK)$, the compression map by $e\otimes 1$ induces a ucp map from $\mathrm{M}(\cK)/\cK $ into $e\mathrm{M}(\cK)e/e\cK e$ (which coincides with the range of $\nu$ in Proposition \ref{propE}). By restricting $\nu$ in Proposition \ref{prop-AO} and by composing it with the compression map, we get the conclusion.
\end{proof}

\subsection{Proof of Theorem \ref{thmA}}
\label{Proof of Theorem A}

We arrive at the main theorem of this article. 
Combining this theorem with a result in the appendix, we will obtain  solidity of actions, Note that the freeness is not necessary in this theorem. Recall that a von Neumann algebra with separable predual is called \textit{amenable} if it is hyperfinite (which has many different characterizations \cite{Co75}).

\begin{thm}\label{solid-Bernoulli-thm}
	Keep the setting as in Notation \ref{notation}. Assume that $ G $ is exact and $ G \act X_0$ has finite stabilizers and finitely many orbits. 
Assume that $|\mathrm{supp}(g)|<\infty$ for all $g\in  G $. Consider the associated action $G\act \Gamma_\anti(H_\R,U)=:M$ and the crossed product $M\rtimes G$.

Then for any projection $p\in M$ and any diffuse von Neumann subalgebra $A\subset p Mp$ with expectation, $A' \cap p( M\rtimes  G )p$ is amenable. Further if $ G  $ is bi-exact, then $q( M \rtimes  G )q$ is solid for any projection $q\in M\rtimes  G $.
\end{thm}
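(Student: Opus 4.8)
The plan is to feed the condition (AO) of Proposition~\ref{prop-AO} into the general solidity machinery for actions, and then to use bi-exactness of $G$ to upgrade from solidity of the action to solidity of the corners of the crossed product. Throughout I write $N:=M\rtimes_\alpha G$, acting in standard form on $\cF_\anti\otimes\ell^2(G)$ as in Notation~\ref{notation}, with commutant $N'=\Gamma_{\anti,r}(H_\R,U)\rtimes_{\alpha^r}G$ and the ideal $\cK$ of ``$M$-relative compacts''. The structural point that makes everything work is that the base $M=\Gamma_\anti(H_\R,U)$ is an Araki--Woods factor and is therefore \emph{amenable}; consequently any subalgebra of $N$ that is amenable relative to $M$ is automatically amenable, and the first assertion is literally the statement that the action $G\act M$ is solid.

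For the first assertion I would run Ozawa's solidity argument in its relative form. Condition (AO) (Proposition~\ref{prop-AO}) provides the min-bounded $*$-homomorphism $\nu$ into $\mathrm{M}(\cK)/\cK$, and since the base $M$ is amenable this is exactly the data needed to manufacture hypertraces relative to $M$. Given a diffuse $A\subset pMp$ with expectation, put $B:=A'\cap pNp$, so that $JBJ\subset N'$ commutes with $B$. Because $A$ is diffuse and sits inside $M$, whose standard action on $\cF_\anti$ has compacts $\K(\cF_\anti)$ with no nonzero $A$-central vectors, the ideal $\cK$ is annihilated in the limit along a net of unitaries in $A$ tending weakly to $0$ (compactness makes the relevant matrix coefficients vanish). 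Feeding such a limit state through the min-bounded $\nu$ then yields, as in \cite{Oz04}, a $B$-central state on $\langle N,e_M\rangle$ witnessing amenability of $B$ relative to $M$; since $M$ is amenable, $B=A'\cap pNp$ is amenable. In the final write-up this step is the general criterion of Section~\ref{Solidity for actions on von Neumann algebras} (Theorem~\ref{solid-thm}) applied with input Proposition~\ref{prop-AO}, the boundary amenability of Theorem~\ref{thm-boundary-amenability} being already absorbed there.

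For the second assertion I would combine solidity of the action with bi-exactness of $G$. When $G$ is bi-exact the action $G\times G\act\ell^\infty(G)/c_0(G)$ is amenable, and together with amenability of $M$ this upgrades the \emph{one-sided} relative condition (AO) of Proposition~\ref{prop-AO} to genuine bi-exactness of $N$ relative to $M$. The resulting dichotomy then applies to an \emph{arbitrary} diffuse $A\subset qNq$ with expectation: either $A\preceq_N M$, or $A'\cap qNq$ is amenable relative to $M$. In the second case amenability of $M$ gives amenability of $A'\cap qNq$ outright. In the first case an intertwiner carries a corner of $A$ into $M$, where the first assertion applies; transporting amenability of the relative commutant back along the intertwiner (a standard spectral-gap/corner argument) again yields amenability of $A'\cap qNq$. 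Exhausting by projections, $qNq$ is solid for every projection $q\in N$.

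The genuinely hard input---Proposition~\ref{prop-AO}, which rests on the key Lemma~\ref{key lemma}---is already available, so the remaining difficulties are of bookkeeping type, specific to the type III setting. The first is to make the relative hypertrace argument rigorous with the non-tracial standard representation and the modular conjugation $J$ of Subsection~\ref{Standard representations}; in particular one must check that the diffuse $A\subset M$ genuinely annihilates $\cK$ in the limit and that the resulting state is central for the commuting $B$, even though $U_g^\alpha\neq\pi_g^\anti$ in the non-pmp regime. The second, in the bi-exact part, is the intertwining case $A\preceq_N M$: transporting amenability of a relative commutant along a non-unitary intertwiner with expectation must be handled without a trace. I expect the crux to be the first of these, namely confirming that the general relative-solidity theorem of the appendix applies verbatim to the present non-measure-preserving crossed product.
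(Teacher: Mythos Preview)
Your outline is essentially the paper's own proof. For the first assertion the paper likewise averages over a Haar unitary in $A$ to produce a ucp map $\Phi$ that kills $\cK$ and then quotes \cite[Theorem~4.7]{Oz04} directly; your phrasing via ``amenable relative to $M$'' plus amenability of $M$ is an equivalent repackaging. One correction: Theorem~\ref{solid-thm} in the appendix is \emph{not} a criterion that takes condition~(AO) as input and outputs solidity---it goes the other way, deriving structural consequences \emph{from} solidity of the action---so it plays no role here; the first assertion is proved straight from Proposition~\ref{prop-AO} by the Ozawa averaging argument.

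For the bi-exact part your dichotomy (either $A\preceq_N M$, or $A'\cap qNq$ is amenable relative to $M$, hence amenable) is exactly what the paper runs, phrased contrapositively: assume $P:=A'\cap qNq$ has no amenable direct summand and deduce $A\preceq_N M$, then use the first assertion on the intertwined corner to contradict non-amenability of $P$. The point you flag as the residual difficulty---making the dichotomy and the intertwining transport work without a trace---is handled in the paper by quoting the type~III machinery of \cite[Theorem~4.1]{HI15b} (to get $P'\cap qN^\omega q\subset q(M^\omega\rtimes G)q$) and \cite[Theorem~4.3(4)]{HI15a} (to extract $A\preceq_N M$). So there is no gap in your plan; you should just replace the appeal to the appendix by a direct appeal to \cite{Oz04}, and for the second half cite \cite{HI15a,HI15b} rather than leaving the type~III bookkeeping open.
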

\begin{proof}
	For the first half of the statement, we mostly follow the proof of \cite[Theorem 4.7]{Oz04}. We give a sketch for the reader's convenience. Let $u$ be a Haar unitary in $A$ and we may assume $A$ is generated by $u$. Define a ucp map
	$$ \Phi\colon \B(\cF_\anti\otimes \ell^2( G ))\to  A' \cap p\B(\cF_\anti\otimes \ell^2( G ))p;\quad X\mapsto \lim_{n\to \omega}\frac{1}{n}\sum_{k=1}^n (u^k\otimes 1)X(u^{k *}\otimes 1),$$
where $\omega$ is a fixed free ultrafilter on $\N$. 
Let $\psi$ be any faithful normal state on $A$ and extend it on $M$ by the given expectation for $A\subset pMp$. We further extend it on $M\rtimes  G $ by the canonical expectation. Then $\psi\circ \Phi|_{M\rtimes  G }=\psi(p\, \cdot \, p)$. In particular $\Phi|_{p(M\rtimes  G )p}$ coincides with the unique $\psi$-preserving expectation $E_{A'}\colon p(M\rtimes  G )p \to A' \cap p(M\rtimes  G )p$. Once we get this condition, we can follow the proof of \cite[Theorem 4.7]{Oz04}.


	Assume next that $ G $ is bi-exact. We need some known results from Popa's deformation theory, so we only explain how to apply them. 
Suppose by contradiction that there exist a projection $p\in M\rtimes  G $, a diffuse von Neumann subalgebra $A\subset p(M\rtimes  G )p$ with expectation such that $P:=A' \cap p(M\rtimes  G )p$ is not amenable. We may assume $A$ is abelian. Up to cutting by a projection in $P$, we may assume that $P$ has no amenable direct summand. 
Then by \cite[Theorem 4.1]{HI15b}, we have $P'\cap p(M\rtimes  G )^\omega p\subset p(M^\omega\rtimes  G )p$, so that $A^\omega p\subset p(M^\omega\rtimes  G )p$. Combined with \cite[Theorem 4.3(4)]{HI15a}, this implies Popa's embedding condition $A \preceq_{M\rtimes  G } M$ (see \cite[Definition 4.1]{HI15a}). Thus there are projections $e\in A$, $f\in M$, a partial isometry $v\in e(M\rtimes  G )f$ and an injective $\ast$-homomorphism $\theta\colon eAe\to fMf$ such that $v\theta(a) = a v$ for all $a\in eAe$. By the first half of the proof, we know that $\theta(eAe)' \cap f(M\rtimes  G )f$ is amenable. In particular
	$$v^*v [\theta(eAe)' \cap f(M\rtimes  G )f ]v^*v = v^* [A'\cap p(M\rtimes  G )p ]v $$
is amenable, hence $A'\cap p(M\rtimes  G )p$ has an amenable direct summand, a contradiction.
\end{proof}

We prove Theorem \ref{thmA} and Corollary \ref{corD} for generalized Bernoulli actions as follows. 

\begin{thm}\label{main thm}
	Let $ G $ be a countable discrete group acting on a countable set $\cI$. Consider a product measure space with two base points
	$$ (\Omega_\Ber , \mu):=\prod_{i\in \mathcal I} (\{0,1\}, p_i\delta_0 + q_i \delta_1)\quad \text{where $p_i\in (0,1)$ and $p_i+q_i=1$ for all $i\in \cI$}, $$
which satisfies Kakutani's condition, so that the nonsingular (generalized) Bernoulli action $ G  \act (\Omega_\Ber,\mu)$ is defined. Assume that $(\Omega_\Ber,\mu)$ has no atoms and $G\act (\Omega_\Ber,\mu)$ is essentially free. Assume further that
\begin{itemize}
	\item[$\rm (i)$] $ G $ is exact and the action $ G  \act \cI$ has finite stabilizers and finitely many orbits;
	\item[$\rm (ii)$] for any $g\in  G $, $p_i = p_{g\cdot i}$ for all but finitely many $i\in \cI$. 
\end{itemize}
Then the Bernoulli action $ G  \act (\Omega_\Ber,\mu)$ is solid. Further $L^\infty(\Omega_\Ber,\mu)$ is solid if $G$ is bi-exact.
\end{thm}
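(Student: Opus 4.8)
The plan is to reduce the statement to the machinery already developed --- Lemma~\ref{lem-Bernoulli-embedding} and Theorem~\ref{solid-Bernoulli-thm} --- after checking that the present hypotheses are exactly the ones required there. First I would set $X_0 := \cI$ and invoke Notation~\ref{notation} together with Lemma~\ref{lem-Bernoulli-embedding}: choosing $\mu_i > 0$ with $p_i = (1 + \mu_i^{-1})^{-1}$ and taking $U \colon \R \act H_\R$ to be the almost periodic representation with eigenvalues $\{\mu_i\}_{i \in \cI}$, this produces $M := \Gamma_\anti(H_\R, U)$, an action $\alpha \colon G \act M$, and an inclusion with expectation $L^\infty(\Omega_\Ber, \mu) \subset^E M$ on which $\alpha$ restricts to the given nonsingular Bernoulli action. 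I would then match hypotheses: assumption (i) is verbatim the requirement that $G$ be exact and $G \act \cI$ have finite stabilizers and finitely many orbits, while, recalling from Subsection~\ref{How to use the boundary} that $\supp(g) = \{i \in \cI \mid p_i \neq p_{g \cdot i}\}$, assumption (ii) says exactly that $|\supp(g)| < \infty$ for every $g \in G$. Thus all hypotheses of Theorem~\ref{solid-Bernoulli-thm} hold.

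Granting this, I would use the first half of Theorem~\ref{solid-Bernoulli-thm}, which gives that $A' \cap p(M \rtimes G)p$ is amenable for every projection $p \in M$ and every diffuse $A \subset pMp$ with expectation. By the definition of solidity of an action together with Theorem~\ref{solid-thm} of Section~\ref{Solidity for actions on von Neumann algebras}, this is precisely the assertion that $\alpha \colon G \act M$ is solid. Since $(\Omega_\Ber, \mu)$ has no atoms the algebra $L^\infty(\Omega_\Ber, \mu)$ is diffuse, and since the action is essentially free, the final clause of Lemma~\ref{lem-Bernoulli-embedding} then applies and yields that the Bernoulli action $G \act (\Omega_\Ber, \mu)$ is solid, which is the first assertion.

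For the second assertion I would assume in addition that $G$ is bi-exact and apply the second half of Theorem~\ref{solid-Bernoulli-thm} to conclude that $M \rtimes G$ is solid. Here $E$ is the factorwise conditional expectation onto the diagonals under $M \cong \overline{\bigotimes}_{i \in \cI}(\M_2, \psi_i)$, and $\alpha$ only permutes the tensor factors, so $E$ is $\alpha$-equivariant and $E \rtimes \id$ exhibits $L^\infty(\Omega_\Ber, \mu) \rtimes G$ as a subalgebra with expectation of $M \rtimes G$. I would then invoke the fact that solidity descends along such inclusions: for diffuse $A \subset N := L^\infty(\Omega_\Ber, \mu) \rtimes G$ with expectation, $A$ is diffuse with expectation in $M \rtimes G$, so $A' \cap (M \rtimes G)$ is amenable, and the $A$-bimodular map $E \rtimes \id$ restricts to a conditional expectation of $A' \cap (M \rtimes G)$ onto $A' \cap N$; since amenability passes to ranges of conditional expectations, $A' \cap N$ is amenable, so $L^\infty(\Omega_\Ber, \mu) \rtimes G$ is solid.

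I expect essentially no serious obstacle here, since the analytic heart of the paper has already been spent in the boundary construction and its amenability (Theorem~\ref{thm-boundary-amenability}), the condition (AO) of Proposition~\ref{prop-AO}, and above all the key Lemma~\ref{key lemma}; the present theorem is mostly a matter of translation and bookkeeping. The two points I would treat with care are the identification of hypothesis (ii) with the finiteness of $\supp(g)$, which is exactly the input that feeds Lemma~\ref{key lemma} through Theorem~\ref{solid-Bernoulli-thm}, and the $\alpha$-equivariance of $E$, which is what makes the crossed-product expectation available for the descent argument in the bi-exact case.
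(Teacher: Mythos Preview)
Your proposal is correct and follows essentially the same route as the paper: set $X_0:=\cI$, embed the Bernoulli system into $\Gamma_\anti(H_\R,U)$ via Lemma~\ref{lem-Bernoulli-embedding}, identify hypothesis~(ii) with $|\supp(g)|<\infty$, apply Theorem~\ref{solid-Bernoulli-thm}, and in the bi-exact case descend solidity from $M\rtimes G$ to $L^\infty(\Omega_\Ber,\mu)\rtimes G$ along the expectation. You supply more detail than the paper (the $\alpha$-equivariance of $E$ and the explicit descent of solidity to subalgebras with expectation), and your invocation of Theorem~\ref{solid-thm} at the point where you conclude $\alpha$ is solid is superfluous (the definition with $p=1$ suffices), but none of this departs from the paper's argument.
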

\begin{proof}
	Putting $X_0:=\cI$, by Lemma \ref{lem-Bernoulli-embedding}, we can construct $G\act^\alpha \Gamma _\anti(H_\R,U)$ and we have only to show that $\alpha$ is 
solid. By construction, $|\supp(g)|<\infty$ for all $g\in  G $, hence we can apply Theorem \ref{solid-Bernoulli-thm} to $\alpha$, so that it is solid.

If $ G $ is bi-exact, then 
$ \Gamma_\anti(H_\R,U)\rtimes  G $ is solid, hence its subalgebra $L^\infty(\Omega_\Ber,\mu)\rtimes  G $ with expectation is also solid.
\end{proof}

\subsection{Proof of Lemma \ref{key lemma}}\label{Proof of key lemma}

In this subsection, we prove Lemma \ref{key lemma}. We keep the notation from Notation \ref{notation}.

We first recall the following well known fact: for any $\xi_1,\ldots,\xi_n\in H$, there is a unique $W(\widehat{\xi}_1\wedge \cdots \wedge \widehat{\xi}_n)\in  \Gamma_\anti(H_\R,U)$ such that 
\[
	W(\widehat{\xi}_1\wedge \cdots \wedge \widehat{\xi}_n)\, \Omega = \widehat{\xi}_1\wedge \cdots \wedge \widehat{\xi}_n .
\]
Note that this notation generalizes our previous one $W(\widehat{\xi})$ for $\xi \in H$, which corresponds to the case $n=1$. We have the following formula (see for example \cite[Lemma 1.3]{Hi02} whose proof works for $q=-1$). 
\begin{thm}
For any $n\in \N$ and $\xi_1,\ldots,\xi_{n}\in H$, we have
\begin{align}
	W(\widehat{\xi}_1)\cdots W(\widehat{\xi}_{n})
	=\sum_{\mathcal W}C(\mathcal W)\left(\prod_{r=1}^{\ell}\langle \widehat{I\xi}_{i(r)},\widehat{\xi}_{j(r)}\rangle\right)W(\widehat{\xi}_{k(1)}\wedge \cdots \wedge \widehat{\xi}_{k(m)}), \label{wick}
\end{align}
where the summation is over all partitions $\mathcal W=\{ \{i(r),j(r)\}_{1\leq r\leq \ell}, \{k(p)\}_{1\leq p\leq m} \}$ of $\{1,2,\ldots,n\}$ having blocks of one or two elements such that
\[
	\ell,m\geq 0,\quad 2\ell+m=n,\quad i(r)<j(r)\quad \text{for } 1\leq r\leq \ell,\ k(1)<\cdots<k(m),
\]
and $C(\cW):=\frac{1}{\sqrt{m!}}(-1)^{c(\cW)}$, where $c(\mathcal W)$ is given by 
\begin{align*}
	 c(\mathcal W)
	&=\sharp\{(r,s)\mid 1\leq r<s\leq \ell,\ i(r)<i(s)<j(r)<j(s)\} \\
	&\quad + \sharp\{(r,p)\mid 1\leq r\leq \ell,\ 1\leq p \leq m,\ i(r)<k(p)<j(r)\}.
\end{align*}
\end{thm}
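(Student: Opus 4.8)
The plan is to prove \eqref{wick} by induction on $n$, reducing the whole statement to a single ``one creation/annihilation operator acting on a Wick monomial'' identity that can be read off directly from the explicit formulas for $\ell(\xi)$ and $\ell(\xi)^*$ on $\cF_\anti$. The structural fact I would use throughout is that $\Omega$ is separating for $\Gamma_\anti(H_\R,U)$, so an element of $\Gamma_\anti(H_\R,U)$ is uniquely determined by its image under the map $x\mapsto x\Omega$. Since $W(\widehat{\zeta})=\ell(\widehat{\zeta})+\ell(\widehat{I\zeta})^*$ acts on $\cF_\anti$ by an explicit creation-plus-annihilation operator, every operator identity in $\Gamma_\anti(H_\R,U)$ that I need can be checked by applying both sides to $\Omega$ and comparing the resulting vectors in $\cF_\anti$.

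First I would establish the key recursion: for $\zeta,\eta_1,\dots,\eta_m\in H$,
\begin{align*}
	W(\widehat{\zeta})\,W(\widehat{\eta}_1\wedge\cdots\wedge\widehat{\eta}_m)
	&=\sqrt{m+1}\,W(\widehat{\zeta}\wedge\widehat{\eta}_1\wedge\cdots\wedge\widehat{\eta}_m)\\
	&\quad+\frac{1}{\sqrt{m}}\sum_{s=1}^m(-1)^{s-1}\langle\widehat{\eta}_s,\widehat{I\zeta}\rangle\,
	W(\widehat{\eta}_1\wedge\cdots\wedge\widehat{\eta}_{s-1}\wedge\widehat{\eta}_{s+1}\wedge\cdots\wedge\widehat{\eta}_m).
\end{align*}
This follows from the separating-vacuum remark: applying $W(\widehat{\zeta})=\ell(\widehat{\zeta})+\ell(\widehat{I\zeta})^*$ to the vector $\widehat{\eta}_1\wedge\cdots\wedge\widehat{\eta}_m=W(\widehat{\eta}_1\wedge\cdots\wedge\widehat{\eta}_m)\Omega$, the creation part produces $\sqrt{m+1}\,\widehat{\zeta}\wedge\widehat{\eta}_1\wedge\cdots\wedge\widehat{\eta}_m$ and the annihilation part produces exactly the displayed alternating sum, by the formulas for $\ell(\cdot)$ and $\ell(\cdot)^*$ in Subsection~\ref{Fock spaces and associated von Neumann algebras}.

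Then I would run the induction. Writing $W(\widehat{\xi}_1)\cdots W(\widehat{\xi}_n)=W(\widehat{\xi}_1)\bigl[W(\widehat{\xi}_2)\cdots W(\widehat{\xi}_n)\bigr]$, I expand the bracket by the inductive hypothesis over all admissible partitions $\cW'$ of $\{2,\dots,n\}$, and apply the recursion to each resulting monomial $W(\widehat{\xi}_{k(1)}\wedge\cdots\wedge\widehat{\xi}_{k(m')})$. The creation term prepends the index $1$ as a new leading singleton, while each annihilation term pairs $1$ with one surviving singleton $k(p)$, turning it into a two-element block $\{1,k(p)\}$ with $1<k(p)$. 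This sets up a bijection between admissible partitions $\cW$ of $\{1,\dots,n\}$ and pairs (partition $\cW'$ of $\{2,\dots,n\}$, attachment of $1$), under which every partition occurring in \eqref{wick} is produced exactly once; the factor $\langle\widehat{\eta}_s,\widehat{I\zeta}\rangle$ becomes, for $\zeta=\xi_1$ and $\eta_s=\xi_{k(p)}$, the contraction $\langle\widehat{I\xi}_{1},\widehat{\xi}_{k(p)}\rangle$ of \eqref{wick} (using the symmetry of the quasi-free two-point function to match the inner-product slots). The normalization constants $\sqrt{m'+1}$ and $1/\sqrt{m'}$ telescope with $C(\cW')$ to reproduce the factorial prefactor of $C(\cW)$, and the base cases $n=1,2$ fix its exact form.

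The main obstacle, and the only genuinely delicate point, is the sign bookkeeping: showing that the accumulated signs reproduce $(-1)^{c(\cW)}$ with $c(\cW)$ given by the stated crossing and nesting counts. Since $1$ is the minimum of $\{1,\dots,n\}$, any new block $\{1,k(p)\}$ is forced to have $i(r)=1$, so its crossings with other pairs and its nestings over singletons are governed by the position $p$ of $k(p)$ among the surviving singletons; I would verify that the sign $(-1)^{s-1}=(-1)^{p-1}$ coming from the recursion is precisely the increment $c(\cW)-c(\cW')$, by tracking how prepending $1$ changes the two counts in the definition of $c(\cW)$. This is exactly the computation carried out in \cite[Lemma~1.3]{Hi02} for the $q$-Fock space, where each partition carries the weight $q^{c(\cW)}$; the combinatorial argument is insensitive to the value of $q$ and specializes verbatim to $q=-1$, giving the sign $(-1)^{c(\cW)}$ of \eqref{wick}.
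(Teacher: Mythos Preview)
Your approach is correct and coincides with the paper's: the paper does not give an independent proof but simply cites \cite[Lemma 1.3]{Hi02} and remarks that the same argument works for $q=-1$, which is precisely the induction you have outlined. One small caution: the step where you invoke ``symmetry of the quasi-free two-point function'' to pass from $\langle\widehat{\eta}_s,\widehat{I\zeta}\rangle$ to $\langle\widehat{I\xi}_{1},\widehat{\xi}_{k(p)}\rangle$ deserves care, since $\langle\widehat{I\xi},\widehat{\eta}\rangle$ is not in general symmetric in $\xi,\eta$ for nontrivial $A$; tracking the inner-product slots directly (rather than appealing to a symmetry) is the cleaner way to match the stated formula.
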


Recall that $\pi_g^\anti$ is given by 
	$$ \pi_g^\anti \, (\delta_{x_1}\wedge \cdots\wedge \delta_{x_n}) = \delta_{g\cdot x_1}\wedge \cdots\wedge \delta_{g\cdot x_n},\quad  \text{for any distinct }x_1,\ldots,x_n\in X.$$
To compare it with $U_g^\alpha$, it is convenient to use a new operator $V_g$ for $g\in  G $ defined by
	$$ V_g\, (\widehat{\delta}_{x_1}\wedge \cdots\wedge \widehat{\delta}_{x_n}):= \widehat{\delta}_{g\cdot x_1}\wedge \cdots\wedge \widehat{\delta}_{g\cdot x_n},\quad \text{for any distinct }x_1,\ldots,x_n\in X.$$
This is defined on the algebraic level.

The next lemma is straightforward.

\begin{lem}\label{RN-bdd-lem}
	Fix $g\in  G $ and assume $|\mathrm{supp}(g)|<\infty$.
\begin{enumerate}
	\item The Radon--Nikodym $h_g$ is contained in $\mathrm{C}^*\{ c_x^*c_x, c_xc_x^*\mid x\in X_0 \}$.

	\item There is an invertible $f\in \ell^\infty(\sfZ)^+$ such that $V_g = f \pi^\anti_g$. In particular $V_g$ is bounded and invertible in $\B(\cF_\anti)$.

\end{enumerate}
\end{lem}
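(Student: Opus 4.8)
The plan is to handle the two assertions separately; both rest on the single observation that $|\mathrm{supp}(g)|<\infty$ collapses the a priori infinite products involved to finite ones.

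For item~1, I would begin from the explicit expression for $h_g$ recorded in Lemma~\ref{lem-RN-Strepn},
\[
	h_g = \prod_{x\in X_0}\left(\frac{p_{g^{-1}\cdot x}}{p_x}\,c_x^*c_x + \frac{q_{g^{-1}\cdot x}}{q_x}\,c_xc_x^*\right),
\]
and note that $|\mathrm{supp}(g^{-1})|=|\mathrm{supp}(g)|<\infty$, since $\mathrm{supp}(g^{-1})=g\cdot\mathrm{supp}(g)$. For each $x\notin\mathrm{supp}(g^{-1})$ one has $p_x=p_{g^{-1}\cdot x}$ and $q_x=q_{g^{-1}\cdot x}$, so the corresponding factor reduces to $c_x^*c_x+c_xc_x^*=1$ by the self-dual CAR relation $c_xc_x^*+c_x^*c_x=\langle\delta_x,\delta_x\rangle=1$. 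Hence the product is in fact finite, ranging over $x\in\mathrm{supp}(g^{-1})$, and each of its factors lies in $\mathrm{C}^*\{c_x^*c_x,c_xc_x^*\mid x\in X_0\}$; this places $h_g$ in that C$^*$-algebra.

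For item~2, recall that $\widehat\delta_x=d(x)\delta_x$ with $d(x)>0$, where $d(i)=\sqrt2/\sqrt{1+\mu_i^{-1}}$ and $d(Ii)=\sqrt2/\sqrt{1+\mu_i}$ for $i\in X_0$. Since the vectors $\delta_{x_1}\wedge\cdots\wedge\delta_{x_n}$ (over $n$ and distinct $x_1,\dots,x_n\in X$) span a dense subspace of $\cF_\anti$, I would compute $V_g$ directly on them: writing $\widehat\delta_{x_1}\wedge\cdots\wedge\widehat\delta_{x_n}=\bigl(\prod_i d(x_i)\bigr)\,\delta_{x_1}\wedge\cdots\wedge\delta_{x_n}$ and using the defining relation for $V_g$ gives
\[
	V_g\bigl(\delta_{x_1}\wedge\cdots\wedge\delta_{x_n}\bigr)=\left(\prod_{i=1}^n\frac{d(g\cdot x_i)}{d(x_i)}\right)\delta_{g\cdot x_1}\wedge\cdots\wedge\delta_{g\cdot x_n}.
\]
This dictates the choice $f([y_1,\dots,y_n]):=\prod_{i=1}^n d(y_i)/d(g^{-1}\cdot y_i)$ (and $f(\star):=1$): substituting $y_i=g\cdot x_i$ turns the displayed formula into $V_g=f\,\pi^\anti_g$. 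The value is symmetric in $y_1,\dots,y_n$, hence descends to a well-defined positive function on $\sfZ$.

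The only point that uses the hypothesis, and the step I would treat most carefully, is that $f$ and $f^{-1}$ are bounded \emph{uniformly in $n$}. Tracing the two cases $y\in X_0$ and $y\in IX_0$, the ratio $d(y)/d(g^{-1}\cdot y)$ equals $1$ exactly when the $A^{-1}$-eigenvalues at $y$ and at $g^{-1}\cdot y$ coincide, i.e.\ precisely when $y$ lies outside the finite set $\mathrm{supp}(g^{-1})\sqcup I\,\mathrm{supp}(g^{-1})$. As the $y_i$ are distinct, at most $2|\mathrm{supp}(g^{-1})|$ of the factors defining $f([y_1,\dots,y_n])$ differ from $1$, and each such factor is one of finitely many positive reals; this yields a bound on $\|f\|_\infty$ independent of $n$, and the same argument applied to the reciprocals bounds $\|f^{-1}\|_\infty$. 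Thus $f$ is an invertible element of $\ell^\infty(\sfZ)^+$, and since $\pi^\anti_g$ is unitary, $V_g=f\,\pi^\anti_g$ is bounded and invertible in $\B(\cF_\anti)$. The essential mechanism in both parts is the same: finiteness of $\mathrm{supp}(g)$ forces the infinite products to have only finitely many nontrivial factors, uniformly in the degree.
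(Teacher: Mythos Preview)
Your proposal is correct and follows essentially the same approach as the paper's proof: item~1 via the explicit formula in Lemma~\ref{lem-RN-Strepn} and the collapse of all but finitely many factors to $1$, and item~2 via the relation $\widehat\delta_x=d(x)\delta_x$ and the resulting multiplicative factor $\prod_i d(g\cdot x_i)/d(x_i)$. Your version is in fact more detailed than the paper's (which simply calls item~1 ``trivial'' and asserts boundedness of $f$ from $|\supp(g)|<\infty$), and your formula $f([y_1,\dots,y_n])=\prod_i d(y_i)/d(g^{-1}\cdot y_i)$ is the precise one compatible with $V_g=f\,\pi_g^\anti$.
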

\begin{proof}
	1. This is trivial by the formula of $h_g$ given in Lemma \ref{lem-RN-Strepn}.

	2. Put $d(x) := \frac{\sqrt{2}}{\sqrt{1+\mu_x^{-1}}}$ and $d(Ix):=\frac{\sqrt{2}}{\sqrt{1+\mu_x}}$ for $x\in X_0$, and observe that $\widehat{\delta}_x = d(x)\delta_x$ for all $x\in X$. For any distinct $x_1,\ldots,x_n\in X$, with $d:=d(x_1)\cdots d(x_n)$ and $d_g:=d(g\cdot x_1) \cdots d(g\cdot x_n)$, it is easy to see that
\begin{align*}
	V_g(\delta_{x_1}\wedge \cdots \wedge \delta_{x_n})
	=d^{-1}d_g \pi_g^\anti (\delta_{x_1}\wedge \cdots \wedge \delta_{x_n}).
\end{align*}
Hence if we put $f([x_1,\ldots,x_n])=d^{-1}d_g$, then since $|\supp(g)|<\infty$, $f$ defines an invertible element in $ \ell^\infty(\sfZ)^+$.
\end{proof}

	We are interested in the difference between $U_g^\alpha$ and $V_g$. For $x\in X$, it holds that 
\begin{align*}
	 U_g^\alpha \widehat{\delta}_x 
	&= U_g^\alpha W(\widehat{\delta}_x)\Omega 
	=  \alpha_g(W(\widehat{\delta}_x))h_g^{1/2}\Omega \\
	&=  Jh_g^{1/2}J W(\widehat{\delta}_{g\cdot x})\Omega 
	=Jh_g^{1/2}J \widehat{\delta}_{g\cdot x}\\
	&=Jh_g^{1/2}J V_g\widehat{\delta}_{x}.
\end{align*}
This means $U_g^\alpha = Jh_g^{1/2}JV_g$ on $\ell^2(X)$. To prove Lemma \ref{key lemma}, we need a similar result on $\ell^2(X)^{\wedge n}$, but it is not easy because we do not know the behavior of $U_g^\alpha$ on $\ell^2(X)^{\wedge n}$. Indeed, it often happens that
\begin{align*}
	 \alpha_g( W(\widehat{\delta}_{x_1}\wedge \cdots \wedge \widehat{\delta}_{x_n})) \neq W(\widehat{\delta}_{g\cdot x_1}\wedge \cdots \wedge \widehat{\delta}_{g\cdot x_n})
\end{align*}
and therefore the above computation does not work. We have to discuss this problem.

\begin{notation}\upshape
For notation simplicity, we identify $\delta_x\in \ell^2(X)$ and $x\in X$, and write $gx:=g\cdot x$ for $g\in  G $ and $x\in X$. For example, $V_g$ can be written as
	$$V_g (\widehat{  x}_1  \wedge  \dots \wedge \widehat { x}_n  )= 
\widehat{ g x}_1  \wedge  \dots \wedge \widehat {g x}_n  .$$
\end{notation}

\begin{lem}\label{lem-forkeylemma1}
For any $g \in  G $, and $F = \{ x_1, \dots, x_n \} \subset X$ such that $F\cap IF  \cap \supp (g) = \emptyset$,
it follows that
	$$(U_g^\alpha  - J h_g^{1/2} J V_g) (\widehat{ x }_1 \wedge \dots \wedge \widehat { x }_n) = 0.$$
\end{lem}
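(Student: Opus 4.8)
The plan is to reduce the asserted vector equality to the operator-level identity
\[
	\alpha_g\bigl(W(\widehat{x}_1\wedge\cdots\wedge\widehat{x}_n)\bigr)=W(\widehat{gx}_1\wedge\cdots\wedge\widehat{gx}_n),
\]
valid under the hypothesis $F\cap IF\cap\supp(g)=\emptyset$, and then to prove this identity by induction on $n=|F|$ via the Wick formula \eqref{wick}. First I would unwind both sides of the vector statement. Since $\widehat{x}_1\wedge\cdots\wedge\widehat{x}_n=W(\widehat{x}_1\wedge\cdots\wedge\widehat{x}_n)\Omega$, Lemma \ref{lem-RN-Strepn} gives $U_g^\alpha(\widehat{x}_1\wedge\cdots\wedge\widehat{x}_n)=\alpha_g(W(\widehat{x}_1\wedge\cdots\wedge\widehat{x}_n))h_g^{1/2}\Omega$. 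On the other side $V_g(\widehat{x}_1\wedge\cdots\wedge\widehat{x}_n)=W(\widehat{gx}_1\wedge\cdots\wedge\widehat{gx}_n)\Omega$, and since $Jh_g^{1/2}J\in M'$ commutes with $M\ni W(\widehat{gx}_1\wedge\cdots\wedge\widehat{gx}_n)$ while $Jh_g^{1/2}J\Omega=h_g^{1/2}\Omega$ (exactly as in the $n=1$ computation preceding the lemma), I obtain $Jh_g^{1/2}JV_g(\widehat{x}_1\wedge\cdots\wedge\widehat{x}_n)=W(\widehat{gx}_1\wedge\cdots\wedge\widehat{gx}_n)h_g^{1/2}\Omega$. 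Hence the difference in the statement equals $[\alpha_g(W(\widehat{x}_1\wedge\cdots\wedge\widehat{x}_n))-W(\widehat{gx}_1\wedge\cdots\wedge\widehat{gx}_n)]h_g^{1/2}\Omega$, which vanishes once the displayed operator identity holds.

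For the induction I would use that $\alpha_g(W(\widehat{x}_i))=W(\widehat{gx}_i)$ for every $x_i\in X$, so applying the $\ast$-homomorphism $\alpha_g$ to \eqref{wick} for the tuple $(x_1,\dots,x_n)$ turns the left-hand side into $W(\widehat{gx}_1)\cdots W(\widehat{gx}_n)$. Solving \eqref{wick} for its top term $W(\widehat{x}_1\wedge\cdots\wedge\widehat{x}_n)$ (coefficient $\tfrac{1}{\sqrt{n!}}$) and applying $\alpha_g$, the lower-order wedge factors $W(\widehat{x}_{k(1)}\wedge\cdots\wedge\widehat{x}_{k(m)})$ with $m<n$ correspond to subsets $F'\subset F$, which inherit the hypothesis since $F'\cap IF'\subset F\cap IF$; so by induction $\alpha_g$ sends each such factor to $W(\widehat{gx}_{k(1)}\wedge\cdots\wedge\widehat{gx}_{k(m)})$. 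Comparing the resulting expression with \eqref{wick} written for the translated tuple $(gx_1,\dots,gx_n)$, the two agree term by term provided the contraction coefficients match, i.e.\ for every partition $\mathcal W$,
\[
	\prod_{r=1}^{\ell}\langle\widehat{\delta}_{Ix_{i(r)}},\widehat{\delta}_{x_{j(r)}}\rangle=\prod_{r=1}^{\ell}\langle\widehat{\delta}_{Igx_{i(r)}},\widehat{\delta}_{gx_{j(r)}}\rangle .
\]

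This coefficient matching is the crux, and the hypothesis enters precisely here. With $d(\cdot)$ as in the proof of Lemma \ref{RN-bdd-lem}, a single factor $\langle\widehat{\delta}_{Ix_{i(r)}},\widehat{\delta}_{x_{j(r)}}\rangle=d(Ix_{i(r)})\,d(x_{j(r)})\,\langle\delta_{Ix_{i(r)}},\delta_{x_{j(r)}}\rangle$ is nonzero only when $x_{j(r)}=Ix_{i(r)}$, in which case it equals $d(Ix_{i(r)})^2$; the same computation for the translated tuple yields $d(Igx_{i(r)})^2$, using that $g$ commutes with $I$. When $x_{j(r)}=Ix_{i(r)}$ both $x_{i(r)}$ and $Ix_{i(r)}$ lie in $F\cap IF$, so the $X_0$-representative $w_0\in\{x_{i(r)},Ix_{i(r)}\}\cap X_0$ satisfies $w_0\in F\cap IF\cap X_0$; the hypothesis forces $w_0\notin\supp(g)$, whence $\mu_{x_{i(r)}}=\mu_{gx_{i(r)}}$ and therefore (since $\mu_{Iy}=\mu_y^{-1}$) also $\mu_{Ix_{i(r)}}=\mu_{Igx_{i(r)}}$, giving $d(Ix_{i(r)})=d(Igx_{i(r)})$. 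Thus every nonzero contraction is preserved by $g$, the coefficients match, and the induction closes, the base cases $n\le 1$ being the homomorphism property $\alpha_g(W(\widehat{x}_1))=W(\widehat{gx}_1)$ together with the fact that $F\cap IF=\emptyset$ for singletons as $I$ is fixed-point free. The main obstacle is the bookkeeping in \eqref{wick} combined with the verification that, under $F\cap IF\cap\supp(g)=\emptyset$, every contraction occurring in the expansion is supported off $\supp(g)\cup I\supp(g)$; this is exactly where the support hypothesis is indispensable, consistent with the observation that without it one has $\alpha_g(W(\widehat{x}_1\wedge\cdots))\neq W(\widehat{gx}_1\wedge\cdots)$ in general.
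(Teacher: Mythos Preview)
Your argument is correct and follows essentially the same route as the paper: induction on $n$ via the Wick formula \eqref{wick}, with the key step being that under the hypothesis $F\cap IF\cap\supp(g)=\emptyset$ every nonzero contraction $\langle\widehat{Ix}_{i(r)},\widehat{x}_{j(r)}\rangle$ forces $x_{i(r)}\notin\supp(g)\cup I\supp(g)$, so the coefficients are $g$-invariant. The only organizational difference is that you first reduce to the operator identity $\alpha_g(W(\widehat{x}_1\wedge\cdots\wedge\widehat{x}_n))=W(\widehat{gx}_1\wedge\cdots\wedge\widehat{gx}_n)$ and prove that by induction, whereas the paper stays at the vector level throughout, expanding both $V_g(\widehat{x}_1\wedge\cdots)$ and $Jh_g^{-1/2}JU_g^\alpha(\widehat{x}_1\wedge\cdots)$ separately via \eqref{wick} and subtracting; the inductive content and the use of the support hypothesis are identical.
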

\begin{proof}
We prove the assertion by induction on $n$. The case $n=0$ is  trivial and the case $n=1$ was observed above. 
For general $n \geq 2$, we denote the partition of $\{1,\ldots,n\}$ into $n$ singletons by $\cW_n$.
Then, by Equation (\ref{wick})
one has
\begin{align*}
	 V_g ( \widehat{ x }_1 \wedge \dots \wedge \widehat{ x }_n )
	&	= (\widehat{ gx }_1 \wedge \dots \wedge \widehat{ gx }_n )\\
	&	= W(\widehat{gx}_1)\cdots W(\widehat{gx}_n) \Omega \\
	&\quad -\sum_{\cW = \{i,j,k \} \neq \cW_n} C(\cW)
		\left(
			\prod_{ r = 1 }^{ \ell } \langle \widehat{Igx}_{i(r)},\widehat{gx}_{j(r)}\rangle
		\right)
 		 V_g (\widehat{ x }_{ k(1) } \wedge \dots \wedge \widehat { x }_{ k(m) } ).
\end{align*}
We next apply $Jh_g^{-1/2}JU_g^\alpha $ to Equation (\ref{wick}) (with $\Omega$), then
\begin{align*}
	Jh_g^{-1/2}J U_g^\alpha (\widehat{ x }_1 \wedge \dots \wedge \widehat { x }_n )
	&= W(\widehat{gx}_1)\cdots W(\widehat{gx}_n) \Omega \\
	& \quad -\sum_{\mathcal W = \{i,j,k \} \neq \cW_n} C(\cW)
		\left(
			\prod_{ r = 1 }^{ \ell } \langle \widehat{Ix}_{i(r)},\widehat{x}_{j(r)}\rangle
		\right)
 		J h_g^{ -1/2 } J  U_g^\alpha (\widehat{ x }_{k(1)} \wedge \dots \wedge \widehat { x }_{ k(m) } ),
\end{align*}
where we used the equation 
	$$Jh_g^{-1/2}J U_g^\alpha W(\widehat{x}_1)\cdots W(\widehat{x}_n) \Omega = \alpha_g(W(\widehat{x}_1))\cdots \alpha_g(W(\widehat{x}_n)) \Omega = W(\widehat{gx}_1)\cdots W(\widehat{gx}_n) \Omega.$$

Fix $\cW = \{ i,j,k \} \neq \cW_n$.
By our assumption on $F$, if $Ix_{ i(r) } = x_{ j(r) }$ holds for some $r$, then $x_{ i(r) }, x_{ i(r) } \notin \supp (g) \cup I \supp (g)$.
This implies that $\langle \widehat{Ix}_{i(r)},\widehat{x}_{j(r)}\rangle = \langle \widehat{Igx}_{i(r)},\widehat{gx}_{j(r)}\rangle$ holds for all $r$ (to see this, compare scalars $d(y)>0$ given by $\widehat{\delta}_y = d(y)\delta_y$ for $y\in X$). 
Hence, letting $D_\cW := C(\cW)\prod_{ r = 1 }^{ \ell } \langle \widehat{Ix}_{i(r)},\widehat{x}_{j(r)}\rangle= C(\cW)\prod_{ r = 1 }^{ \ell } \langle \widehat{Igx}_{i(r)},\widehat{gx}_{j(r)}\rangle$, we get
\[
	(J h_g^{ -1/2 } J U_g^\alpha  - V_g) (\widehat{ x }_1 \wedge  \dots \wedge \widehat { x }_n )
	= \sum_{\mathcal W = \{i,j,k \} \neq \cW_n} D_\cW ( V_g - J h_g^{ -1/2 } J U_g^\alpha) (\widehat{ x }_{ k(1) } \wedge \dots \wedge \widehat { x }_{k(m)} ).
\]
Observe that in this equation, any $m$ appearing in the right hand side satisfies $m \leq n-2$, hence by the hypothesis of the induction, the right hand side is $0$. We get the conclusion of the induction on $n$.
\end{proof}
Fix $g \in  G $.
For any finite subset $F \subset X$ we set
\[
	A_{g, F} := \{ x \in F \mid x \in \supp (g), I x \in F \} = \supp(g) \cap F \cap IF.
\]
The previous lemma shows that $U_g^\alpha = Jh_g^{1/2}JV_g$ holds on vectors coming from $F$ such that $A_{g,F}=\emptyset$. We have to study the case $A_{g,F}\neq \emptyset$. 

For any finite subset $F = \{x_1, \dots, x_n \} \subset \supp (g)$, define $K_{g, F}$ as the closed subspace of $\cF_\anti$ generated by
\begin{align*}
	x_1 \wedge I x_1 \wedge \dots \wedge x_n \wedge I x_n \wedge y_1 \wedge \dots \wedge y_m ,
\end{align*}
for $m \geq 0$, distinct $y_1,\ldots,y_m\in X \setminus (F\cup IF)$ such that $A_{g,\{y_1,\ldots,y_m\}} = \emptyset $. Here $m=0$ means $y_1 \wedge \dots \wedge y_m = \Omega$ and we use the convention $\eta\wedge\Omega = \eta$ for all $\eta$. Note that $F=\emptyset$ also makes sense (i.e.\ $x_k,Ix_k$ do not appear). We denote by $P_{g,F}$ the orthogonal projection onto $K_{g, F}$. 

\begin{lem}\label{lem-forkeylemma2}
For any $g\in G$, it holds that
	$$1=\sum_{F\subset \supp(g)}P_{g,F},\quad \text{where $F=\emptyset$ is included}.$$
If $|\supp(g)|<\infty$, then $P_{g,F}$ belongs to $\mathrm{C}^*\{ \ell(x) \mid x\in X \}$ for all subsets $F\subset \supp(g)$. 
\end{lem}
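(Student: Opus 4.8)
The plan is to read both assertions off a decomposition of $\cF_\anti$ along its natural orthogonal wedge basis. Write $\mathcal{B}:=\mathrm{C}^*\{\ell(x)\mid x\in X\}$ for the CAR algebra. For a finite subset $S\subset X$, let $e_S:=\delta_{z_1}\wedge\cdots\wedge\delta_{z_k}$ denote the wedge of the distinct elements of $S=\{z_1,\dots,z_k\}$ taken in some fixed order (with $e_\emptyset:=\Omega$); these vectors form an orthogonal basis of $\cF_\anti$, and $e_S$ depends only on $S$ up to a sign. To each $S$ I associate the finite set $F(S):=\{x\in\supp(g)\mid x\in S\text{ and }Ix\in S\}\subset\supp(g)$, which records exactly which pairs $\{x,Ix\}$ with $x\in\supp(g)$ are simultaneously present in $S$.

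First I would prove that $K_{g,F}$ is precisely the closed span of those $e_S$ with $F(S)=F$. For one inclusion, each generating vector of $K_{g,F}$ has support $S=(F\cup IF)\sqcup\{y_1,\dots,y_m\}$, and one checks $F(S)=F$: the elements of $F$ contribute their pairs, while no $y_j\in\supp(g)$ can have $Iy_j\in S$, since $y_j\notin F\cup IF$ and $A_{g,\{y_1,\dots,y_m\}}=\emptyset$ (here one uses the disjointness $X_0\cap IX_0=\emptyset$ to see $\supp(g)\cap(F\cup IF)=F$). Conversely, given $e_S$ with $F(S)=F$, reorder it (picking up a sign) so that the pairs $\{x,Ix\}_{x\in F}$ come first and the remaining elements $y_1,\dots,y_m$ are those of $S\setminus(F\cup IF)$; the condition $F(S)=F$ then forces $A_{g,\{y_1,\dots,y_m\}}=\emptyset$, so $e_S$ is a generator of $K_{g,F}$. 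Since distinct $S$ give orthogonal $e_S$ and each $e_S$ lies in exactly one $K_{g,F}$, namely $F=F(S)$, the subspaces $\{K_{g,F}\}_{F\subset\supp(g)}$ are mutually orthogonal and their spans exhaust the basis, giving $\sum_{F\subset\supp(g)}P_{g,F}=1$ strongly (only the finitely many $e_S$-contributions matter, and infinite $F$ contribute the zero projection).

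For the second assertion I would use occupation projections. A direct computation with the creation/annihilation formulas shows that $Q_x:=\ell(x)\ell(x)^*\in\mathcal{B}$ is the orthogonal projection onto the closed span of those $e_S$ with $x\in S$, while $\ell(x)^*\ell(x)=1-Q_x$ projects onto $\{x\notin S\}$. All such $Q_x$ are diagonal in the basis $\{e_S\}$, hence mutually commute. Assuming $|\supp(g)|<\infty$, the characterization of $K_{g,F}$ above translates directly into
\[
	P_{g,F}=\Bigl(\prod_{x\in F}Q_xQ_{Ix}\Bigr)\Bigl(\prod_{x\in\supp(g)\setminus F}(1-Q_xQ_{Ix})\Bigr),
\]
a finite product of mutually commuting projections lying in $\mathcal{B}$ (the factors $Q_xQ_{Ix}$ and $1-Q_xQ_{Ix}$ are projections precisely because $Q_x$ and $Q_{Ix}$ commute), whence $P_{g,F}\in\mathcal{B}$.

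The creation-operator computation identifying $Q_x$ and the sign bookkeeping in the reordering are routine. The only place demanding genuine care is the forward-and-backward identification of $K_{g,F}$ with $\overline{\mathrm{span}}\{e_S\mid F(S)=F\}$, where one must combine the hypotheses $F\subset\supp(g)$, $y_j\in X\setminus(F\cup IF)$, and $A_{g,\{y_1,\dots,y_m\}}=\emptyset$ with the disjointness $X_0\cap IX_0=\emptyset$ to pin down membership exactly. I expect this combinatorial verification to be the main obstacle, though it is a finite bookkeeping argument rather than a hard estimate.
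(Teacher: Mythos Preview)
Your proof is correct and takes essentially the same approach as the paper. The paper introduces partial isometries $v_x=\ell(x)\ell(Ix)$ and $w_x$ with $v_xv_x^*+w_xw_x^*=1$ and obtains $P_{g,F}=\prod_{x\in F}v_xv_x^*\prod_{y\in\supp(g)\setminus F}w_yw_y^*$; since $v_xv_x^*=Q_xQ_{Ix}$ and $w_xw_x^*=1-Q_xQ_{Ix}$, this is exactly your formula, and your route via occupation projections is slightly more direct in that it avoids checking $w_x$ is a partial isometry.
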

\begin{proof}
	For any $x \in X_0$, set
\begin{align*}
	v_x := \ell(x) \ell (Ix),\quad \quad
	w_x := \ell (x) \ell (Ix)^* + \ell (Ix) \ell (x)^* + \ell (x)^*\ell(Ix)^*.
\end{align*}
Then $v_x$ is a partial isometry whose range is generated by $x\wedge Ix \wedge y_1\wedge \cdots \wedge y_m$ for all $m\geq0$ and distinct $y_1,\ldots,y_m\in X$ such that $x,Ix\not\in \{y_1,\ldots,y_m\}$. It is easy to see that $w_x$ is also a partial isometry such that $v_xv_x^*+w_xw_x^*=1$. Since the family $\{ v_xv_x^*\}_{x \in X_0}$ mutually commute, it follows that
\[
	1
	= \prod_{ x \in \supp (g) } (v_x v_x^* + w_x w_x^* )
	= \sum_{ F \subset \supp (g) } \left( \prod_{ x \in F } v_x v_x^* \prod_{ y \in \supp (g) \setminus F } w_yw_y^* \right),
\]
where $F$ is possibly empty. Since we know the range of $v_xv_x^*$ and $w_xw_x^*=1-v_xv_x^*$ for $x\in X$, it is straightforward to show that
	$$P_{g, F} = \prod_{x \in F} v_x v_x^*  \prod_{y \in \supp (g) \setminus F} w_yw_y^* \quad \text{for all }F\subset \supp(g).$$
If $|\supp (g)|<\infty$, it is contained in $\mathrm{C}^*\{\ell(x)\mid x\in X\}$.
\end{proof}

\begin{lem}\label{lem-forkeylemma3}
	Let $F = \{ x_1, \dots, x_n \} \subset \supp(g)$ be a finite subset.
Then there exists an invertible element $Z_F \in \mathrm{C}^*(H_\R,U)$  such that, with $r_{F,m}:=\sqrt{m+1}\cdots\sqrt{m+2n}$, 
$$
	Z_F ( y_1  \wedge \dots \wedge y_m )
	= 
	r_{F,m}\, ( \widehat{ x }_1 \wedge \widehat{ Ix }_1 ) \wedge \dots \wedge ( \widehat{ x }_n \wedge \widehat{ Ix }_n ) \wedge
	( y_1 \wedge \dots \wedge y_m ),
$$
for all $m \geq 0$ and $y_1, \dots, y_m  \in X\setminus (F\cup IF)$ such that $A_{g,\{y_1,\ldots,y_m\}}=\emptyset$ (where $m=0$ means $y_1\wedge \cdots \wedge y_m=\Omega$).
\end{lem}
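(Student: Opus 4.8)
The plan is to build $Z_F$ as a product of commuting, invertible ``single-pair creators'' $Z_x$ ($x\in F$), each lying in the CAR algebra $\mathrm{C}^*_\anti(H_\R,U)$. For $x\in X_0$ set
\[
	u_x := c_xc_x^* - c_x^*c_x = 1 - 2c_x^*c_x \in \mathrm{C}^*_\anti(H_\R,U),
\]
which is a self-adjoint unitary because $c_x^2=0$ (CAR, since $\langle\widehat{\delta}_x,\widehat{\delta}_{Ix}\rangle=0$) and $c_xc_x^*+c_x^*c_x=1$. First I would compute the action of $u_x$ on a homogeneous vector $\zeta$ of degree $m$ whose entries all differ from $x$ and $Ix$. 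Expanding $c_x=\tfrac{1}{\sqrt2}(\ell(\widehat{\delta}_x)+\ell(\widehat{\delta}_{Ix})^*)$ and using the anti-commutation relations, on such $\zeta$ every double-annihilation term vanishes, the number terms contribute scalars, and the two surviving creation terms $\ell(\widehat{\delta}_x)\ell(\widehat{\delta}_{Ix})$ (inside $c_xc_x^*$) and $\ell(\widehat{\delta}_{Ix})\ell(\widehat{\delta}_x)$ (inside $c_x^*c_x$) are opposite by anticommutativity; a direct calculation then yields
\[
	u_x\zeta = \sqrt{(m+1)(m+2)}\,(\widehat{\delta}_x\wedge\widehat{\delta}_{Ix}\wedge\zeta) + \kappa_x\,\zeta, \qquad \kappa_x:=\tfrac12\bigl(\|\widehat{\delta}_{Ix}\|^2-\|\widehat{\delta}_x\|^2\bigr),
\]
the coefficient $\sqrt{(m+1)(m+2)}$ being exactly the product of normalizations from the two left creation operators.

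Next I would remove the scalar and verify invertibility. Put $Z_x := u_x - \kappa_x$, so that $Z_x\zeta=\sqrt{(m+1)(m+2)}\,(\widehat{\delta}_x\wedge\widehat{\delta}_{Ix}\wedge\zeta)$ on vectors avoiding $x$ and $Ix$. Writing $\widehat{\delta}_x=d(x)\delta_x$ as in the proof of Lemma \ref{RN-bdd-lem}, one has $\|\widehat{\delta}_x\|^2=d(x)^2$ and $d(x)^2+d(Ix)^2=2$ (equivalently $\kappa_x=\tfrac{1-\mu_x}{1+\mu_x}$), whence $|\kappa_x|<1$. Since $u_x$ is a self-adjoint unitary its spectrum is contained in $\{\pm1\}$, so the spectrum of $Z_x=u_x-\kappa_x$ lies in $\{1-\kappa_x,-1-\kappa_x\}\subset(0,2)\cup(-2,0)$; in particular $0$ is not in the spectrum and $Z_x$ is invertible in $\mathrm{C}^*_\anti(H_\R,U)$.

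Finally I would set $Z_F:=\prod_{x\in F}Z_x$. The elements $c_x^*c_x$ (hence the $Z_x$) for distinct $x\in X_0$ are even and therefore commute, so $Z_F$ is a well-defined invertible element of $\mathrm{C}^*_\anti(H_\R,U)$, independent of the chosen order. Applying the factors successively to $\zeta_0=y_1\wedge\cdots\wedge y_m$ with $y_i\notin F\cup IF$: the innermost $Z_{x_n}$ produces $\widehat{\delta}_{x_n}\wedge\widehat{\delta}_{Ix_n}\wedge\zeta_0$ with factor $\sqrt{(m+1)(m+2)}$, and since at each step the vector remains homogeneous and still avoids the next mode $x_{n-j}$, the factor $Z_{x_{n-j}}$ contributes $\sqrt{(m+2j+1)(m+2j+2)}$. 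These coefficients telescope to $\sqrt{(m+1)(m+2)\cdots(m+2n)}=r_{F,m}$, giving the asserted formula. The two points needing care are the sign bookkeeping in the single-mode computation of $u_x\zeta$ and the elementary bound $|\kappa_x|<1$ that secures invertibility; once these are in place, the telescoping and the reduction to a single mode are routine.
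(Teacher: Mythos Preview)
Your proof is correct and essentially identical to the paper's. The paper computes $c_xc_x^*\xi$ and $c_x^*c_x\xi$ separately and then takes the combination $Z_x:=d(x)^2c_xc_x^*-d(Ix)^2c_x^*c_x$; using $d(x)^2+d(Ix)^2=2$ one checks that your $Z_x=u_x-\kappa_x$ is literally the same element, and your spectral invertibility argument ($\sigma(u_x)\subset\{\pm1\}$, $|\kappa_x|<1$) is just a repackaging of the paper's observation that $c_xc_x^*$ and $c_x^*c_x$ are complementary projections with nonzero coefficients. The product $Z_F=\prod_{x\in F}Z_x$ and the telescoping of the normalization factors are handled the same way in both.
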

\begin{proof}
	Put $d(x) := \frac{\sqrt{2}}{\sqrt{1+\mu_x^{-1}}}$ and $d(Ix):=\frac{\sqrt{2}}{\sqrt{1+\mu_x}}$ for $x\in X_0$, so that $\widehat{\delta}_x = d(x)\delta_x$ for all $x\in X$. It holds that
\begin{align*}
	c_x=\frac{1}{\sqrt{2}}W(\widehat{\delta}_x)=\frac{1}{\sqrt{2}}(d(x)\ell(\delta_x)+d(Ix)\ell(\delta_{Ix})^*),\quad x\in X_0.
\end{align*}
Since $c_x^*=c_{Ix}$ for $x\in X_0$, the same equality holds for all $x\in X$. Fix $x:=x_i\in F$. For any given $\xi := y_1 \wedge \dots \wedge y_m $ in the statement, we have (with $r_k:=\sqrt{k}$)
\begin{align*}
	c_x c_x^* \xi
	& = \frac{1}{2} W(\widehat{x}) r_{m+1}\, d( Ix) Ix \wedge \xi
	= \frac{1}{2} (r_{m+1}r_{m+2}\, \widehat{ x } \wedge \widehat{ Ix } \wedge \xi +  d(Ix)^2 \xi) ; \\
	c_x^* c_x \xi
	& = \frac{1}{2} W(\widehat{x})^*r_{m+1} d(x) x \wedge \xi
	=  \frac{1}{2} (-r_{m+1}r_{m+2}\, \widehat{ x } \wedge \widehat{ Ix } \wedge \xi + d(x)^2 \xi).
\end{align*}
It follows that 
\begin{align*}
	\left(\frac{1}{d(Ix)^2}c_x c_x^*  - \frac{1}{d(x)^2}c_x^*c_x\right)\xi = \frac{r_{m+1}r_{m+2}}{2}\left( \frac{1}{d(Ix)^2}+\frac{1}{d(x)^2} \right)\widehat{ x } \wedge \widehat{ Ix } \wedge \xi.
\end{align*}
Since $\left( \frac{1}{2d(Ix)^2}+\frac{1}{2d(x)^2} \right)^{-1}=d(x)^2d(Ix)^2$ (e.g.\ $d(Ix)=\frac{\sqrt{2\mu_x^{-1}}}{\sqrt{1+\mu_x^{-1}}}$), if we put
	$$Z_x:=d(x)^2c_x c_x^*  - d(Ix)^2c_x^*c_x\in \mathrm{C}^*(H_\R,U),$$
then $Z_x\xi = r_{m+1}r_{m+2}\, (\widehat{ x } \wedge \widehat{ Ix } \wedge \xi)$. 
Note that $Z_x$ is invertible since $c_xc_x^*$ and $c_x^*c_x$ are projections with $c_xc_x^*+c_x^*c_x=1$. Finally it is easy to see that  
\[
	Z_F :=  \prod_{x \in F } Z_x\in \mathrm{C}^*(H_\R,U).
\]
gives the desired element.
\end{proof}

Now we prove the key lemma.

\begin{proof}[Proof of Lemma \ref{key lemma}]
	Fix $g \in  G $ and any nonempty set $F  = \{x_1, \dots, x_n \} \subset \supp (g)$. Take $Z_F$ as in Lemma \ref{lem-forkeylemma3} and observe that $\alpha_g(Z_F)=U_g^\alpha Z_F U_g^{\alpha *}$ since $Z_F\in  \mathrm{C}^*_\anti(H_\R,U)$. We claim that 
	$$ U_g^\alpha  P_{g,F} = J h_g^{1/2}J \alpha_g (Z_F) V_g Z_F^{ -1 } P_{ g, F }.$$
To see this, take any $y_1,\ldots,y_m\in X\setminus ( F\cup IF )$ such that $A_{g,\{y_1,\ldots,y_m\}}=\emptyset$ and put $\xi:=y_1\wedge \cdots \wedge y_m$ (possibly $m=0$ and $\xi=\Omega$). Then, we compute that 
\begin{align*}
	&  U_g^\alpha 
		(\widehat{ x }_1 \wedge \widehat{ Ix }_1 ) \wedge \dots \wedge (\widehat{ x }_n \wedge \widehat{ Ix }_n )
		\wedge \xi \\
	& \quad = r_{F,m}^{-1} U_g^\alpha  Z_F \,  \xi  \quad \quad \quad \quad \quad \quad \quad \text{(by Lemma \ref{lem-forkeylemma3})}\\
	& \quad = r_{F,m}^{-1} \alpha_g(Z_F) U_g^\alpha \, \xi \\
	& \quad = r_{F,m}^{-1} \alpha_g(Z_F) J h_g^{1/2}JV_g \, \xi  \quad \quad\quad  \text{(by Lemma \ref{lem-forkeylemma1})}\\
	& \quad = r_{F,m}^{-1} J h_g^{1/2}J\alpha_g(Z_F) V_g Z_F^{-1} Z_F \, \xi \\
	& \quad = J h_g^{1/2}J\alpha_g(Z_F) V_g Z_F^{-1} (\widehat{ x }_1 \wedge \widehat{ Ix }_1 ) \wedge \dots \wedge (\widehat{ x }_n \wedge \widehat{ Ix }_n )
		\wedge \xi.
\end{align*}
Since such $(\widehat{ x }_1 \wedge \widehat{ Ix }_1 ) \wedge \dots \wedge (\widehat{ x }_n \wedge \widehat{ Ix }_n )\wedge \xi$ generate the range of $P_{g,F}$, the claim is proven.

Since $U_g^\alpha P_{g,\emptyset} = Jh_g^{1/2}J V_g P_{g,\emptyset}$ by Lemma \ref{lem-forkeylemma1}, combined with Lemma \ref{lem-forkeylemma2}, we have
\[
	U_g^\alpha = J h_g^{1/2} J V_gP_{g,\emptyset} + \sum_{\emptyset \neq F \subset \supp (g) } J h_g^{1/2} J \alpha_g (Z_F) V_g Z_F^{-1} P_{g, F}.
\]
Since the sum is finite and by Lemma \ref{RN-bdd-lem}, this is the desired decomposition. 
\end{proof}

\subsection{Boundary for Bernoulli actions}
\label{Boundary for Bernoulli actions}

We keep the setting from Notation \ref{notation}. We have constructed a boundary $C(\partial \sfZ)$ which leads condition AO for the crossed product $\mathrm{C}^*_\anti(H_\R,U)\rtimes_{\rm red}  G $, as in Proposition \ref{prop-AO}. Since $\mathrm{C}^*_\anti(H_\R,U)\rtimes_{\rm red}  G $ contains a copy of $C(\Omega_\Ber)\rtimes_{\rm red} G  $, it is natural to ask if we have a boundary constructed on $L^2(\Omega_\Ber,\mu)$. In this last subsection, we explain a construction of such a boundary. 

Recall that we have a state preserving embedding with expectation $E$
	$$ L^\infty(\Omega_\Ber,\mu) \simeq \mathrm{W}^*\{ c_x^*c_x \mid x\in X \}\subset^E  \Gamma_\anti(H_\R,U). $$
This induces a natural embedding $L^2(\Omega_\Ber,\mu)\subset \cF_\anti$. We denote by $e\colon \cF_\anti \to L^2(\Omega_\Ber,\mu)$ the orthogonal projection, which is the Jones projection for $E$. Then $J_\Ber:=J e$ is the modular conjugation for $L^\infty(\Omega_\Ber,\mu)$.

\begin{lem}
	In this setting $L^2(\Omega_\Ber,\mu)$ is generated by $\Omega$ and
	$$ x_1\wedge Ix_1 \wedge x_2\wedge Ix_2 \wedge \cdots \wedge x_n\wedge Ix_n,\quad \text{for all }n\in \N \text{ and distinct } x_1,\ldots,x_n\in X .$$
\end{lem}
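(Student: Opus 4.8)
The plan is to pin down $L^2(\Omega_\Ber,\mu)\subset\cF_\anti$ by comparing two explicit spanning families. Recall from Notation \ref{notation} that $L^\infty(\Omega_\Ber,\mu)=\mathrm{W}^*\{c_x^*c_x\mid x\in X_0\}$ and that $L^2(\Omega_\Ber,\mu)$ is the $\cF_\anti$-closure of $L^\infty(\Omega_\Ber,\mu)\Omega$. The operators $\{c_x^*c_x\}_{x\in X_0}$ are mutually commuting projections (they are the diagonal matrix units $e^x_{11}$ in the UHF picture of Subsection \ref{Quasi-free states on CAR algebras}), so the $*$-algebra they generate is exactly the linear span of the finite products $\prod_{x\in F}c_x^*c_x$ over finite $F\subset X_0$. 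Hence
$$L^2(\Omega_\Ber,\mu)=\overline{\mathrm{span}}\left\{\Big(\prod_{x\in F}c_x^*c_x\Big)\Omega \;\middle|\; F\subset X_0\ \text{finite}\right\}.$$
So it suffices to show that, for each finite $F$, the span of these vectors coincides with the span of the wedges in the statement.

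The key computational input is the one-step formula already established in the proof of Lemma \ref{lem-forkeylemma3}: for distinct $y_1,\dots,y_m\in X$ with $x,Ix\notin\{y_1,\dots,y_m\}$ and $\xi=y_1\wedge\cdots\wedge y_m$, one has $c_x^*c_x\,\xi=\tfrac12\big(d(x)^2\xi-r_{m+1}r_{m+2}\,\widehat{\delta}_x\wedge\widehat{\delta}_{Ix}\wedge\xi\big)$, where $r_k=\sqrt{k}$ and $\widehat{\delta}_x=d(x)\delta_x$. Since the $c_x^*c_x$ commute, I would apply them to $\Omega$ one coordinate at a time; at each stage $x,Ix$ are absent from the accumulated wedge, so the formula applies repeatedly. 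An induction on $|F|$ then shows that $(\prod_{x\in F}c_x^*c_x)\Omega$ is a linear combination of the wedges $\widehat{\delta}_{x_{i_1}}\wedge\widehat{\delta}_{Ix_{i_1}}\wedge\cdots\wedge\widehat{\delta}_{x_{i_k}}\wedge\widehat{\delta}_{Ix_{i_k}}$ over subsets $\{x_{i_1},\dots,x_{i_k}\}\subseteq F$. As $\widehat{\delta}_x\wedge\widehat{\delta}_{Ix}=d(x)d(Ix)\,\delta_x\wedge\delta_{Ix}$ with $d(x),d(Ix)>0$, these span the same space as the vectors $x_{i_1}\wedge Ix_{i_1}\wedge\cdots\wedge x_{i_k}\wedge Ix_{i_k}$, which gives the inclusion of $L^2(\Omega_\Ber,\mu)$ into the closed span $K$ of the stated vectors.

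For the reverse inclusion I would fix $F=\{x_1,\dots,x_n\}\subset X_0$ and read the above expansion as a triangular system with respect to inclusion of subsets: writing $e_S:=\bigwedge_{i\in S}(x_i\wedge Ix_i)$, one gets $(\prod_{i\in T}c_{x_i}^*c_{x_i})\Omega=\sum_{S\subseteq T}\lambda_{T,S}\,e_S$ for each $T\subseteq\{1,\dots,n\}$, where the diagonal coefficient $\lambda_{T,T}$ is a nonzero product of factors $-\tfrac12 d(x_i)d(Ix_i)r_{m+1}r_{m+2}$. Because the $e_S$ are orthogonal and nonzero (the $x_i,Ix_i$ are $2|S|$ distinct basis vectors), this invertible triangular system expresses each $e_S$ as a linear combination of the vectors $(\prod_{i\in T}c_{x_i}^*c_{x_i})\Omega\in L^2(\Omega_\Ber,\mu)$, giving $K\subseteq L^2(\Omega_\Ber,\mu)$. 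Finally, a general generator indexed by distinct $x_i\in X$ equals, up to a sign and a reordering, one indexed by the $X_0$-representatives of the pairs $\{x_i,Ix_i\}$ (which must be pairwise disjoint, else the wedge vanishes), so nothing is lost by taking $x_i\in X_0$. The only subtlety here is the bookkeeping of the triangularity; given the explicit one-step formula from Lemma \ref{lem-forkeylemma3}, there is no serious obstacle and the argument is essentially formal.
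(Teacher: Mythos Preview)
Your argument is correct and is precisely the kind of ``direct computation'' the paper's one-line proof alludes to: both proceed by noting that $L^2(\Omega_\Ber,\mu)$ is the closure of the span of $\bigl(\prod_{x\in F}c_x^*c_x\bigr)\Omega$ and then computing these vectors explicitly. Your use of the one-step formula from the proof of Lemma~\ref{lem-forkeylemma3} together with the triangular inversion is a clean way to make this explicit, and the final remark reducing $x_i\in X$ to $x_i\in X_0$ is the right bookkeeping.
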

\begin{proof}
	Since $L^2(\Omega_\Ber,\mu)$ coincides with the closure of $\mathrm{W}^*\{ c_x^*c_x \mid x\in X \}\Omega$, the lemma follows from direct computations.
\end{proof}

Recall $Z_n=\{[x_1,\ldots,x_n]\in X^n/\frakS_n\mid x_i\neq x_j\ \text{for all }i\neq j\}$. Define $W_n$ as the set of all $[x_1,Ix_1,x_2,Ix_2,\ldots,x_{n},Ix_n]$ which defines an element in $Z_{2n}$. Set
	$$ \sfW:=\{\star\}\sqcup \bigsqcup_{n\geq 1}W_n \subset \sfZ$$
and observe that $\ell^\infty(\sfW) = \ell^\infty(\sfZ)e$. We have a natural inclusion 
	$$\ell^\infty(\sfW)= \ell^\infty(\sfZ)e \subset e\B(\cF_\anti)e = \B(L^2(\Omega_\Ber,\mu)).$$
In this way, we have an embedding $\ell^\infty(\sfW)\subset  \B(L^2(\Omega_\Ber,\mu))$. Set
\begin{align*}
	\cC_{\ell,{\rm Ber}}:=\mathrm{C}^*\{ \ell(x)\ell(Ix)e \mid x\in X\} \subset \B(L^2(\Omega_\Ber,\mu))
\end{align*}
and $\cC_{r,\Ber}:=J_\Ber \cC_{\ell,\Ber}J_\Ber$.

\begin{lem}
	It follows that $C(\Omega_\Ber)\subset \cC_{\ell,\Ber}$.
\end{lem}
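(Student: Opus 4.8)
The plan is to show that for each $x\in X_0$ the multiplication operators $c_xc_x^*$ and $c_x^*c_x$, regarded as operators on $L^2(\Omega_\Ber,\mu)$, already lie in $\cC_{\ell,\Ber}$. Since $C(\Omega_\Ber)$ is exactly the C$^*$-algebra generated by the family $\{c_xc_x^*,\,c_x^*c_x\mid x\in X_0\}$ acting by multiplication, this yields the inclusion. Write $a_x:=\ell(x)\ell(Ix)e\in\cC_{\ell,\Ber}$. For $x\in X_0$ one has $\ell(Ix)\ell(x)e=-a_x$, so the generators indexed by $IX_0$ produce nothing new and I may work with $\{a_x\mid x\in X_0\}$. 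Throughout I use the wedge description of $L^2(\Omega_\Ber,\mu)$ from the preceding lemma: it is spanned by $\Omega$ and the vectors $\xi_S:=\bigwedge_{x\in S}(x\wedge Ix)$ for finite $S\subset X_0$.

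First I would pin down the partial-isometry structure of $a_x$. A direct computation on the wedge basis gives $a_x\xi_S=0$ when $x\in S$, and $a_x\xi_S=\sqrt{(2n+1)(2n+2)}\,(x\wedge Ix\wedge\xi_S)$ when $x\notin S$ and $|S|=n$. Using that a wedge of $k$ distinct orthonormal vectors has squared norm $1/k!$, the scalar $\sqrt{(2n+1)(2n+2)}$ exactly compensates the ratio of norms of $\xi_S$ and $x\wedge Ix\wedge\xi_S$, so $a_x$ is isometric on the closed span $K_x^{\mathrm{abs}}$ of the $\xi_S$ with $x\notin S$ and vanishes on its complement $K_x^{\mathrm{pres}}$. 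Hence $a_x$ is a partial isometry with $a_x^*a_x=P_x^{\mathrm{abs}}$ and $a_xa_x^*=P_x^{\mathrm{pres}}$, where $P_x^{\mathrm{abs}},P_x^{\mathrm{pres}}$ denote the orthogonal projections onto $K_x^{\mathrm{abs}},K_x^{\mathrm{pres}}$. In particular $P_x^{\mathrm{abs}}+P_x^{\mathrm{pres}}=\mathrm{id}_{L^2}$, so $\cC_{\ell,\Ber}$ is unital and both projections lie in $\mathrm{C}^*\{a_x\}$.

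Next I would compute $ec_xc_x^*e$ on the same basis. Writing $c_x=\tfrac{1}{\sqrt2}(d(x)\ell(x)+d(Ix)\ell(Ix)^*)$ with $d(x)=\tfrac{\sqrt2}{\sqrt{1+\mu_x^{-1}}}$, $d(Ix)=\tfrac{\sqrt2}{\sqrt{1+\mu_x}}$, and applying the creation/annihilation formulas (as in the proof of Lemma \ref{lem-forkeylemma3}), one checks that the $|S|$-dependent normalizations again cancel and that
\[
	e c_x c_x^* e=\tfrac12 d(x)^2\,P_x^{\mathrm{pres}}+\tfrac12 d(Ix)^2\,P_x^{\mathrm{abs}}+\tfrac12 d(x)d(Ix)\,(a_x+a_x^*).
\]
Because $c_xc_x^*\in L^\infty(\Omega_\Ber,\mu)$ preserves $L^2(\Omega_\Ber,\mu)$, the operator $ec_xc_x^*e$ is precisely the multiplication operator by $c_xc_x^*$, and the right-hand side is a finite element of $\mathrm{C}^*\{a_x\}\subset\cC_{\ell,\Ber}$. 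Then $c_x^*c_x=\mathrm{id}-c_xc_x^*$ also lies in $\cC_{\ell,\Ber}$, using $\mathrm{id}_{L^2}=a_xa_x^*+a_x^*a_x$. Generating over all $x\in X_0$ gives $C(\Omega_\Ber)\subset\cC_{\ell,\Ber}$.

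The main obstacle is purely computational: verifying the two identities, namely that the $n=|S|$-dependent factors $\sqrt{(2n+1)(2n+2)}$ cancel against the wedge-norm ratios so that (a) $a_x$ is a genuine partial isometry with the stated source and range projections, and (b) $ec_xc_x^*e$ collapses to the finite combination displayed above. Conceptually this reflects the fact that, under the tensor factorization $L^2(\Omega_\Ber,\mu)\cong\bigotimes_{x\in X_0}\bigl(\C\Omega_x\oplus\C(x\wedge Ix)\bigr)$, the operator $a_x$ acts as a raising partial isometry in the $x$-th factor tensored with the identity, whence $a_x,a_x^*,a_xa_x^*,a_x^*a_x$ generate the full factor $\M_2$ and in particular the multiplication operators $c_xc_x^*,c_x^*c_x$. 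The direct computation above is preferable because it avoids having to set up this factorization rigorously.
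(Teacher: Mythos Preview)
Your proposal is correct and follows essentially the same route as the paper: both expand $c_xc_x^*$ into the four terms $d(x)^2\ell(x)\ell(x)^*$, $d(x)d(Ix)\ell(x)\ell(Ix)$, $d(Ix)d(x)\ell(Ix)^*\ell(x)^*$, $d(Ix)^2\ell(Ix)^*\ell(Ix)$ and identify each term, after compression by $e$, with an element of $\mathrm{C}^*\{a_x\}$. The paper does this more tersely, via the single observation that on $L^2(\Omega_\Ber)$ a wedge contains $x$ iff it contains $Ix$, whence $\ell(x)\ell(x)^*e=\ell(Ix)\ell(x)e\,(\ell(Ix)\ell(x)e)^*=a_xa_x^*$; your version unpacks this by first verifying that the $\sqrt{(2n+1)(2n+2)}$ factors cancel the wedge-norm ratios, so that $a_x$ is a partial isometry with $a_x^*a_x=P_x^{\mathrm{abs}}$ and $a_xa_x^*=P_x^{\mathrm{pres}}$, and then writing the explicit formula $ec_xc_x^*e=\tfrac12 d(x)^2 a_xa_x^*+\tfrac12 d(Ix)^2 a_x^*a_x+\tfrac12 d(x)d(Ix)(a_x+a_x^*)$.
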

\begin{proof}
For $x\in X$, $2c_xc_x^*$ coincides with
\begin{align*}
	 d(x)d(Ix)\ell(x)\ell(Ix) + d(x)^2\ell(x)\ell(x)^*
+ d(Ix)^2\ell(Ix)^*\ell(Ix) 
+ d(Ix)d(x)\ell(Ix)^*\ell(x)^*,
\end{align*}
where $d(y)>0$ for $y\in X$ is given by $\widehat{\delta}_y=d(y)\delta_y$. 
To see $c_xc_x^*e \in \cC_{\ell,\Ber}$, we have only to show $\ell(x)\ell(x)^*e,\ell(Ix)^*\ell(Ix)e \in \cC_{\ell, \Ber}$. Observe that $\ell(x)\ell(x)^*e$ is the projection onto the space generated by words containing both of $x$ and $Ix$. It holds that
	$$\ell(x)\ell(x)^* e = \ell(Ix)\ell(Ix)^*\ell(x)\ell(x)^* e = \ell(Ix)\ell(x)e (\ell(Ix)\ell(x) e)^* \in \cC_{\ell,\Ber}.$$
Since $x\in X$ is arbitrary, this finishes the proof.
\end{proof}

Set
	$$ C(\overline{\sfW}) := \{ f\in \ell^\infty(\sfW)\mid [f,a]\in \K(L^2(\Omega_\Ber,\mu)),\ \text{for all }a\in \cC_{\ell,\Ber}\cup \cC_{r,\Ber} \}.$$
and define a boundary C$^*$-algebra by $C(\partial \sfW):=C(\overline{\sfW})/c_0(\sfW)$. It is easy to see that $ G  \act X$ induces an action $ G  \act C(\partial \sfW)$, as in Lemma \ref{lem-action-boundary2}. The next proposition shows that the boundary amenability holds for $C(\partial \sfW)$.

\begin{prop}
	If $ G $ is exact and $ G  \act X_0$ has finite stabilizers and finitely many orbits, then the action $ G  \act C(\partial \sfW)$ is amenable.
\end{prop}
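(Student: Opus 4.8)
The plan is to mimic the proof of Theorem~\ref{thm-boundary-amenability}: it suffices to produce a $G$-equivariant unital completely positive map from the amenable $G$-C$^*$-algebra $\ell^\infty(X)$ into $C(\partial\sfW)$, and then conclude exactly as in the last paragraph of that proof. Since $C(\partial\sfW)$ is commutative (hence equal to its own center) and $G\act\ell^\infty(X)$ is amenable (as $G$ is exact and $G\act X_0$ has finite stabilizers and finitely many orbits), such a map forces amenability of $G\act C(\partial\sfW)$. Rather than rebuild the map from scratch, I would obtain it by \emph{compressing} the $G$-equivariant ucp map $Q\op\mu^*\colon\ell^\infty(X)\to C(\partial\sfZ)$ already constructed for the larger boundary.

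The compression is built from the projection $e\colon\cF_\anti\to L^2(\Omega_\Ber,\mu)$. Under the embedding $\iota$ of Proposition~\ref{prop-embedding} one has $e=\iota(1_\sfW)\in\iota(\ell^\infty(\sfZ))$, so $f\mapsto fe$ is nothing but the restriction $*$-homomorphism $\ell^\infty(\sfZ)\to\ell^\infty(\sfW)=\ell^\infty(\sfZ)e$. The heart of the argument is the claim
\[
	f\in C(\overline{\sfZ})\quad\Longrightarrow\quad fe\in C(\overline{\sfW}).
\]
For the generators $\ell(x)\ell(Ix)e$ of $\cC_{\ell,\Ber}$ this is routine: $e\in\iota(\ell^\infty(\sfZ))$ commutes with $f$, and $\ell(x)\ell(Ix)e$ maps into $L^2(\Omega_\Ber,\mu)$, so a direct computation gives $[fe,\ell(x)\ell(Ix)e]=e[f,\ell(x)\ell(Ix)]e$, which is compact because $\ell(x)\ell(Ix)\in\cC_{\ell,r}$ and $f\in C(\overline{\sfZ})$.

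The main technical point is the right-hand generators $\cC_{r,\Ber}$. Here I would first check that the ambient conjugation $J$ commutes with $e$: since $J(\xi_1\wedge\cdots\wedge\xi_n)=I\xi_n\wedge\cdots\wedge I\xi_1$, it sends a paired vector $x_1\wedge Ix_1\wedge\cdots\wedge x_n\wedge Ix_n$ to another paired vector, so $J$ preserves $L^2(\Omega_\Ber,\mu)$ and $Je=eJ$, whence $J_\Ber=Je$ satisfies $J_\Ber eJ_\Ber=e$. Using $J\ell(\xi)J=r(I\xi)$ this yields $J_\Ber\,\ell(x)\ell(Ix)e\,J_\Ber=e\,r(Ix)r(x)\,e$, and the same compression computation gives $[fe,e\,r(Ix)r(x)e]=e[f,r(Ix)r(x)]e\in\K(L^2(\Omega_\Ber,\mu))$ since $r(Ix)r(x)\in\cC_{\ell,r}$. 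This proves the claim.

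Finally, the restriction map $f\mapsto fe$ clearly sends $c_0(\sfZ)$ into $c_0(\sfW)$ and is $G$-equivariant, because $\sfW$ is $G$-invariant and hence $g\cdot e=e$; therefore it descends to a $G$-equivariant $*$-homomorphism $C(\partial\sfZ)\to C(\partial\sfW)$. Composing it with $Q\op\mu^*$ produces the desired $G$-equivariant ucp map $\ell^\infty(X)\to C(\partial\sfW)$, and the amenability of $G\act C(\partial\sfW)$ follows as in Theorem~\ref{thm-boundary-amenability}. The only genuinely delicate step is the $J_\Ber$-bookkeeping for $\cC_{r,\Ber}$ together with the verification that $Je=eJ$; everything else is a transcription of the $\sfZ$-case. (One could equally bypass $C(\partial\sfZ)$ and work directly with the restriction $\mu^*(\,\cdot\,)e$ to $\sfW$, using Lemma~\ref{lem-boundary-amenability} in place of the descended map, but the compression route reuses the existing construction most cleanly.)
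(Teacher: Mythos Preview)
Your argument is correct and yields exactly the same ucp map as the paper's proof, just organized differently. The paper proceeds by the route you mention in your final parenthetical: it restricts $\mu\colon\sfZ\to\Prob(X)$ to $\sfW$ and observes that the estimates of Lemma~\ref{lem-boundary-amenability} transfer verbatim to give $\mu^*(g\cdot\varphi)-g\cdot\mu^*(\varphi)\in c_0(\sfW)$ and $[\mu^*(\varphi),\ell(x)\ell(Ix)e],\,[\mu^*(\varphi),r(x)r(Ix)e]\in\K(L^2(\Omega_\Ber,\mu))$, without ever invoking $J_\Ber$ or building a map $C(\partial\sfZ)\to C(\partial\sfW)$. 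Your compression route produces the identical map $\varphi\mapsto\mu^*(\varphi)e$ (restriction to $\sfW$), but packages the verification as a general lemma ``$f\in C(\overline\sfZ)\Rightarrow fe\in C(\overline\sfW)$'' together with the check $Je=eJ$ to handle $\cC_{r,\Ber}$. Both are fine; the paper's direct approach is a little shorter because it works with $r(x)r(Ix)e$ immediately rather than going through the $J_\Ber$-conjugation, while yours has the minor advantage of exhibiting an explicit $G$-equivariant $\ast$-homomorphism between the two boundaries.
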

\begin{proof}
	Let $\mu\colon \sfZ \to \mathrm{Prob}(X)$ be as in Subsection \ref{Definition of boundary}. Consider its restriction to $\sfW$ and induce $\mu^*\colon \ell^\infty(X)\to \ell^\infty(\sfW)$. Then by restricting $\sfW\subset \sfZ$, the proof of Lemma \ref{lem-boundary-amenability} shows that 
\begin{align*}
	\mu^* (g \cdot \varphi ) - g\cdot \mu^*( \varphi ) \in c_0(\sfW),\quad [\mu^*(\varphi),\ell(x)\ell(Ix)e],\ [\mu^*(\varphi),r(x)r(Ix)e]\in \K(L^2(\Omega_\Ber,\mu)).
\end{align*}
for all $g \in  G $, $\varphi \in \ell^\infty (X)$ and $x\in X$. This gives the conclusion.
\end{proof}

Using this proposition, we can directly deduce condition (AO) in Proposition \ref{propE}. To see this, we again have to prove Lemma \ref{key lemma} for $U_g^\alpha e$, but it is straightforward since we have already obtained a concrete decomposition of $U_g^\alpha$ in the last part of the proof of Lemma \ref{key lemma}. It is enough to add the projection $e$ to the decomposition.

\appendix 
\section{Solidity for actions on von Neumann algebras}
\label{Solidity for actions on von Neumann algebras}

Throughout this appendix, we fix a free ultrafilter $\omega$ on $\N$. We use \textit{Ocneanu's ultraproduct von Neumann algebras} $M^\omega$ in this section and we refer the reader to \cite{AH12}.

\subsection{Definition and characterization}
\label{Definition and characterization}

Recall that a diffuse von Neumann algebra $M$ with separable predual is \textit{solid} if for any diffuse von Neumann subalgebra $A\subset M$ with expectation, the relative commutant $A'\cap M$ is amenable \cite{Oz03}. Any non-amenable solid factor is \textit{full} (i.e.\ $M'\cap M^\omega =\C$) \cite[Proposition 7]{Oz03} \cite[Theorem 3.1]{HU15}. In particular, any non-amenable subfactor (with expectation) of a solid von Neumann algebra is full.

Let $B$ be a diffuse von Neumann algebra with separable predual, $ G $ a countable discrete group, $ G  \act^\alpha B$ an action. Assume $\alpha$ is \textit{free}, that is, $B' \cap M\subset B$ where $M:=B\rtimes_\alpha  G $. 
We say that $\alpha$ is \textit{solid} if for any diffuse von Neumann subalgebra $A\subset B$ with expectation, the relative commutant $A'\cap M$ is amenable. The goal of this appendix is to prove the following theorem. This is a generalization of \cite[Proposition 6]{CI08}.

\begin{thm}\label{solid-thm}
	Let $B, G ,\alpha,M$ be as above.
\begin{enumerate}
	\item If $\alpha$ is solid, then for any intermediate von Neumann algebra $B\subset N\subset M$ with expectation, there exist mutually orthogonal projections $\{z_n\}_{n\geq 0}$ in $\mathcal Z(N)\subset \mathcal Z(B)$ such that $\sum_n z_n=1$, $Nz_0$ is amenable, and $(N'\cap B^\omega)z_n=\C z_n$ for all $n\geq 1$.

	\item Assume that $B$ is commutative and write $B=L^\infty(X)$ and $L(\mathcal R) = M$, where $\mathcal R$ is the equivalence relation arising from $\alpha$. Then $\alpha$ is solid if and only if for any subequivalence relation $\mathcal S \subset \mathcal R$, there exists a partition $\{X_n\}_{n\geq 0}$ of $X$ into $\mathcal S$ invariant measurable sets such that 
\begin{itemize}
	\item[$\rm(a)$] $\mathcal S|_{X_0}$ is hyperfinite; and 
	\item[$\rm(b)$] $\mathcal S|_{X_n}$ is strongly ergodic for all $n\geq 1$.
\end{itemize}
\end{enumerate}
\end{thm}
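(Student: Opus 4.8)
The plan is to prove part (1) as the substantive statement and then read off part (2) through the Cartan / subequivalence-relation dictionary. Throughout I write $Q:=N'\cap B^\omega$ and first record the ambient structure. Since $\alpha$ is free, $B'\cap M\subseteq B$, and because $B\subseteq N$ this forces $N'\cap M=\mathcal{Z}(N)\subseteq\mathcal{Z}(B)$; moreover $\mathcal{Z}(N)$, lying in $\mathcal{Z}(B)$, commutes with all of $B^\omega$, so $\mathcal{Z}(N)\subseteq\mathcal{Z}(Q)$. These inclusions are what make the statement $(N'\cap B^\omega)z_n=\mathbb{C}z_n$ force $z_n$ to be an atom of $\mathcal{Z}(N)$.

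For part (1) I would first let $z_0$ be the largest projection in $\mathcal{Z}(N)$ with $Nz_0$ amenable; this exists since injectivity is stable under increasing limits and the predual is separable. The next step is to show $\mathcal{Z}(N)(1-z_0)$ is atomic. Suppose some $\mathcal{Z}(N)z$ with $0\neq z\le 1-z_0$ were diffuse. Using that the center of $B$ is fixed by the modular flow (so carries an expectation) and that a diffuse von Neumann algebra contains a diffuse abelian subalgebra with expectation, I would assemble a \emph{unital} diffuse abelian $A\subseteq B$ with expectation of the form $\mathcal{Z}(N)z\oplus A_1$ with $A_1\subseteq B(1-z)$; since $z\in\mathcal{Z}(B)$, both summands commute with $Nz$, so $Nz\subseteq A'\cap M$. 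Solidity makes $A'\cap M$ amenable, hence its corner $Nz$ is amenable, contradicting maximality of $z_0$. Thus $1-z_0=\sum_{n\ge1}z_n$ with $z_n$ atoms of $\mathcal{Z}(N)$, and each $Nz_n$ is a non-amenable factor.

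It remains to prove the spectral-gap statement $Qz_n=\mathbb{C}z_n$ for each atom, and this is the main obstacle. Since $z_n$ is an atom we have $(N'\cap M)z_n=\mathcal{Z}(N)z_n=\mathbb{C}z_n$, and $Qz_n\subseteq (N'\cap M^\omega)z_n$, so it suffices to show the non-amenable factor $Nz_n$ has spectral gap in $M^\omega$, i.e. $(N'\cap M^\omega)z_n=(N'\cap M)z_n$. The difficulty is that solidity speaks only about genuine subalgebras of $B$, whereas a hypothetical nontrivial relatively central sequence lives in the Ocneanu ultrapower and cannot be replaced by a fixed index. I would resolve this by transferring solidity to the ultrapower: show that $\tilde{A}'\cap M^\omega$ is amenable for diffuse abelian $\tilde{A}\subseteq B^\omega$ with expectation, so that if $Qz_n\neq\mathbb{C}z_n$ one extracts such an $\tilde{A}$ inside $(N'\cap B^\omega)z_n$ (note $\tilde A\subseteq N'$ so it commutes with $Nz_n$), whence $Nz_n$ sits in an amenable relative commutant and is itself amenable, a contradiction. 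Because we are in the nonsingular (type~III) setting there is no trace, so this descent and the attendant spectral-gap estimates must be carried out with faithful normal states and modular theory in Ocneanu's framework \cite{AH12}, paralleling the fullness results for non-amenable solid algebras \cite{Oz03,HU15}; this is where the real work lies.

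Finally, part (2) follows by translation. For the forward implication I would apply part (1) to $N=L(\mathcal{S})$: the projections $z_n\in\mathcal{Z}(N)\subseteq L^\infty(X)$ are indicators $1_{X_n}$ of $\mathcal{S}$-invariant sets, amenability of $Nz_0$ is hyperfiniteness of $\mathcal{S}|_{X_0}$ by Connes--Feldman--Weiss, and $(N'\cap B^\omega)1_{X_n}=\mathbb{C}1_{X_n}$ is precisely strong ergodicity of the (necessarily ergodic) piece $\mathcal{S}|_{X_n}$. For the converse, given diffuse $A\subseteq L^\infty(X)$ with expectation, the relative commutant $A'\cap M$ contains $B$ and is therefore $L(\mathcal{S}_A)$ for a subequivalence relation $\mathcal{S}_A$, with $A\subseteq\mathcal{Z}(L(\mathcal{S}_A))$ diffuse; the assumed partition then has no strongly ergodic pieces, since an ergodic piece would be an atom of the (diffuse) center, so $\mathcal{S}_A$ is hyperfinite and $A'\cap M$ is amenable, i.e. $\alpha$ is solid. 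This mirrors and generalizes \cite[Proposition 6]{CI08}.
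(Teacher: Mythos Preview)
Your step 2 is fine and close to the paper's argument, but step 3 has a real gap. You correctly identify the difficulty: solidity gives information about commutants of diffuse $A\subset B$, while the obstruction lives in $B^\omega$. Your proposed resolution---``transfer solidity to the ultrapower,'' i.e.\ show $\tilde A'\cap M^\omega$ is amenable for diffuse $\tilde A\subset B^\omega$---is a genuinely different and stronger statement than what is assumed, and you give no mechanism for proving it. Ultrapowers of solid algebras need not be solid, and there is no obvious way to pull a diffuse $\tilde A\subset B^\omega$ back to a single diffuse subalgebra of $B$ with the same commutant.

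The paper bypasses this entirely via Popa's lemma (Lemma~\ref{Popa's lemma} / Proposition~\ref{conclusion-prop}): if $(N'\cap B^\omega)z^\perp$ is diffuse, then there is a decreasing sequence of \emph{honest} diffuse abelian subalgebras $A_n\subset Bz^\perp$ (with expectation) such that $Nz^\perp=\bigvee_n(A_n'\cap Nz^\perp)$. Now solidity applies directly to each $A_n$, so each $A_n'\cap z^\perp Mz^\perp$ is amenable, and an increasing union of amenable subalgebras is amenable; hence $Nz^\perp$ is amenable, contradicting the choice of $z_0$. Thus $N'\cap B^\omega$ is already discrete on the non-amenable part, and Ioana's lemma (Lemma~\ref{Ioana's lemma}) identifies it with $(N'\cap B)(1-z_0)=\mathcal Z(N)(1-z_0)$, so its minimal projections $z_n$ satisfy $(N'\cap B^\omega)z_n=\mathbb C z_n$ directly. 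This makes your separate step 2 unnecessary and replaces your ultrapower-solidity lemma with a descent argument that stays inside $B$. Your treatment of part (2) is in line with the paper's reference to \cite{CI08}.
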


\subsection{Proof of Theorem \ref{solid-thm}}

We keep the following setting. Let $B\subset^{E_B} N\subset^{E_N} M$ be inclusions of von Neumann algebras with separable predual and with expectation. We write $E_B\circ E_N$ again as $E_B$ and take a faithful normal state $\varphi\in M_*$ such that $\varphi = \varphi \circ E_B = \varphi \circ E_N$. We have inclusions with expectation
	$$N'\cap (B^\omega)_{\varphi^\omega} \subset  N'\cap B^\omega \subset M^\omega.$$
To prove Theorem \ref{solid-thm}, we have to recall the following three lemmas. We introduce slightly more general ones, but the same proofs work by using the fact that $B$ is contained in $N$.

The first one is by Ando and Haagerup \cite[Theorem 5.2]{AH12} (see also \cite[Lemma 2.5]{HR14}).

\begin{lem}\label{AH's lemma}
	If $N'\cap (B^\omega)_{\varphi^\omega}=\C$, then $N'\cap B^\omega =\C$.
\end{lem}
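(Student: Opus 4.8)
The plan is to realize $N'\cap B^\omega$ as a globally modular-invariant subalgebra of $B^\omega$ whose intersection with the centralizer is, by hypothesis, trivial, and then to feed it into the Ando--Haagerup analysis of the modular flow on an Ocneanu ultrapower to conclude that the subalgebra itself is trivial. The inclusions $B\subset N\subset M$ with $\varphi$-preserving expectations are exactly what make this transfer possible.

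First I would fix the modular data. Since $B\subset N\subset M$ both carry $\varphi$-preserving conditional expectations, Takesaki's theorem shows that $B$ and $N$ are globally invariant under $\sigma^\varphi$ and that $\sigma^\varphi_t|_B=\sigma^{\varphi|_B}_t$. Lifting $E_B$ to a $\varphi^\omega$-preserving expectation $E_B^\omega\colon M^\omega\to B^\omega$, the same reasoning gives that $B^\omega$ is globally $\sigma^{\varphi^\omega}$-invariant and that $\sigma^{\varphi^\omega}_t|_{B^\omega}$ is precisely the modular flow of $\varphi^\omega|_{B^\omega}$. In particular the centralizer $(B^\omega)_{\varphi^\omega}$ coincides with the fixed-point algebra $\{x\in B^\omega : \sigma^{\varphi^\omega}_t(x)=x \text{ for all } t\}$.

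Next I would check that $P:=N'\cap B^\omega$ is itself globally $\sigma^{\varphi^\omega}$-invariant. Indeed, for $x\in P$ and $y\in N$ regarded as a constant sequence, global invariance of $N$ gives $\sigma^\varphi_{-t}(y)\in N$, so $x$ commutes with $\sigma^{\varphi^\omega}_{-t}(y)=\sigma^\varphi_{-t}(y)$; then $\sigma^{\varphi^\omega}_t(x)\,y=\sigma^{\varphi^\omega}_t\bigl(x\,\sigma^\varphi_{-t}(y)\bigr)=\sigma^{\varphi^\omega}_t\bigl(\sigma^\varphi_{-t}(y)\,x\bigr)=y\,\sigma^{\varphi^\omega}_t(x)$, so $\sigma^{\varphi^\omega}_t(x)\in P$. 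Combining this with the previous paragraph, the hypothesis $N'\cap (B^\omega)_{\varphi^\omega}=\C$ says exactly that the fixed-point algebra of the flow $\sigma^{\varphi^\omega}|_P$ is $\C$. The whole statement is thereby reduced to: a globally $\sigma^{\varphi^\omega}$-invariant von Neumann subalgebra $P\subset B^\omega$ whose centralizer part is $\C$ must equal $\C$. Here the containment $B\subset N$ is what is used, in that it keeps us inside the invariant ultrapower $B^\omega$, on which $\sigma^{\varphi^\omega}$ genuinely is a modular flow, so that the argument of Ando--Haagerup applies verbatim with $M^\omega$ replaced by $B^\omega$.

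The hard part is precisely this last reduction, and it cannot be done by a soft dynamical argument: a generic flow on a von Neumann algebra may have trivial fixed-point algebra while the algebra is large, so the special structure of the ultrapower is essential. The engine is the Ando--Haagerup control of the modular flow $t\mapsto \sigma^{\varphi^\omega}_t(x)$ on $M^\omega$ \cite[Theorem 5.2]{AH12} (see also \cite[Lemma 2.5]{HR14}), which permits, along the ultrafilter $\omega$, a reparametrisation of time that pushes any element of a modular-invariant subalgebra into the centralizer without disturbing its commutation relations. Applying this to $P$, every self-adjoint $x\in P$ is approximated in $\|\cdot\|^\sharp_{\varphi^\omega}$ by elements of $P\cap (B^\omega)_{\varphi^\omega}=\C$, forcing $x\in\C$ and hence $P=\C$, which is the assertion. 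I expect the bookkeeping in this final step, namely verifying that the Ando--Haagerup approximation respects commutation with $N$ and lands inside $B^\omega$, to be the only genuinely delicate point.
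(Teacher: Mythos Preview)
Your proposal is correct and follows exactly the route the paper indicates: the paper does not give a self-contained proof but simply cites \cite[Theorem 5.2]{AH12} and \cite[Lemma 2.5]{HR14}, remarking that the same proofs go through in this slightly more general setting ``by using the fact that $B$ is contained in $N$.'' You have correctly isolated this point---the inclusion $B\subset N$ is precisely what ensures $P=N'\cap B^\omega$ sits inside $B^\omega$ as a $\sigma^{\varphi^\omega}$-invariant subalgebra, so that the Ando--Haagerup machinery applies verbatim---and your outline of the reduction (modular invariance of $N$ and of $B^\omega$, hence of $P$) is accurate.
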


We next introduce Ioana's lemma \cite[Lemma 2.7]{Io12}. For the proof, see its generalization \cite[Theorem 2.3]{HR14} by Houdayer and Raum and use the above Lemma \ref{AH's lemma}.

\begin{lem}\label{Ioana's lemma}
	If there is a projection $z\in \mathcal{Z}(N'\cap B^\omega)$ such that $(N'\cap B^\omega) e$ is discrete, then $e\in \mathcal{Z}(N'\cap B)$ and $(N'\cap B^\omega) e=(N'\cap B)e$. 
\end{lem}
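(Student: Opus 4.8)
The plan is to reduce the entire statement to a single \emph{descent} assertion: that the corner $P := (N'\cap B^\omega)e$, whose elements are a priori represented by bounded sequences in $B$, is already contained in $B$ via constant sequences. Granting $P \subseteq B$, everything else is bookkeeping. Indeed each $x \in P$ then commutes with $N$ (as $P \subseteq N'\cap B^\omega$) and lies in $B$, so $x \in N'\cap B$; since $e$ is the unit of $P$ we get $e \in N'\cap B$ and $P = Pe \subseteq (N'\cap B)e$, while conversely $(N'\cap B)e \subseteq (N'\cap B^\omega)e = P$, giving the desired equality $(N'\cap B^\omega)e = (N'\cap B)e$. Finally $e \in \mathcal Z(N'\cap B^\omega)$ commutes with $N'\cap B \subseteq N'\cap B^\omega$, so together with $e \in N'\cap B$ this yields $e \in \mathcal Z(N'\cap B)$.

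To set up the descent I would first use the structure theory of type I (\emph{discrete}) von Neumann algebras. Since $P$ is discrete it carries an abelian projection $q$ of central support $e$, and decomposing $\mathcal Z(P)$ into homogeneous type $\mathrm{I}_k$ summands one obtains, on each summand, a generating system of matrix units / partial isometries whose diagonal $q$ is abelian with full central support. Consequently $P$ is generated as a von Neumann algebra by its center $\mathcal Z(P) = qPq$ together with these partial isometries. It therefore suffices to show that $\mathcal Z(P)$ and the implementing partial isometries can each be realized inside $B$ rather than merely in $B^\omega$.

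The descent of these pieces is the technical heart, and here I would invoke the asymptotic-structure / spectral-gap results of Houdayer--Raum \cite[Theorem 2.3]{HR14} together with Lemma \ref{AH's lemma}. The mechanism is to control the relative commutant of $N$ inside $B^\omega$ after compressing by the abelian projection $q \in N'\cap B^\omega$: one transfers the problem to the centralizer ultraproduct $(B^\omega)_{\varphi^\omega}$, where a tracial (Pimsner--Popa type) argument in the spirit of Ioana pins the representing sequences down, and then uses Lemma \ref{AH's lemma} to upgrade the resulting rigidity from the centralizer relative commutant $N'\cap (B^\omega)_{\varphi^\omega}$ back to the full relative commutant $N'\cap B^\omega$. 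The outcome is that the chosen abelian projection, the center it generates, and the partial isometries are all represented by asymptotically constant sequences, hence lie in $B$; assembling them gives $P \subseteq B$.

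The main obstacle is exactly this descent step, and within it the honest transfer between the full ultraproduct $B^\omega$ and the centralizer ultraproduct $(B^\omega)_{\varphi^\omega}$ — a difficulty that is invisible in the tracial setting of Ioana's original lemma but essential here because $B$ need not admit a trace. Once the constancy of the representing sequences is secured through Lemma \ref{AH's lemma} and \cite[Theorem 2.3]{HR14}, the homogeneous decomposition and the assembly in the first paragraph are entirely routine.
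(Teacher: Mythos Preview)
Your proposal is correct and takes essentially the same approach as the paper: the paper gives no self-contained argument but simply refers to \cite[Theorem 2.3]{HR14} together with Lemma \ref{AH's lemma}, which is exactly what you invoke at the crucial descent step. Your additional scaffolding (the reduction to $P\subseteq B$ and the type~I matrix-unit decomposition) is a reasonable sketch of the mechanics behind that citation, but the core content is the same reference.
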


The last lemma is due to Popa \cite[Proposition 7]{Oz03}. For the proof, see its generalization \cite[Theorem 3.1]{HU15} by Houdayer and Raum and use Lemma \ref{AH's lemma}.

\begin{lem}\label{Popa's lemma}
	The following conditions are equivalent.
\begin{enumerate}
	\item $N'\cap B^\omega$ is diffuse.
	\item There exists a decreasing sequence $\{A_{n}\}_{n\geq 0}$ of diffuse abelian von Neumann subalgebras in $B$ with expectation such that $N=\bigvee_n (A_n'\cap N)$.
\end{enumerate}
\end{lem}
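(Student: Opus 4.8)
The statement is an equivalence of two conditions, and the plan is to prove each implication after reducing everything to the finite von Neumann algebra $(B^\omega)_{\varphi^\omega}$ carrying the trace $\varphi^\omega$. A preliminary observation I would record first is that any diffuse abelian subalgebra $A\subseteq B$ admitting a $\varphi$-preserving conditional expectation is automatically contained in the centralizer $B_\varphi$: by Takesaki's theorem such an $A$ is globally invariant under the modular flow $\sigma^\varphi$, and since $A$ is abelian its own modular group is trivial, forcing $\sigma^\varphi_t|_A=\mathrm{id}$. In particular $\varphi|_A$ is a trace and every unitary of $A$ is $\varphi$-centralizing.

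For the implication $(2)\Rightarrow(1)$ I would argue directly. Given the decreasing sequence $\{A_n\}$, the remark above places each $A_n$ in $B_\varphi$, so I can choose a Haar unitary $u_n\in A_n$, i.e.\ $\varphi(u_n^k)=\delta_{k,0}$, which exists in any diffuse abelian algebra with a faithful normal state. Because the $u_n$ are $\varphi$-centralizing, the sequence $(u_n)_n$ defines a unitary $u=(u_n)^\omega\in(B^\omega)_{\varphi^\omega}$ with $\varphi^\omega(u^k)=\lim_\omega\varphi(u_n^k)=\delta_{k,0}$; hence $u$ is a Haar unitary and $\{u\}''$ is diffuse. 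Since $\{A_n\}$ decreases, the relative commutants $A_n'\cap N$ increase, and for $x\in A_m'\cap N$ one has $[u_n,x]=0$ for all $n\ge m$, so $[u,x]=0$ in $M^\omega$. As $\bigcup_m(A_m'\cap N)$ is weakly dense in $N$ and $u$ is bounded, $u$ commutes with all of $N$, giving a Haar unitary in $N'\cap(B^\omega)_{\varphi^\omega}\subseteq N'\cap B^\omega$; thus the latter is diffuse.

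For $(1)\Rightarrow(2)$ I would first pass to the centralizer and show that $P_0:=N'\cap(B^\omega)_{\varphi^\omega}$ is itself diffuse. This is the only point at which the type III nature really enters, and it is governed exactly by the Ando--Haagerup analysis of Ocneanu ultrapowers underlying Lemma \ref{AH's lemma} (and in packaged form by \cite[Theorem 3.1]{HU15}). Granting it, $P_0$ is a diffuse finite von Neumann algebra, and I fix inside it a decreasing chain of diffuse abelian subalgebras $Q^{(1)}\supseteq Q^{(2)}\supseteq\cdots$ with trivial intersection (e.g.\ the tail subalgebras $Q^{(n)}=\overline{\bigotimes}_{k\ge n}R_k$ of a tensor decomposition $Q\cong\overline{\bigotimes}_{k\ge1}R_k$ into diffuse abelian pieces). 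Fixing a $\|\cdot\|_\varphi$-dense sequence $\{x_m\}$ in the unit ball of $N$ and using that $Q^{(n)}\subseteq N'$, a diagonal selection over $\omega$ lifts this chain to a decreasing sequence $A_n\subseteq B_\varphi$ of diffuse abelian subalgebras (each automatically with $\varphi$-preserving expectation, being modular-invariant) such that every $x_m$ asymptotically commutes with $A_n$ as $n\to\infty$. A standard exhaustion argument then upgrades this to $\bigvee_n(A_n'\cap N)=N$, which is $(2)$.

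The genuinely delicate parts are the centralizer reduction ``$N'\cap B^\omega$ diffuse $\Rightarrow P_0$ diffuse'' and the compatible lifting of a decreasing chain of diffuse abelian algebras from $(B^\omega)_{\varphi^\omega}$ down to $B_\varphi$ while controlling commutation against a dense subset of $N$. In the tracial case both reduce to Ozawa--Popa's original argument \cite[Proposition 7]{Oz03}\cite{Oz04}; the content here is precisely carrying them out inside the centralizer and transferring the information back to $B^\omega$, for which I would invoke Lemma \ref{AH's lemma} together with \cite{AH12,HU15}.
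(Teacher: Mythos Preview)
Your sketch follows exactly the route the paper indicates: the paper gives no self-contained argument here but simply defers to \cite[Theorem 3.1]{HU15} together with Lemma \ref{AH's lemma}, and what you outline---pass to the finite centralizer $(B^\omega)_{\varphi^\omega}$ via Ando--Haagerup and then run Popa's decreasing-abelian-subalgebra construction---is precisely how those references proceed. Two small points worth tightening in your $(2)\Rightarrow(1)$ direction: you tacitly assume the expectations onto the $A_n$ are $\varphi$-preserving, which the hypothesis does not literally grant; and your last inference really only shows that $N'\cap (B^\omega)_{\varphi^\omega}$ is diffuse (a Haar unitary in a \emph{tracial} algebra forces diffuseness via the spectral measure), so one still needs the centralizer-to-ambient transfer to conclude the same for $N'\cap B^\omega$---but this is exactly what the references you already invoke supply.
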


Summarizing these lemmas, we get the following useful proposition.

\begin{prop}\label{conclusion-prop}
	Let $B\subset N\subset M$ be as above. Take the unique projection $z\in \mathcal Z(N'\cap B^\omega)$ such that $(N'\cap B^\omega )z$ is discrete and $(N'\cap B^\omega )z^\perp$ is diffuse. Then we have:
\begin{itemize}
	\item $z\in \mathcal Z(N'\cap B)\subset \mathcal Z(B)$ and $(N'\cap B^\omega) z=(N'\cap B) z$;

	\item there is a decreasing sequence $\{A_{n}\}_{n\geq 0}$ of diffuse abelian von Neumann subalgebras in $B z^\perp$ with expectation such that $Nz^\perp =\bigvee_n (A_n'\cap Nz^\perp)$.
\end{itemize}
\end{prop}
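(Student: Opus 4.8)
The plan is to assemble the three lemmas recalled above into the desired dichotomy. Write $P := N'\cap B^\omega$ and use the standard discrete/diffuse decomposition of a von Neumann algebra: there is a unique central projection $z\in\mathcal Z(P)$ with $Pz$ discrete and $Pz^\perp$ diffuse. This is the $z$ of the statement, and I will run Lemma \ref{Ioana's lemma} on the half $Pz$ and Lemma \ref{Popa's lemma} on the half $Pz^\perp$, after first recording a few structural facts about $z$.

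First I would pin down the location of the relevant relative commutants. Since $B\subset N$ we have $N'\subset B'$, hence $N'\cap B\subset B'\cap B=\mathcal Z(B)$; in particular $N'\cap B$ is abelian and $\mathcal Z(N'\cap B)=N'\cap B\subset\mathcal Z(B)$. Applying Lemma \ref{Ioana's lemma} with $e=z$ (legitimate because $Pz=(N'\cap B^\omega)z$ is discrete) yields $z\in\mathcal Z(N'\cap B)$ and $(N'\cap B^\omega)z=(N'\cap B)z$, which combined with the previous sentence gives the first bullet. I would also observe here that $z\in N'\cap B\subset N'$ while $z\in B\subset N$, so in fact $z\in N\cap N'=\mathcal Z(N)$; this centrality of $z$ in both $B$ and $N$ is precisely what legitimizes the corner taken in the next step.

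For the second bullet I would pass to the corner cut by $z^\perp$. Because $z^\perp$ is central in $B$, $N$ and $M$ (hence in $B^\omega$ and $M^\omega$), the inclusions $Bz^\perp\subset Nz^\perp\subset Mz^\perp$ again satisfy the running hypotheses, with $(Bz^\perp)^\omega=B^\omega z^\perp$. The identification $(Nz^\perp)'\cap(Bz^\perp)^\omega=(N'\cap B^\omega)z^\perp=Pz^\perp$ then holds: for $a\in B^\omega$, the element $az^\perp$ commutes with $Nz^\perp$ if and only if $az^\perp\in N'$, using that $z^\perp$ is central. Since $Pz^\perp$ is diffuse by the choice of $z$, Lemma \ref{Popa's lemma} applied to $Bz^\perp\subset Nz^\perp\subset Mz^\perp$ produces a decreasing sequence $\{A_n\}_{n\geq 0}$ of diffuse abelian subalgebras with expectation in $Bz^\perp$ satisfying $Nz^\perp=\bigvee_n(A_n'\cap Nz^\perp)$, which is exactly the second bullet.

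The argument is essentially bookkeeping, so there is no deep obstacle; the points requiring genuine care are the verification that $z$ is central in $N$ (not merely in $N'\cap B$), which is what permits passing to the corner $z^\perp$, and the cut-down identity $(Nz^\perp)'\cap(Bz^\perp)^\omega=Pz^\perp$. Both are short, but they must be checked so that Lemmas \ref{Ioana's lemma} and \ref{Popa's lemma} apply verbatim to the two halves of the decomposition.
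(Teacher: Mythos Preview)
Your approach is exactly what the paper intends: the proposition is stated there without proof, as an immediate combination of Lemmas \ref{Ioana's lemma} and \ref{Popa's lemma}, and your argument makes that combination explicit.

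One correction, however: you assert that $z^\perp$ is central in $M$, but this is neither proved nor true in general (in the main application $M=B\rtimes G$, and the $G$-action may well move $\mathcal Z(B)$ around). Fortunately this is not needed. Lemma \ref{Popa's lemma} involves only $B$, $N$, and $B^\omega$, so it suffices to apply it to the inclusion $Bz^\perp\subset Nz^\perp$, equipped with the restricted expectation $E_B|_{Nz^\perp}$ and the normalized state $\varphi(\,\cdot\,)/\varphi(z^\perp)$; for this, the centrality of $z^\perp$ in $N$ (which you did verify) is enough. Your cut-down identity $(Nz^\perp)'\cap (Bz^\perp)^\omega=(N'\cap B^\omega)z^\perp$ then goes through using only $z^\perp\in\mathcal Z(N)\cap\mathcal Z(B)$. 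Simply drop the reference to $Mz^\perp$ and the argument is clean.
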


\begin{proof}[Proof of Theorem \ref{solid-thm}]
	1. Observe first that for any projection $p\in \mathcal Z(B)$ and any diffuse von Neumann subalgebra $A\subset Bp$ with expectation, the relative commutant $A'\cap pMp$ is amenable (indeed $(A\oplus Bp^\perp)'\cap M = (A'\cap pMp)\oplus \mathcal Z(B)p^\perp$ is amenable). 

We see that for any projection $p\in \mathcal Z(B)$ and any intermediate von Neumann algebra $Bp\subset Q\subset pMp$ with expectation, if $Q$ has no amenable direct summand, then $Q'\cap (B^\omega p)$ is discrete. Indeed, if not, then we can apply Proposition \ref{conclusion-prop} to $Bp\subset Q\subset pMp$, and there is $p\neq z\in \mathcal Z(Q'\cap Bp)\subset \mathcal Z(B)$ and $\{A_n\}_{n\geq 0}$ such that $Qz^\perp = \bigvee_n A_n'\cap Qz^\perp$. Since $Qz^\perp$ is not amenable, $A_n'\cap Qz^\perp$ is not amenable for $n$ large enough. This implies that $A_n\subset Bz^\perp$ is diffuse while $A_n'\cap z^\perp Mz^\perp $ is not amenable. This contradicts the solidity of $\alpha$.

Let $B\subset N\subset M$ be as in the statement and take the unique projection $z_0\in \mathcal Z(N)\subset \mathcal Z(B)$ such that $Nz_0$ is amenable and $Nz_0^\perp$ has no amenable direct summand. We may assume $z_0^\perp \neq 0$. Put $p:=z_0^\perp$ and $Q:=Np$. By the observation in the previous paragraph, $Q'\cap (B^\omega p)$ is discrete, hence $Q'\cap (B^\omega p) = Q'\cap Bp = \mathcal Z(Q)$ by Proposition \ref{conclusion-prop}. Let $\{z_n\}_{n\geq 1}$ be a family of mutually orthogonal minimal projections in $\mathcal Z(Q)$ such that $z_0^\perp = \sum_{n\geq 1}z_n$. Then since $z_n$ is minimal in $Q'\cap (B^\omega p)$, the conclusion follows.

	2. We have only to prove the `if' direction. For this we can follow the proof of (3)$\Rightarrow$(2)$\Rightarrow$(1) in \cite[Proposition 6]{CI08}.
\end{proof}

\small{

}






\begin{thebibliography}{BKV19}

\bibitem[AH12]{AH12} H. Ando and U. Haagerup, \textit{Ultraproducts of von Neumann algebras.} J. Funct. Anal. {\bf 266} (2014), 6842--6913.

\bibitem[AIM19]{AIM19} Y. Arano, Y. Isono and A. Marrakchi, \textit{Ergodic theory of affine isometric actions on Hilbert spaces.} Preprint 2019. \texttt{arXiv:1911.04272}


\bibitem[BR97]{BR97} O. Bratteli and D. W. Robinson, \textit{Operator algebras and quantum statistical mechanics. 2. Equilibrium states. Models in quantum statistical mechanics. Second edition.} Texts and Monographs in Physics. Springer-Verlag, Berlin, 1997. xiv+519 pp.

\bibitem[BO08]{BO08} N. P. Brown and N. Ozawa, \textit{$C^{\ast}$-algebras and finite-dimensional approximations}. Graduate Studies in Mathematics, 88. American Mathematical Society, Providence, RI, 2008.

\bibitem[CI08]{CI08} I. Chifan and A. Ioana, \textit{Ergodic subequivalence relations induced by a Bernoulli action.} Geom. Funct. Anal. {\bf 20} (2010), no. 1, 53--67.

\bibitem[Co75]{Co75} A. Connes, \textit{Classification of injective factors.} Ann. of Math. (2) {\bf 104} (1976), no. 1, 73--115.

\bibitem[EK98]{EK98} D. Evans and Y. Kawahigashi, \textit{Quantum symmetries on operator algebras.} Oxford Mathematical Monographs. Oxford Science Publications. The Clarendon Press, Oxford University Press, New York, 1998. xvi+829 pp.

\bibitem[Ha81]{Ha81} T. Hamachi, \textit{On a Bernoulli shift with non-identical factor measures.} Ergodic Theory Dynam. Systems {\bf 1} (1981), 273--284.

\bibitem[Hi02]{Hi02} F. Hiai, \textit{$q$-deformed Araki-Woods algebras.} Operator algebras and mathematical physics (Constanţa, 2001), 169--202, Theta, Bucharest, 2003.

\bibitem[HR14]{HR14} C. Houdayer and S. Raum, \textit{Asymptotic structure of free Araki--Woods factors.} Math. Ann. {\bf 363} (2015), no. 1-2, 237--267.

\bibitem[HU15]{HU15} C. Houdayer and Y. Ueda, \textit{Rigidity of free product von Neumann algebras.} Compos. Math. {\bf 152} (2016), no. 12, 2461--2492.

\bibitem[HI15a]{HI15a} C. Houdayer and Y. Isono, \textit{Unique prime factorization and bicentralizer problem for a class of type III factors.} Adv. Math. {\bf 305 } (2017), 402--455.

\bibitem[HI15b]{HI15b} C. Houdayer and Y. Isono, \textit{Bi-exact groups, strongly ergodic actions and group measure space type III factors with no central sequence.} Comm. Math. Phys. {\bf 348} (2016), no. 3, 991--1015.

\bibitem[Io12]{Io12} A.~Ioana, \textit{Cartan subalgebras of amalgamated free product $\rm II_1$ factors} (with an appendix joint with S.~Vaes). Ann.\ Sci.\ \'{E}cole Norm.\ Sup. {\bf 48} (2015), 71--130.

\bibitem[Io17]{Io17} A. Ioana, \textit{Rigidity for von Neumann algebras.} Proceedings of the International Congress of Mathematicians—Rio de Janeiro 2018. Vol. III. Invited lectures, 1639--1672, World Sci. Publ., Hackensack, NJ, 2018.



\bibitem[Ka48]{Ka48} S. Kakutani, \textit{On equivalence of infinite product measures.} Ann. of Math. {\bf 49} (1948), 214--224.

\bibitem[Ko09]{Ko09} Z. Kosloff, \textit{On a type III$_1$ Bernoulli shift.} Ergodic Theory Dynam. Systems {\bf 31} (2011), 1727--1743.

\bibitem[Kr70]{Kr70} U. Krengel, \textit{Transformations without finite invariant measure have finite strong generators.} In Contributions to Ergodic Theory and Probability (Proc. Conf., Ohio State Univ., Columbus, Ohio, 1970), Springer, Berlin, pp. 133--157.

\bibitem[Ma16]{Ma16} A. Marrakchi, \textit{Solidity of type III Bernoulli crossed products.} Comm. Math. Phys. {\bf 350} (2017), no. 3, 897--916.


\bibitem[Oz03]{Oz03} N. Ozawa, \textit{Solid von Neumann algebras}. Acta Math. {\bf 192} (2004), 111--117.
\bibitem[Oz04]{Oz04} N. Ozawa, \textit{A Kurosh type theorem for type $\rm {II}_{1}$ factors}. Int. Math. Res. Not. (2006), Art. ID 97560, 21 pp.
\bibitem[Oz06]{Oz06} N. Ozawa, \textit{Amenable actions and applications.} International Congress of Mathematicians. Vol. II, 1563--1580, Eur. Math. Soc., Zürich, 2006.
\bibitem[Oz08]{Oz08} N. Ozawa, \textit{An example of a solid von Neumann algebra.} Hokkaido Math. J. {\bf 38 }(2009), no. 3, 557--561.




\bibitem[Po06]{Po06} S. Popa, \textit{Deformation and rigidity for group actions and von Neumann algebras.} In Proceedings of the International Congress of Mathematicians (Madrid, 2006), Vol. I, European Mathematical Society Publishing House, 2007, p. 445--477.



\bibitem[Ta03]{Ta03} M.~Takesaki, \textit{Theory of operator algebras. $\rm II$}. 
Encyclopaedia of Mathematical Sciences, 125. Operator Algebras and Non-commutative Geometry, 6. Springer-Verlag, Berlin, 2003. xxii+518 pp.

\bibitem[VW17]{VW17} S. Vaes and J. Wahl, \textit{Bernoulli actions of type $\rm III_1$ and $L^2$-cohomology.} Geom. Funct. Anal. {\bf 28} (2018), no. 2, 518--562.

\bibitem[Va10]{Va10} S. Vaes, \textit{Rigidity for von Neumann algebras and their invariants.} Proceedings of the International Congress of Mathematicians. Volume III, 1624--1650, Hindustan Book Agency, New Delhi, 2010.

\bibitem[BKV19]{BKV19} M. Bj$\rm \ddot{o}$rklund, Z. Kosloff and S. Vaes, \textit{Ergodicity and type of nonsingular Bernoulli actions.} 
Preprint 2019.  \texttt{arXiv:1901.05723}

\end{thebibliography}
\end{document}